\pgfplotsset{compat=1.15}
\newtheorem{theorem}{Theorem}[section]
\newtheorem{proposition}[theorem]{Proposition}
\newtheorem{corollary}[theorem]{Corollary}
\newtheorem{lemma}[theorem]{Lemma}
\newtheorem{remark}[theorem]{Remark}
\newcommand{\bigo}[1]{\mathcal{O}\left(#1\right)}
\newcommand{\class}{\mathscr{C}}
\newcommand{\im}{\text{Im}\,}
\newcommand{\NN}{\mathbb{N}}
\newcommand{\ZZ}{\mathbb{Z}}
\newcommand{\QQ}{\mathbb{Q}}
\newcommand{\RR}{\mathbb{R}}
\newcommand{\black}[1]{\color{black}}
\renewcommand{\d}{\partial}
\newcommand{\E}{\mathscr{E}}
\begin{document}

\title[Rigidity of symplectic billiards]{Deformational spectral rigidity of axially symmetric symplectic billiards}

\author{Corentin Fierobe}
\address{Department of Mathematics, University of Rome Tor Vergata, Via della ricerca scientifica 1, 00133 Rome, Italy}
\email{cpef@gmx.de}

\author{Alfonso Sorrentino}
\address{Department of Mathematics, University of Rome Tor Vergata, Via della ricerca scientifica 1, 00133 Rome, Italy}
\email{sorrentino@mat.uniroma2.it}

\author{Amir Vig}
\address{Department of Mathematics, University of Michigan, Ann Arbor, MI 48109}
\email{vig@umich.edu}

\maketitle


\begin{abstract}
    Symplectic billiards are discrete dynamical systems which were introduced by Albers and Tabachnikov and take place in a strictly convex bounded planar domain with smooth boundary. They are described by the \textit{symplectic law of reflection}, in constrast to the elastic reflection law of Birkhoff billiards. In this paper, we prove a version of dynamical spectral rigidity for symplectic billiards which is a counterpart to previous results on classical billiards by De Simoi, Kaloshin and Wei. Namely, we show that close to an ellipse, any sufficiently smooth one-parameter family of axially symmetric domains either contains domains with different area spectra or is trivial, in the sense that the domains differ by area-preserving affine transformations of the plane. We also prove that in general -- that is, even if the domains are not close to an ellipse -- any sufficiently smooth one-parameter family of axially symmetric domains which preserves the area-spectrum is tangent to a finite dimensional space.
\end{abstract}

\section{Introduction}

%

In this paper we consider \textit{symplectic billiards}, which were introduced by Albers and Tabachnikov \cite{AlbersTabachnikov}. These systems involve bounded, strictly convex, planar domains with smooth boundaries. The symplectic billiard map concatenates line segments according to the \textit{symplectic law of reflection}. This law can be described as follows (see \cite{AlbersTabachnikov} for more details): if a particle is emitted from a point $q_1$ on the boundary of the domain and strikes the boundary again at $q_2$, then following the line segment $q_1q_2$, it will bounce at $q_2$ and continue along a new trajectory until it hits the boundary at point $q_3$. The point $q_3$ is uniquely determined by the condition that the line $q_1q_3$ is parallel to the tangent line to the boundary at $q_2$ (see Figure \ref{figure:symplectic_bounce}).
\\
\\
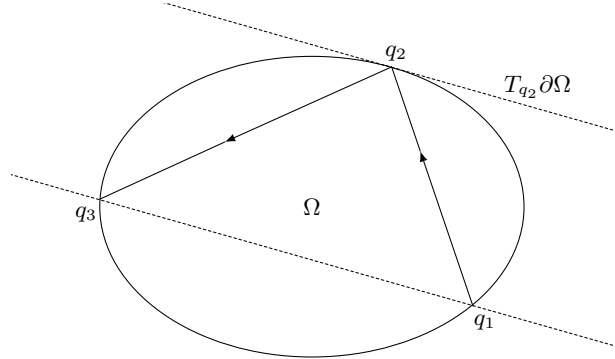
\begin{figure}[!ht]
\begin{tikzpicture}[line cap=round,line join=round,>=triangle 45,x=2.0cm,y=2.0cm]
\clip(-2,-1.2) rectangle (2,1.35);
\draw [rotate around={0:(0,0)}] (0,0) ellipse (2.82cm and 2cm);
\draw [dash pattern=on 1pt off 1pt,domain=-3.19:3.33] plot(\x,{(--32-8.43*\x)/29.69});
\draw [dash pattern=on 1pt off 1pt,domain=-3.19:3.33] plot(\x,{(-10.52-8.43*\x)/29.69});
\draw [-latex] (1.07,-0.66) -- (0.72,0.37);
\draw [-latex] (0.53,0.93) -- (-0.58,0.43);
\draw (-0.58,0.43)-- (-1.41,0.05);
\draw (0.72,0.37)-- (0.53,0.93);
\begin{scriptsize}
\draw[color=black] (1.15,-0.77) node {$q_1$};
\draw[color=black] (0.56,1.02) node {$q_2$};
\draw[color=black] (-1.5,-0.05) node {$q_3$};
\draw[color=black] (1.5,0.8) node {$T_{q_2}\partial\Omega$};
\draw[color=black] (0,0) node {$\Omega$};
\end{scriptsize}
\end{tikzpicture}
\label{figure:symplectic_bounce}
\caption{A symplectic bounce at $q_2$ in a domain $\Omega$. The line $q_1q_3$ and the tangent line to $\partial\Omega$ at $q_2$ are parallel.}
\end{figure}

The term \textit{symplectic billiard} is derived from the fact that the dynamics of such systems can be described by a variational principle related to standard symplectic (area) form. Let $\omega$ denote the standard symplectic form on the plane, and let $q_1, q_2, q_3$ be distinct points on the domain's boundary. A symplectic billiard bounce from $q_1$ to $q_2$ to $q_3$, as previously described, occurs if and only if $q_2$ is a critical point of the quantity
$$\omega(q_1,q_2)+\omega(q_2,q_3).$$
More precisely, the billiard map associated to symplectic billiards is an exact-symplectic twist-map of an annulus, whose generating function is given by $\omega$ \cite{AlbersTabachnikov}.\\

\subsection{Main results}
In this paper, we establish a rigidity result for symplectic billiards, described as follows. Corresponding to symplectic billiards in a domain $\Omega$, we define the area spectrum $\mathcal{A}(\Omega)$ to be the closure of the set of actions of its periodic trajectories. This area spectrum  is related to the set of areas enclosed by periodic trajectories. Due to the symplectic structure, the area spectrum remains invariant if we apply to the domain an affine transformations preserving the area \cite{AlbersTabachnikov}. The inverse problem asks whether or not it is possible to uniquely determine a domain $\Omega$, up to affine transformations, from its area spectrum. Specifically, we provide a partial answer to the following question:

\vspace{0.2cm}
\textbf{Question.} {\it Let $I$ be an open interval containing $0$ and $(\Omega_{\tau})_{\tau\in I}$ be a one-parameter family of domains such that  $\mathcal{A}(\Omega_{\tau})=\mathcal{A}(\Omega_{0})$ for any $\tau\in I$. \textit{Is it true that $\Omega_{\tau}=f_{\tau}(\Omega_0)$ where $f_{\tau}$ is an affine transformation of the plane?}}\\

\vspace{0.2cm}
A deformation $(\Omega_{\tau})_{\tau\in I}$ of $\Omega_0$ is said to be \textit{affine} if for any $\tau\in I$, there is an affine transformation $f_{\tau}:\RR^2\to\RR^2$ preserving the area such that $\Omega_{\tau}=f_{\tau}(\Omega_0)$. If $\mathcal D$ is a subset of all possible domains  and $\Omega\in \mathcal D$, we say that $\Omega$ is \textit{area spectrally rigid in $\mathcal{D}$} if the answer to previous question is positive for any sufficiently smooth family $(\Omega_{\tau})_{\tau\in I}$ with $\Omega_0=\Omega$ and $\Omega_{\tau}\in\mathcal D$ for $\tau\in I$. More precisely, we show the following result:

\begin{theorem}[Perturbative result]
\label{theorem:main1_simple}
Let $\mathcal S$ be the set of bounded axially symmetric strictly convex domain with $\mathscr C^7$-smooth boundary. Then any domain $\Omega\in\mathcal S$ which is sufficiently $\mathscr C^7$-close to an ellipse is area spectrally rigid in $\mathcal{S}$.
\end{theorem}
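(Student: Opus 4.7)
The strategy I would adopt follows the template of De Simoi, Kaloshin and Wei for Birkhoff billiards, adapted to the symplectic setting. The first step is to linearize the problem: given a smooth family $(\Omega_\tau)_{\tau\in I}$ in $\mathcal{S}$ with $\Omega_0=\Omega$ and $\mathcal{A}(\Omega_\tau)=\mathcal{A}(\Omega_0)$ for all $\tau$, I would parametrize $\partial\Omega_\tau$ as a normal graph over $\partial\Omega_0$ with height function $h(\cdot,\tau)$ and set $\nu=\d_\tau h(\cdot,0)$. Axial symmetry forces $\nu$ to be even in a suitable symmetric boundary parameter. The target is to show that $\nu$ lies in the finite-dimensional tangent space at $\Omega_0$ of the orbit of $\Omega_0$ under axial-symmetry-preserving area-preserving affine maps of $\R^2$; once this is known, a standard ODE argument in $\tau$ recovers $\Omega_\tau$ from this infinitesimal affine datum and concludes the proof.

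The second step is to extract integral constraints. Since the symplectic billiard has generating function $\omega(q_1,q_2)$, every periodic orbit $\gamma$ is a critical point of a sum of such terms, so by the envelope theorem the derivative of its action along the family equals the explicit variational term
$$0=\frac{d}{d\tau}\Big|_{\tau=0}\mathcal{A}_\gamma(\Omega_\tau)=\int_{\partial\Omega_0}\nu(s)\,d\mu_\gamma(s),$$
where $\mu_\gamma$ is a discrete measure supported on the bounce points of $\gamma$, with weights determined by the tangents to $\partial\Omega_0$ at those points. A priori these constraints are indexed by a dense set of $(p,q)$-periodic orbits.

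The third step is a Lazutkin-type asymptotic analysis of the orbits concentrating near $\partial\Omega_0$ as the period diverges. Since $\Omega_0$ is $\class^7$-close to an ellipse and the elliptic symplectic billiard is integrable, a KAM/Birkhoff normal form near the boundary yields a dense family of persistent whispering-gallery orbits. Restricting to those orbit families which are themselves invariant under the axial reflection, I expect the corresponding rescaled measures $\mu_{p,q}$ to equidistribute on $\partial\Omega_0$ against a Lazutkin-type density $\rho(x)\,dx$. Combining the equidistribution with the vanishing $\int\nu\,d\mu_{p,q}=0$, a Riemann-sum comparison in Lazutkin coordinates (in the spirit of De Simoi--Kaloshin--Wei) forces the even function $\nu\rho$ to be orthogonal to a dense subspace of even test functions modulo a finite-dimensional defect, and hence constrains $\nu$ to a finite-dimensional subspace that one verifies coincides precisely with the infinitesimal axial-symmetry-preserving area-preserving affine deformations.

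The principal obstacle, in my view, is this third step. The generating function in the symplectic setting is the area $\omega$ rather than the Euclidean length used in the Birkhoff case, so the near-boundary normal form, the Lazutkin density $\rho$, and the precise asymptotic structure of $\mu_{p,q}$ must all be recomputed from scratch. Moreover, one needs quantitative control of the remainder terms in the asymptotic expansions, uniform in the $\class^7$-distance of $\Omega_0$ from an ellipse, which is precisely where the ``sufficiently close'' hypothesis of Theorem~\ref{theorem:main1_simple} is consumed; outside this perturbative regime the completeness of the linearized constraints is not clear, which is consistent with the weaker (``tangent to a finite dimensional space'') conclusion announced in the abstract for the non-perturbative setting.
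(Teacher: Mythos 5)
Your overall architecture (linearize, extract X-ray--type constraints from periodic orbits, invert the linearized operator near the ellipse) is the same as the paper's, but two of the steps you treat as routine are precisely where the content lies, and as written they contain gaps. First, you assert $\frac{d}{d\tau}\mathcal{A}_\gamma(\Omega_\tau)=0$ directly from isospectrality via the envelope theorem. The envelope theorem only identifies the derivative with the explicit variational term; it does not make it vanish, and preserving the area spectrum as a \emph{set} does not by itself force each orbit's action to be constant in $\tau$. The paper closes this by showing, via Sard's theorem applied to $\mathscr A\circ\Theta_q$, that $\mathcal A(\Omega_0)$ is a closed measure-zero (hence totally disconnected) subset of $\RR$, so the continuous map $\tau\mapsto A_q(\tau,\underline t^{(q,\tau)})$ is constant; only then does maximality of the symmetric orbit give the vanishing of the first variation. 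Second, your source of periodic orbits is wrong in mechanism: KAM/normal-form arguments give persistent invariant curves, not periodic orbits of every rotation number $1/q$ (rational orbits do not persist under KAM). The paper instead constructs, for every $q\ge 3$, a symmetric $q$-gon of maximal area with one vertex fixed at a point $O$ on the symmetry axis --- a purely variational existence statement (Proposition \ref{proposition:symmetric_maximal_orbit}) requiring neither closeness to the ellipse nor KAM.

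At the crucial injectivity step, ``orthogonal to a dense subspace of test functions modulo a finite-dimensional defect'' is too vague to carry the argument: the constraints are, to leading order, the cyclic sums $[n]_q=\frac1q\sum_j n(j/q)$, and density of the constraints does not by itself kill $n$. The actual mechanism is that $[n]_q^{\ast}=\sum_{m>0}\widehat n_{mq}$ is inverted explicitly by M\"obius inversion, giving a bounded inverse of the elliptical operator $T_{\mathscr E}:H^{\gamma}\to h^{\gamma}$, after which one shows $\|T_{\Omega}-T_{\mathscr E}\|=\bigo{\varepsilon}$ in operator norm using the quantitative expansions of the nearly glancing orbits with remainders that vanish as $\Omega\to\mathscr E$ in $\mathscr C^7$ (Propositions \ref{proposition:perturbative_regime} and \ref{proposition:invertibility_ellipse_operator}); this is exactly where the ``sufficiently close'' hypothesis is consumed. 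Note finally that the paper normalizes the family by area-preserving affine maps at the outset (fixing $O$, $O'$ and the symmetry axis), so the kernel of the linearized operator is trivial and the conclusion is $n_\tau\equiv 0$, i.e.\ $\Omega_\tau=\Omega_0$ exactly; your plan of identifying a residual finite-dimensional defect with infinitesimal affine deformations a posteriori adds a step the normalization renders unnecessary.
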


We refer to Theorem \ref{theorem:main1_simple} as \textit{perturbative} because it applies to domains which are \textit{close} to ellipses in a suitable topology. For more general domains, we can still provide a condition on the one-parameter families $(\Omega_{\tau}){\tau\in I}$ in $\mathcal{S}$ for which the area spectrum $\mathcal A(\Omega{\tau})$ remains constant.

\begin{remark}
    The proof relies on the existence of symmetric periodic orbits of every rotation number of the type $1/q$ with $q\geq 3$. We believe that a more general statement can be proven by considering other rotation numbers.
\end{remark}

Assume that the deformation $\Omega_\tau$ is $\mathscr{C}^7$-smooth in $\tau$. Such a family can be described using a collection of $\mathscr{C}^6$-smooth maps, known as \textit{deformation maps}, denoted by $(n_{\tau})_{\tau}$. For each $\tau$, the map $n_{\tau}: \partial\Omega_{\tau} \to \RR$ assigns to each point $q_{\tau} \in \partial\Omega_{\tau}$ a value that approximately represents the area of the parallelogram formed by the unit tangent vector to $\partial\Omega_{\tau}$ at $q_{\tau}$ and the vector $\partial_{\tau}q_{\tau}$.

\begin{theorem}[Non perturbative result]
\label{theorem:main2_simple}
Let $(\Omega_{\tau})_{\tau\in I}$ be a one-parameter family of domains in $\mathcal{S}$ such that $\mathcal A(\Omega_{\tau})=\mathcal A(\Omega_{0})$ for all $\tau$. Then, there is a continuous family $(V_{\tau})_{\tau}$ of finite dimensional vector spaces $V_{\tau}$ of the space $\mathscr C^6(\partial\Omega_{\tau},\RR)$ such that $n_{\tau}\in V_{\tau}$. Moreover, $V_0$ is uniquely determined by $\Omega_0$.
\end{theorem}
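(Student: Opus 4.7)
The plan is to reduce the problem to a linear one by differentiating the area–spectrum constancy in $\tau$, and then to analyze the resulting family of linear constraints on $n_\tau$ using the symmetric short periodic orbits guaranteed by the hypothesis. Concretely, for each $q\geq 3$ the hypothesis produces (at least) one symmetric periodic orbit $O_{q,\tau}$ of rotation number $1/q$, with symplectic action $A_q(\tau)$. Since $\mathcal{A}(\Omega_\tau)$ is constant, $A_q(\tau)$ is constant for each $q$. Differentiating $A_q$ in $\tau$ and using the variational characterization of periodic orbits (the vertices are critical points of the generating function $\omega$), the contributions coming from the motion of the vertices along $\partial\Omega_\tau$ vanish, so that only the transverse displacement of the boundary survives. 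This collapses the derivative of $A_q$ to a finite linear combination
\[
  L_{q,\tau}(n_\tau)\;=\;\sum_{i=1}^{q} c_{q,i}(\tau)\, n_\tau(q_{q,i}(\tau))\;=\;0,
\]
where $q_{q,i}(\tau)$ are the vertices of $O_{q,\tau}$ and the weights $c_{q,i}(\tau)$ are explicit in terms of the symplectic reflection geometry at those vertices.

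Axial symmetry of $\Omega_\tau$ forces the deformation map to be even with respect to the axis of symmetry (the odd part would break the symmetry and is therefore not part of a family within $\mathcal{S}$), so the constraints are to be understood on $\mathscr{C}^6_{\text{even}}(\partial\Omega_\tau,\RR)$, and it suffices to consider the symmetric $q$-orbits. Define
\[
  V_\tau\;:=\;\bigcap_{q\geq 3}\ker L_{q,\tau}\;\subset\;\mathscr{C}^6(\partial\Omega_\tau,\RR).
\]
By construction $n_\tau\in V_\tau$, and $V_0$ depends only on $\Omega_0$. The heart of the argument is to show that $\dim V_\tau<\infty$. I would prove this by establishing an asymptotic expansion, as $q\to\infty$, for the functionals $L_{q,\tau}$ viewed as distributions on $\partial\Omega_\tau$. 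Symmetric orbits of rotation number $1/q$ concentrate near the boundary as $q\to\infty$, and a Marvizi–Melrose style analysis, adapted to the symplectic reflection law and the generating function $\omega$, yields an expansion of $L_{q,\tau}$ in powers of $1/q$ whose successive terms are integrals of $n_\tau$ against smooth densities built from the boundary and its derivatives up to high order. Taking suitable linear combinations of the $L_{q,\tau}$ and letting $q\to\infty$ recovers all but finitely many Fourier-like modes of the even part of $n_\tau$; this makes the common kernel $V_\tau$ finite-dimensional.

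Continuity of the family $(V_\tau)_\tau$ is then obtained by a standard transport argument. Fix a smooth identification $\phi_\tau:\partial\Omega_0\to\partial\Omega_\tau$ depending smoothly on $\tau$ (for instance via the exterior normal direction to $\partial\Omega_0$), pull back the functionals $L_{q,\tau}$ to fixed functionals on $\mathscr{C}^6(\partial\Omega_0,\RR)$, and note that they depend continuously on $\tau$ because the symmetric orbits $O_{q,\tau}$ and the weights $c_{q,i}(\tau)$ do. Once one has a locally uniform upper bound on $\dim V_\tau$ coming from the asymptotic step above, continuity of the intersection of kernels is a standard fact about continuous families of finite-codimension subspaces.

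The main obstacle is the asymptotic/completeness step: deriving a sufficiently precise expansion of $L_{q,\tau}$ as $q\to\infty$ in the symplectic setting. The elastic Birkhoff case is governed by Lazutkin coordinates and the Marvizi–Melrose expansion near the glancing regime, but the symplectic reflection law produces a different near-boundary normal form, so the analogous expansion must be re-derived from the structure of $\omega(q,q')$ for points $q,q'$ close on $\partial\Omega_\tau$. Once such an expansion is in place with enough terms, the finite-dimensionality of $V_\tau$ and hence the statement follow essentially by linear algebra; this is why Theorem~\ref{theorem:main2_simple} is a genuine rigidity statement but, in the non–perturbative regime, cannot by itself rule out the finite-dimensional residual space (which is precisely why Theorem~\ref{theorem:main1_simple} needs a separate perturbative argument near the ellipse).
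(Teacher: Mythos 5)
Your proposal is correct in outline and follows essentially the same route as the paper: differentiating the constant actions of the maximal symmetric $1/q$-orbits yields exactly the linear constraints $a_{\Omega_\tau,q}(n_\tau)=0$ of Lemma \ref{lemma:differential_of_area} and Proposition \ref{proposition:vanish_on_n}, and finite-dimensionality is obtained, as you propose, from the $q\to\infty$ asymptotics of these functionals in symplectic Lazutkin-type coordinates (Propositions \ref{proposition:estimates_symplectic_billiard_map} and \ref{proposition:perturbative_regime}), which make the high-Fourier-mode block of the constraint system invertible so that $n_\tau$ is determined by finitely many low modes. The expansion you defer is precisely what Appendix \ref{section:proof_estimates} supplies, and the only real difference is cosmetic: you take $V_\tau$ to be the intersection of all the kernels, while the paper realizes a (slightly larger) finite-dimensional space as the graph of the finite-rank operator $-(\tilde T^2_{\Omega_\tau})^{-1}\tilde T^1_{\Omega_\tau}$ over the span of the Fourier modes of order at most $q_0$.
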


\smallskip

\subsection{Background}
This work was inspired by a breakthrough by De Simoi, Kaloshin, and Wei \cite{DKW} in the context of classical Birkhoff billiards, namely, billiard models in which the reflection law at impact points is given by the optical law: the angle of incidence equals the angle of reflection. In that case, isospectrality is with respect to the \textit{lengths} or periodic orbit, as opposed to areas. One also studies the spectrum of the Laplacian with suitable (e.g., Dirichlet) boundary conditions on a billiard table. In 1967, Mark Kac asked the famous question \textit{``Can one hear the shape of a drum?''}, i.e., are Laplace isospectral sets (modulo Euclidean isometries) multiplicity free? While it has been shown that the general answer to this question is negative \cite{GordonWebbWolpert}, it remains unresolved for the class of strictly convex domains with smooth boundaries. Osgood, Phillips, and Sarnak \cite{OsgoodPhillipsSarnak1,OsgoodPhillipsSarnak2,OsgoodPhillipsSarnak3} demonstrated that Laplace-isospectral sets of planar domains are compact in the $\mathscr{C}^{\infty}$ topology. The third author proved an analogous result for the marked length spectrum \cite{Vig}. In the setting of one-parameter families of domains, Hezari and Zelditch \cite{HezariZelditch} provided a positive answer for Laplace isospectral analytic deformations of ellipses that preserve two axes of symmetry, as well as a strong condition for such deformations in the $\mathscr{C}^{\infty}$ setting. These results were further extended by Popov and Topalov \cite{PopovTopalov1}.

%

\subsection{Outline of the paper}
The paper is organized as follows. In Section \ref{section:def_statements} we introduce more precisely the symplectic billiard map as well as its area spectrum, and we state our main theorems in more accurate terms -- see Theorems \ref{theorem:main1_simple} and \ref{theorem:main2_simple}. Section \ref{section:construct_isospectral} is devoted to the construction of an operator which we  call \textit{linear isospectral operator} -- following the terminology in \cite{DKW}: the latter is associated to a specific domain and its injectivity is related to the rigidity of the corresponding domain. We prove our main Theorems \ref{theorem:main1_simple} and \ref{theorem:main2_simple} using the key estimates given by Proposition \ref{proposition:perturbative_regime}. Appendix \ref{section:affine_parametrization} recalls notions of affine parametrizations of planar curves. Appendix \ref{section:billiard_ellipse} presents explicit formulae for periodic orbits of the symplectic billiard in an ellipse. Appendix \ref{section:proof_estimates} gives asymptotic estimates for periodic orbits of the symplectic billiard in a domain sufficiently $\mathscr C^7$-close to an ellipse; these estimates generalize estimates which can be found in \cite{BaraccoBernardiNardi}. Appendix \ref{section:operators_appendix} states invertibility properties of operators acting on Sobolev spaces.

\subsection{Acknowledgements}
AS acknowledges the support of the Italian Ministry of University and Research’s PRIN 2022 grant “Stability in Hamiltonian dynamics and beyond”, as well as the Department of Excellence grant MatMod@TOV (2023-27) awarded to the Department of Mathematics of University of Rome Tor Vergata. AS is a member of the INdAM research group GNAMPA and the UMI group DinAmicI. Concurrently with this work, the authors of \cite{BaraccoBernardiNardi} arrived at a similar result using different techniques. We shared our ideas and mutually acknowledged that these results were obtained independently from one another.

\section{Definitions and statements}
\label{section:def_statements}

\subsection{Symplectic billiard map}
Let $\Omega$ be a bounded strictly convex planar domain and $\gamma: \RR/L \ZZ \longrightarrow \RR^2$ be a parametrization of  $\partial \Omega$, where $L>0$.
Given a  point $\gamma(t)$, denote by $\gamma(t^*)$  the other point on $\partial \Omega$ where the tangent line is parallel to that of $\gamma(t)$. 
According to \cite{AlbersTabachnikov}, the phase space of the symplectic billiard map in $\Omega$ is then the set of the oriented chords $\gamma(t_0) \gamma(t_1)$ where $t_0 < t_1 <t_0^*$, with respect to the orientation of $\gamma$; alternatively, the phase space can be described as the set 
$$ \mathcal X_\Omega:= \{(t_0, t_1): \; \omega(\dot \gamma(t_0), \dot \gamma (t_1)) >0\}$$ 
where $\omega$ denotes the standard area form in the plane ({\it i.e.}, determinant of the matrix made by the two vectors). The vertical foliation consists of all chords with a fixed initial point. The symplectic billiard map is given by:
\begin{eqnarray}\label{Bsymmap}
B_\Omega: \mathcal X_\Omega &\longrightarrow &  \mathcal X_\Omega \nonumber,\\
(t_0,t_1) &\longmapsto& (t_1,t_2),
\end{eqnarray}
where $(t_1,t_2)$ is uniquely determined by the condition that the tangent line to $\partial \Omega$ at $\gamma(t_1)$ is parallel to the line $\gamma(t_0)\gamma(t_2)$ (see Figure \ref{figure:symplectic_bounce}). The map $B_\Omega$ is an exact symplectic twist map and can be extended to the boundary of its phase space by continuity: $B_\Omega (t,t) := (t,t)$ and 
$B_\Omega (t,t^*) := (t^*,t)$.

\begin{remark}
\label{remark:affine_transformation_commute}
The symplectic billiard map $B_\Omega$ can also be defined on the set of points $(q_0,q_1)\in\partial\Omega\times\partial\Omega$. In this setting, it is known that it commutes with affine transformations: if $F$ is such a transformation $B_{F(\Omega)}(F(q_0,q_1))=F(B_{\Omega}(q_0,q_1))$ where $F(q_0,q_1)$ stands for $(F(q_0),F(q_1))$.
\end{remark}

\subsection{Generating function and area spectrum}
The symplectic billiard map is an exact-symplectic twist map and hence, is associated to a generating function $S_\Omega:\{(t_0, t_1): \; t_0<t_1<t_0^*\}\to\RR$ defined for all $(t_0, t_1)$ by
$$
S_\Omega(t_0, t_1) = \omega(\gamma(t_0),\gamma(t_1)).
$$
Here, $\omega = dx\wedge dy$ is the standard area form of $\RR^2(x,y)$. It has the property that given three distinct points $t_0<t_1<t_2$, $B_\Omega (t_0,t_1) = (t_1,t_2)$ if and only if 
\begin{equation}
    \label{equation:variational_property}
\frac{d}{dt_1}\left(S_{\Omega}(t_0,t_1)+S_{\Omega} (t_1,t_2)\right)=0.
\end{equation}
A sequence $\underline t = \{(t_k,t_{k+1})\}_k$ such that for any integer $k$ we have $B_\Omega (t_k,t_{k+1})=(t_{k+1},t_{k+2})$ and $(t_q,t_{q+1})=(t_0,t_1)$ is called a \textit{periodic orbit} of $B_\Omega$ of period $q>0$. We define its action by
$$A(\underline t) = \sum_{k=0}^{q-1}S_\Omega(t_k,t_{k+1}).$$
When the points $\gamma(t_0),\ldots,\gamma(t_{q-1})$ form a polygon (\textit{i.e.} without intersection of edges), then $A(\underline t)$ corresponds to twice its area. We consider the \textit{area spectrum} of $\Omega$, denoted by $\mathcal A(\Omega)$, which is defined to be the \textit{closure of the set}
$$\{A(\underline t)\;|\; \underline t\text{ is a periodic orbit of }\Omega\}.$$

\begin{remark}
Following Remark \ref{remark:affine_transformation_commute}, if $\Omega'$ is a domain obtained from $\Omega$ by applying an area preserving affine transformation, then each periodic orbit of $\Omega$ is transformed into a periodic orbit of $\Omega'$ with the same action, and necessarily the area spectrum of both domains coincide, namely $\mathcal A(\Omega')=\mathcal A(\Omega).$
\end{remark}

We say that a periodic orbit $\underline t$ has \textit{rotation number} $p/q\in\QQ$ if for any integer $k$, it satisfies $t_{k+q} = t_k+p$ when lifted to the universal cover (a strip). Given an integer $q\geq 2$, let $A_q$ be the maximal area of a $q$-periodic orbit having rotation number $1/q$ in $\Omega$. It was proven \cite{AlbersTabachnikov} that there exists a sequence $(a_n)_{n\geq 0}$ of real numbers such that for any $q\geq 2$
$$A_q = a_0+\frac{a_1}{q^2}+\frac{a_2}{q^4}+\ldots$$
where in particular $a_0$ is the area of $\Omega$,
$$a_1 = \frac{L^3}{12}\qquad\text{and}\qquad a_2 = -\frac{L^4}{240}\int_0^L k(t)dt,$$
where $k$ is the affine curvature of $\partial\Omega$ and $L$ its affine-perimeter. The latter are defined more precisely in Section \ref{section:affine_parametrization}.

\usetikzlibrary{decorations.markings}
\usetikzlibrary{decorations.pathreplacing}
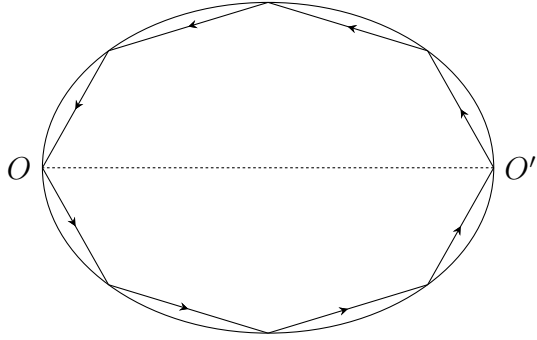
\begin{figure}
	\begin{tikzpicture}[xscale=0.75, yscale =1.1, decoration={
			markings,
			mark=at position 0.5 with {\arrow{>}}}
		] 
		\def\axisX{4} 
		\def\axisY{2} 
		
		\draw[-latex] (0,0) ellipse (\axisX cm and \axisY cm);
		
		\foreach \i in {1,...,8} {
			\coordinate (P\i) at ({\axisX * cos(45 * (\i - 1))}, {\axisY * sin(45 * (\i - 1))});
		}
		
		\draw[-, >=stealth, decoration={markings, mark=at position 0.5 with {\arrow{>}}}, postaction={decorate}] (P1) -- (P2);
		
		\draw[-, >=stealth, decoration={markings, mark=at position 0.5 with {\arrow{>}}}, postaction={decorate}] (P2) -- (P3);
		
		\draw[-, >=stealth, decoration={markings, mark=at position 0.5 with {\arrow{>}}}, postaction={decorate}] (P3) -- (P4);
		
		\draw[-, >=stealth, decoration={markings, mark=at position 0.5 with {\arrow{>}}}, postaction={decorate}] (P4) -- (P5);
		
		\draw[-, >=stealth, decoration={markings, mark=at position 0.5 with {\arrow{>}}}, postaction={decorate}] (P5) -- (P6);
		
		\draw[-, >=stealth, decoration={markings, mark=at position 0.5 with {\arrow{>}}}, postaction={decorate}] (P6) -- (P7);
		
		\draw[-, >=stealth, decoration={markings, mark=at position 0.5 with {\arrow{>}}}, postaction={decorate}] (P7) -- (P8);
		
		\draw[-, >=stealth, decoration={markings, mark=at position 0.5 with {\arrow{>}}}, postaction={decorate}] (P8) -- (P1);
		
		\draw[dash pattern=on 1pt off 1pt] (P1) -- (P5);

        \node[right] at (P1) {$O'$};  
        
        \node[left] at (P5) {$O$}; 
        
	\end{tikzpicture}
	\label{AxiallySymmetricOctagon}
	\caption{An axially symmetric symplectic billiard orbit of rotation number $\omega = 1/8$.}
\end{figure}

\section{Construction of an isospectral operator}
\label{section:construct_isospectral}

Let $(\Omega_{\tau})_{\tau\in I}$ be an axially symmetric $\class^7$-smooth deformation of $\Omega=\Omega_0$. By applying affine transformations to each domain in this family, we may assume that there are two distinct points $O$ and $O'$ on the plane such that for each $\tau$, the boundary $\partial\Omega_{\tau}$ contains both $O$ and $O'$ and the line $OO'$ is an axis of symmetry of $\Omega_{\tau}$ (Figure \ref{AxiallySymmetricOctagon}). Hence, we consider a $\mathscr C^7$-smooth map $\gamma:I\times\RR\to\RR^2$ such that for any $\tau\in I$, $\gamma(\tau,\cdot)$ parametrizes $\partial{\Omega_{\tau}}$ with $\gamma(\tau,0)=O$ and $\gamma(\tau,-t)=\gamma(\tau,t)$. Throughout the paper, we will assume that for each $\tau\in I$, the map $\gamma(\tau,\cdot)$ is an affine parametrization of $\partial\Omega_{\tau}$; i.e., $\det(\partial_t\gamma(\tau,t),\partial_t^2\gamma(\tau,t))=1$ for any $\tau\in I$ and $t\in \RR$  -- see Appendix \ref{section:affine_parametrization} for a discussion of affine parametrizations. Moreover if $L_{\tau}$ is the affine perimeter of $\partial\Omega_{\tau}$, we demand that $\gamma(\tau,0)=O$ and $\gamma(\tau,L_{\tau}/2)=O'$. 

\subsection{Deformation map} 
\label{subsection:deformation_area}

Given a family of parameters $\underline t = (t_0,\ldots,t_{q-1})$ and $\tau\in I$, we define the action
$$A_q(\tau,\underline t) = \sum_{k=0}^{q-1}\omega(\gamma(\tau,t_k),\gamma(\tau,t_{k+1})).$$
We also introduce the \textit{deformation map} of the family $(\Omega_{\tau})_{\tau\in I}$ as the quantity $n_{\tau}(s)$ defined for any $\tau\in I$ and $s\in\RR$ by
$$
n_{\tau}(s) = \omega(\partial_{\tau}\gamma(\tau,s),\partial_{s}\gamma(\tau,s)).
$$
It satisfies the following property:

\begin{lemma}
\label{lemma:differential_of_area}
Let $\tau_0\in I$ and assume that $(\gamma(\tau_0,t_k))_k$ corresponds to a $q$-periodic  symplectic billiard orbit in $\Omega_{\tau_0}$. Then
$$
\partial_{\tau} A_q(\tau_0,\underline t) = \sum_{k=0}^{q-1}n_{\tau_0}(t_k)\varrho_{\tau_0}^{-1/3}(t_k)L_k,
$$
where $\varrho_{\tau}(t)$ is the radius of curvature of $\partial\Omega_{\tau}$ at $\gamma(\tau,t)$, $L_k$ is the length of the segment between the two points $\gamma(\tau_0,t_{k-1})$ and $\gamma(\tau_0,t_{k+1})$, and $n_{\tau}(s)$ is the \textit{deformation map} of the family $(\Omega_{\tau})_{\tau\in I}$.
\end{lemma}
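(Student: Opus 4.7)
The plan is to differentiate $A_q$ in $\tau$, use bilinearity of $\omega$ together with a reindexing trick, and then invoke the symplectic law of reflection plus the normalization of the affine parametrization.

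First, since $\omega$ is bilinear,
\begin{align*}
\partial_\tau A_q(\tau_0,\underline t) &= \sum_{k=0}^{q-1}\bigl[\omega(\partial_\tau\gamma(\tau_0,t_k),\gamma(\tau_0,t_{k+1})) + \omega(\gamma(\tau_0,t_k),\partial_\tau\gamma(\tau_0,t_{k+1}))\bigr].
\end{align*}
Using $q$-periodicity in $k$ to shift the second sum by $-1$, and the antisymmetry $\omega(u,v)=-\omega(v,u)$, this collapses to
\[
\partial_\tau A_q(\tau_0,\underline t) = \sum_{k=0}^{q-1}\omega\bigl(\partial_\tau\gamma(\tau_0,t_k),\;\gamma(\tau_0,t_{k+1})-\gamma(\tau_0,t_{k-1})\bigr).
\]
This is the standard manipulation that identifies $\partial_\tau A_q$ as a sum of local contributions at each orbit point.

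Next I would use the defining property of the symplectic reflection law at $\gamma(\tau_0,t_k)$: the chord $\gamma(\tau_0,t_{k-1})\gamma(\tau_0,t_{k+1})$ is parallel to the tangent line $T_{\gamma(\tau_0,t_k)}\partial\Omega_{\tau_0}$, i.e.\ parallel to $\partial_t\gamma(\tau_0,t_k)$. Hence there exists $\lambda_k\in\RR$ such that
\[
\gamma(\tau_0,t_{k+1})-\gamma(\tau_0,t_{k-1}) = \lambda_k\,\partial_t\gamma(\tau_0,t_k),
\]
and taking Euclidean norms gives $|\lambda_k|=L_k/|\partial_t\gamma(\tau_0,t_k)|$ (with the correct sign coming from the consistent orientation of the orbit).

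To identify $|\partial_t\gamma(\tau_0,t_k)|$, I would invoke the affine parametrization condition $\det(\partial_t\gamma,\partial_t^2\gamma)=1$. Combined with the planar curvature formula $\kappa = \det(\partial_t\gamma,\partial_t^2\gamma)/|\partial_t\gamma|^3$, this yields $\kappa = 1/|\partial_t\gamma|^3$, so that $|\partial_t\gamma(\tau_0,t_k)| = \varrho_{\tau_0}(t_k)^{1/3}$ and therefore $\lambda_k = L_k\,\varrho_{\tau_0}(t_k)^{-1/3}$ (see Appendix \ref{section:affine_parametrization}). Substituting back and using the definition $n_{\tau_0}(t_k)=\omega(\partial_\tau\gamma(\tau_0,t_k),\partial_t\gamma(\tau_0,t_k))$ gives
\[
\omega\bigl(\partial_\tau\gamma(\tau_0,t_k),\;\gamma(\tau_0,t_{k+1})-\gamma(\tau_0,t_{k-1})\bigr) = L_k\,\varrho_{\tau_0}(t_k)^{-1/3}\,n_{\tau_0}(t_k),
\]
which summed over $k$ is precisely the claimed identity.

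There is no real obstacle; the only thing to watch is the bookkeeping of signs (orientation of the orbit, orientation of the affine parametrization) so that $\lambda_k$ comes out positive and the factor $\varrho^{-1/3}$ appears with the correct power. Everything else is a short computation using the variational characterization \eqref{equation:variational_property} implicitly through the parallelism condition at each bounce, and the standard identity $|\partial_t\gamma|=\varrho^{1/3}$ for affine parametrizations.
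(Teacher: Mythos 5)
Your proposal is correct and follows essentially the same route as the paper's proof: bilinearity of $\omega$, reindexing the sum to isolate $\gamma(\tau_0,t_{k+1})-\gamma(\tau_0,t_{k-1})$, the parallelism from the symplectic reflection law, and the identity $\|\partial_t\gamma\|=\varrho^{1/3}$ for affine parametrizations. The only cosmetic difference is that you derive $\|\partial_t\gamma\|=\varrho^{1/3}$ from the curvature formula directly, whereas the paper cites it from Proposition \ref{proposition:derivatives_gamma}.
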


\begin{proof}
Using bilinearity of the action, we see that
$$\partial_{\tau} A_q(\tau_0,\underline t) = \sum_k \omega(\partial_{\tau}\gamma(\tau_0,t_k),\gamma(\tau_0,t_{k+1}))+\omega(\gamma(\tau_0,t_k),\partial_{\tau}\gamma(\tau_0,t_{k+1})).
$$
Changing index summation in the second term, we obtain
$$\partial_{\tau} A_q(\tau_0,\underline t) = 
\sum_k \omega(\partial_{\tau}\gamma(\tau_0,t_k),\gamma(\tau_0,t_{k+1})-\gamma(\tau_0,t_{k-1})).$$
Now, by definition of the reflection law,
$$
\gamma(\tau_0,t_{k+1})-\gamma(\tau_0,t_{k-1}) = \alpha_k \partial_s\gamma(\tau_0,t_{k}),
$$
where $\alpha_k>0$. Thus, we conclude that 
$$\alpha_k = \|\gamma(\tau_0,t_{k+1})-\gamma(\tau_0,t_{k-1})\|\|\partial_s\gamma(\tau_0,t_{k})\|^{-1} = \varrho_{\tau_0}^{-1/3}(t_k)L_k$$
since $\|\partial_s\gamma(\tau,t_{k})\| = \varrho_{\tau}^{1/3}(t_k)$.
\end{proof}

\begin{proposition}
\label{proposition:zero_deformation_map}
Assume that the deformation map $n_{\tau}(s)$ vanishes for any $(\tau,s)$. Then $\Omega_{\tau}=\Omega_0$ for any $\tau$.
\end{proposition}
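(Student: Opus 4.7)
The plan is to exploit the geometric meaning of $n_\tau(s)=0$ and reduce the statement to a reparametrization argument.

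First, I would unpack the hypothesis. By definition, $n_\tau(s)=\omega(\partial_\tau\gamma(\tau,s),\partial_s\gamma(\tau,s))=\det(\partial_\tau\gamma,\partial_s\gamma)$. Vanishing identically means that at every $(\tau,s)$ the vectors $\partial_\tau\gamma(\tau,s)$ and $\partial_s\gamma(\tau,s)$ are linearly dependent. Since $\gamma(\tau,\cdot)$ is an affine parametrization of $\partial\Omega_\tau$, we have $\det(\partial_s\gamma,\partial_s^2\gamma)=1$, so in particular $\partial_s\gamma$ is nowhere zero. Consequently there exists a $\class^5$ function $\lambda:I\times\RR\to\RR$ such that
\[
\partial_\tau\gamma(\tau,s)=\lambda(\tau,s)\,\partial_s\gamma(\tau,s)\qquad\text{for all }(\tau,s)\in I\times\RR.
\]
Geometrically, the deformation moves each boundary point only in the tangential direction, so intuitively no new points are produced and no points are lost.

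Next, I would make this rigorous by following the characteristic curves of the ODE
\[
\frac{ds}{d\tau}=-\lambda(\tau,s),\qquad s(0)=s_0.
\]
Since $\gamma$, and hence $\lambda$, is $L_\tau$-periodic in $s$ with $L_\tau$ continuous in $\tau$, the function $\lambda$ is bounded on any compact subinterval of $I$, so Cauchy--Lipschitz guarantees that the solution $\tau\mapsto s(\tau;s_0)$ exists on all of $I$. Along such a solution,
\[
\frac{d}{d\tau}\gamma\bigl(\tau,s(\tau;s_0)\bigr)=\partial_\tau\gamma+\dot s\,\partial_s\gamma=\lambda\,\partial_s\gamma-\lambda\,\partial_s\gamma=0,
\]
hence $\gamma(\tau,s(\tau;s_0))=\gamma(0,s_0)$ for every $\tau\in I$. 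This shows that every point of $\partial\Omega_0$ lies on $\partial\Omega_\tau$, and running the flow backwards gives the reverse inclusion. Therefore $\partial\Omega_\tau=\partial\Omega_0$ as subsets of $\RR^2$, and since both are the boundaries of bounded convex domains, $\Omega_\tau=\Omega_0$.

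The only real subtleties are bookkeeping ones: confirming that $\lambda$ inherits enough regularity from $\gamma$ (straightforward from smoothness of $\gamma$ and nonvanishing of $\partial_s\gamma$) and ensuring the characteristics are defined on all of $I$ (which follows from the boundedness of $\lambda$ on compact sets, consistent with the fact that the boundary conditions $\gamma(\tau,0)=O$ and $\gamma(\tau,L_\tau/2)=O'$ force $\lambda(\tau,0)=\lambda(\tau,L_\tau/2)=0$, so the fixed points $s=0$ and $s=L_\tau/2$ trap the flow inside a fundamental domain). Neither presents a serious obstacle, so the proposition reduces cleanly to an elementary ODE argument showing that a purely tangential infinitesimal deformation is merely a reparametrization of the boundary.
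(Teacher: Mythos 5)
Your argument is correct, and it takes a genuinely different (more explicit) route than the paper. The paper's proof is a two-line appeal to the constant rank theorem, following \cite{DKW}: since $n\equiv 0$ and $\partial_s\gamma\neq 0$ (a consequence of the affine parametrization, as you also note), the differential $D\gamma$ has constant rank one, so the image of $\gamma$ is locally a one-dimensional submanifold and, by connectedness, is contained in $\partial\Omega_0$. You instead integrate the tangential vector field along characteristics: solving $\dot s=-\lambda(\tau,s)$ and observing that $\gamma(\tau,s(\tau;s_0))$ is constant in $\tau$ shows directly that each point of $\partial\Omega_0$ stays on every $\partial\Omega_\tau$, and the backward flow gives the reverse inclusion. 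What your approach buys is self-containedness (only Peano/Cauchy--Lipschitz existence is needed, and $\lambda=\scalar{\partial_\tau\gamma}{\partial_s\gamma}/\|\partial_s\gamma\|^2$ is plainly $\class^5$) plus the explicit reparametrization realizing the deformation as a family of diffeomorphisms of the fixed curve; what the paper's version buys is brevity. One small inaccuracy in your closing aside: the condition $\gamma(\tau,L_\tau/2)=O'$ gives $\partial_\tau\gamma(\tau,L_\tau/2)+\tfrac{1}{2}L_\tau'\,\partial_s\gamma(\tau,L_\tau/2)=0$, hence $\lambda(\tau,L_\tau/2)=-\tfrac{1}{2}L_\tau'$, which vanishes only once one knows $L_\tau$ is constant (which is indeed a consequence of the conclusion, but not an a priori input). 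This remark is not used in the main argument, so the proof stands.
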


\begin{proof}
This is classical and follows immediately from the paper \cite{DKW}. The fact that $n\equiv0$ implies that for any $(s,\tau)$ the vectors $\partial_{\tau}\gamma(\tau,s)$ and $\partial_{s}\gamma(\tau,s)$ are collinear. In other words, $D\gamma(s,\tau)$ has rank one. By the constant rank theorem, $\im\gamma$ is contained in $\partial\Omega_0$, whence $\partial \Omega_0=\partial\Omega_{\tau}$ for any $\tau$.
\end{proof}

\subsection{A family of nearly glancing orbits} Let $\Omega$ be a strictly convex axially symmetric domain with $\class^1$-smooth boundary and $q\geq 2$ an integer. Assume that one of the two points of $\partial\Omega$ on the symmetry axis denoted by $O$ is marked and call it the origin. Consider the set $\mathcal P^{(q)}$ of all convex $q$-gons
$$
\underline p^{(q)}=(p_0^{(q)},\ldots,p_{q-1}^{(q)})
$$
inscribed in $\Omega$ which are symmetric with respect to the axis of symmetry, with $p_0^{(q)}$ fixed at the origin.

\begin{proposition}
\label{proposition:symmetric_maximal_orbit}
Let $\underline p^{(\Omega,q)}$ be the polygon maximizing the area of all polygons in $\mathcal P^{(q)}$. Then $\underline p^{(\Omega,q)}$ is a $q$-periodic orbit of the symplectic billiard in $\Omega$.
\end{proposition}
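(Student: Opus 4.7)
The plan is to exploit the variational characterization \eqref{equation:variational_property} of symplectic billiard orbits: a cyclic sequence $(\gamma(t_0),\ldots,\gamma(t_{q-1}))$ is a $q$-periodic orbit of $B_\Omega$ if and only if for each $k$ the chord $p_{k-1}p_{k+1}$ is parallel to $T_{p_k}\partial\Omega$. Since the action $A(\underline{t})=\sum_k \omega(\gamma(t_k),\gamma(t_{k+1}))$ equals twice the signed area of the inscribed polygon, I will show that the area-maximizer $\underline{p}^{(\Omega,q)}\in\mathcal{P}^{(q)}$ is a critical point of $A$ restricted to the symmetric configurations, and that the resulting critical-point conditions imply the symplectic reflection law at every vertex.

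First I would dispose of existence and non-degeneracy. The set $\mathcal{P}^{(q)}$ is a closed subset of $(\partial\Omega)^{q-1}$ (after imposing the symmetry relation) and hence compact, and the area functional is continuous, so a maximizer exists. Strict convexity of $\Omega$, together with the fact that the supremum is strictly positive, rules out degenerate maximizers, so $\underline{p}^{(\Omega,q)}$ has $q$ distinct vertices listed in the natural cyclic order along $\partial\Omega$.

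Next I would compute the first variation. Parametrize by the affine coordinates $t_1,\ldots,t_m$ of the vertices on one side of the axis, with $m=\lfloor(q-1)/2\rfloor$, $t_0=0$, and (when $q$ is even) $t_{q/2}=L/2$; the symmetry constraint encodes as $p_{q-k}=Rp_k$, where $R$ is reflection across the axis. A direct reindexing together with bilinearity of $\omega$ gives
$$\delta A = \sum_k \omega(\delta p_k,\, p_{k+1}-p_{k-1}).$$
A symmetric perturbation at the free index $k$ has $\delta p_{q-k}=R\delta p_k$, and using $\omega(Ru,Rv)=-\omega(u,v)$ together with $p_{q-k\pm 1}=Rp_{k\mp 1}$ one checks that the $k$- and $(q-k)$-contributions coincide, so that $\delta A = 2\,\omega(\delta p_k,\,p_{k+1}-p_{k-1})$. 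At the maximizer this must vanish for every tangent direction $\delta p_k$, forcing $p_{k+1}-p_{k-1}$ to be collinear with $\partial_t\gamma(t_k)$; this is precisely the symplectic reflection law at $p_k$, and by symmetry also at $p_{q-k}$.

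The main subtlety -- and the step I expect to be the real obstacle -- concerns the on-axis vertices $p_0=O$ (and $p_{q/2}=O'$ when $q$ is even), which are fixed in $\mathcal{P}^{(q)}$ and therefore yield no Euler--Lagrange equation. I would handle these by invoking axial symmetry of $\partial\Omega$ directly: at any boundary point lying on the axis, the tangent line must be orthogonal to the axis, since otherwise its $R$-image would produce a second tangent direction at the same point. On the other hand, the neighbouring vertices $p_1$ and $p_{q-1}$ (respectively $p_{q/2-1}$ and $p_{q/2+1}$) are interchanged by $R$, so the chord joining them is orthogonal to the axis as well. Hence this chord is parallel to the tangent at the axis vertex, which is exactly the symplectic reflection law there. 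Combined with the off-axis computation, this shows that $\underline{p}^{(\Omega,q)}$ satisfies the reflection law at every vertex and is therefore a $q$-periodic orbit of $B_\Omega$.
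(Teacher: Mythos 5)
Your proof is correct and follows essentially the same route as the paper's: the area maximizer is a critical point of the action functional, criticality yields the symplectic reflection law at the free vertices, and the axial symmetry supplies the reflection condition at the fixed on-axis vertices. Your version is in fact somewhat more careful than the paper's, since by working with symmetric perturbations and checking that the $k$- and $(q-k)$-contributions to $\delta A$ coincide you justify why criticality within the symmetric family suffices, and you spell out the tangent/chord orthogonality argument at $O$ (and $O'$) that the paper compresses into ``a consequence of the symmetry assumption.''
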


\begin{proof}
    Let $\underline{t} = (t_1\ldots,t_{q-1})$ be such that $p_j^{(q)}=\gamma(t_j)$ is the $j$-th point of the polygon $\underline p^{(\Omega,q)}$ for any $j\in\{0,\ldots,q-1\}$. Because $\underline p^{(\Omega,q)}$ is maximizing, we obtain that $\underline{t}$ is a critical point of the functional
    \begin{equation}
        \label{equation:functional_max}
    \underline{t}\mapsto \sum_{j=0}^{q-1}S_{\Omega}(t_j,t_{j+1})
    \end{equation}
    where $t_0=t_q=0$. For $j\in\{1,\ldots,q-1\}$, differentiating \eqref{equation:functional_max} with respect to $t_j$ induces that $(p_{j-1},p_{j})$ is sent to $(p_{j},p_{j+1})$ by the symplectic billiard map. It is also true for $j=0$ as a consequence of the symmetry assumption.
\end{proof}

Given a map $n:\RR\to\RR$ and an integer $q\geq2$, we define its \textit{discrete $X$-ray transform} along the orbit $\underline t^{(q)}$ to be the quantity $a_{\Omega,q}(n)\in\RR$, where
\begin{equation}
\label{equation:a_q}
a_{\Omega,q}(n) = \sum_{k=0}^{q-1}n\left(t_k^{(q)}\right)\varrho^{-1/3}(t_k^{(q)})L_{k}^{(q)}
\end{equation}
where $t_k^{(q)}$ is the affine arclength coordinate of $p_k^{(q)}$ on $\partial\Omega$, $\varrho(t)$ is the radius of curvature of $\partial\Omega$ at a point of affine arclength $t$ and $L_{k}^{(q)}$ is the Euclidean distance between $p_{k-1}^{(q)}$ and $p_{k+1}^{(q)}$.

\begin{remark}
Note that in the case when $q=2$, we have the trivial identity $a_{\Omega,2}(n)=0$ for any map $n$, as $L_0^{(2)}=L_1^{(2)}=0$. This follows from the fact that $2$-periodic symplectic orbits have zero action.
\end{remark}

\begin{proposition}
\label{proposition:vanish_on_n}
Assume that $\mathcal A(\Omega_{\tau})=\mathcal A(\Omega_{0})$ for any $\tau\in I$. Then for any $\tau \in I$ and any $q\geq 2$,
$$
a_{\Omega_{\tau},q}\left(n_{\tau}\right) = 0,
$$
where $n_{\tau}$ is the deformation area. Moreover, the affine perimeter $L_{\tau}$ of $\partial\Omega_{\tau}$ is independent of $\tau$. I.e., for any $\tau \in I$,
$$
L_{\tau} = L_{0}.
$$
\end{proposition}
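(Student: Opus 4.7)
The plan is to use Proposition \ref{proposition:symmetric_maximal_orbit} to exhibit a smooth one-parameter family $\underline t^{(q)}_\tau$ of symmetric maximal $q$-periodic orbits and to track the value of their action
$$f_q(\tau):=A_q(\tau,\underline t^{(q)}_\tau),\qquad q\geq 3.$$
First I would argue that $f_q$ is $\mathscr C^1$ in $\tau$: since $\underline t^{(q)}_\tau$ is a strict area maximizer on the compact set $\mathcal P^{(q)}$, the Hessian of $A_q(\tau,\cdot)$ at $\underline t^{(q)}_\tau$ is negative definite on the subspace of configurations compatible with the symmetry, and the implicit function theorem applied to the critical-point equation \eqref{equation:variational_property} yields a smooth dependence of $\underline t^{(q)}_\tau$ on $\tau$.

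The core step is the claim that $f_q(\tau)=f_q(0)$ for every $\tau\in I$ and every $q\geq 3$. Since $f_q(\tau)$ is the action of a periodic orbit of $\Omega_\tau$, it lies in $\mathcal A(\Omega_\tau)=\mathcal A(\Omega_0)$. To upgrade this set membership to the equality $f_q(\tau)=f_q(0)$, I would combine the continuity of $\tau\mapsto f_q(\tau)$ with the asymptotic expansion
$$f_q(\tau)=\mathrm{area}(\Omega_\tau)+\frac{L_\tau^3}{12\,q^2}+\bigo{q^{-4}},$$
recalled in Section \ref{section:def_statements}. For $q$ large this expansion shows that the values $f_q(0)$ are strictly separated from one another and from the accumulation value $\mathrm{area}(\Omega_0)$, so that $f_q(0)$ is an isolated point of $\mathcal A(\Omega_0)$ in a quantitative neighborhood. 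Since $\mathcal A(\Omega_0)$ is closed and $\tau\mapsto f_q(\tau)$ is continuous, this forces $f_q(\tau)\equiv f_q(0)$ near $\tau=0$; connectedness of $I$ then propagates the equality to all of $I$. For the finitely many small values $q\in\{3,\dots,q_0-1\}$, a continuation argument anchored on continuous dependence of $\underline t^{(q)}_\tau$ on $\tau$ yields the same conclusion.

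Once $f_q$ is known to be constant in $\tau$, the desired identity drops out by differentiation. Since $\underline t^{(q)}_\tau$ is a critical point of $\underline t\mapsto A_q(\tau,\underline t)$, the chain rule reduces the total derivative to the partial $\tau$-derivative, and Lemma \ref{lemma:differential_of_area} together with the definition \eqref{equation:a_q} gives
$$0=\frac{d}{d\tau}f_q(\tau)=\partial_\tau A_q(\tau,\underline t^{(q)}_\tau)=\sum_{k=0}^{q-1} n_\tau\bigl(t_k^{(q)}\bigr)\,\varrho_\tau^{-1/3}\bigl(t_k^{(q)}\bigr)\,L_k^{(q)}=a_{\Omega_\tau,q}(n_\tau),$$
while for $q=2$ the identity is trivial as observed in the remark preceding the proposition. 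Finally, to recover the affine perimeter, I read off the two leading terms of the expansion: constancy of $f_q(\tau)$ implies $\mathrm{area}(\Omega_\tau)=\lim_{q\to\infty}f_q(\tau)$ is independent of $\tau$, and then $L_\tau^3/12=\lim_{q\to\infty}q^2\bigl(f_q(\tau)-\mathrm{area}(\Omega_\tau)\bigr)$ is also independent of $\tau$, so $L_\tau=L_0$.

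The main obstacle I anticipate lies in the isolation argument of the middle step. The expansion gives separation of $f_q(0)$ from $f_{q'}(0)$ for $q'\neq q$, but $\mathcal A(\Omega_0)$ also contains actions of periodic orbits of every other rotation number, and one must rule out their accumulation near $f_q(0)$. In the Birkhoff setting this is handled via the Marvizi–Melrose-type analysis exploited by De Simoi–Kaloshin–Wei \cite{DKW}; here the analogous tool should be the asymptotic expansion of \cite{AlbersTabachnikov}, combined with the maximality of $\underline t^{(q)}_\tau$ among symmetric $q$-gons, but verifying that this genuinely produces an isolated value in the full area spectrum uniformly in $\tau$ on compact subintervals of $I$ is the delicate point.
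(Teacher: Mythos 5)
Your overall architecture (maximal symmetric orbits, constancy of their action in $\tau$, then differentiation plus Lemma \ref{lemma:differential_of_area}) matches the paper, but the core step --- upgrading $f_q(\tau)\in\mathcal A(\Omega_0)$ to $f_q(\tau)\equiv f_q(0)$ --- has a genuine gap, and you have correctly flagged it yourself. The asymptotic expansion only separates the values $f_q(0)$ for different $q$ along the family of maximal $1/q$-orbits; it says nothing about actions of non-maximal $1/q$-orbits or of orbits with rotation number $p/q'$, $p\geq 2$, all of which also lie in $\mathcal A(\Omega_0)$ and could a priori accumulate at $f_q(0)$. Isolation of $f_q(0)$ in the \emph{full} area spectrum is therefore not established, and the paper does not attempt it. Instead it uses a much blunter and cleaner tool: by Sard's theorem applied to $\mathscr A\circ\Theta_q:\RR^2\to\RR$ (the action composed with the orbit map, which is $\mathscr C^{r-1}$ with $r-1>1$), the set of critical values --- hence the whole area spectrum $\mathcal A(\Omega_0)$ --- has Lebesgue measure zero. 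A continuous map $\tau\mapsto f_q(\tau)$ from an interval into a measure-zero set has an image which is both an interval and a null set, hence a single point. This replaces your isolation argument entirely and is the missing idea.

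A secondary issue: your differentiation step invokes the chain rule along $\tau\mapsto\underline t^{(q)}_\tau$, which forces you to establish $\mathscr C^1$-dependence of the maximizer on $\tau$ via the implicit function theorem; but negative \emph{definiteness} of the Hessian at a maximizer is not automatic (it is only semidefinite), so that step needs more care. The paper sidesteps this with a one-sided comparison: freezing the orbit $\underline t^{(q,\tau_0)}$ and using $A_q(\tau,\underline t^{(q,\tau_0)})\leq \alpha_q$ with equality at $\tau=\tau_0$, so that the differentiable function $\tau\mapsto A_q(\tau,\underline t^{(q,\tau_0)})$ attains an interior maximum at $\tau_0$ and its $\tau$-derivative vanishes there --- no regularity of $\tau\mapsto\underline t^{(q,\tau)}$ beyond continuity is required. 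Your treatment of the $q=2$ case and of the affine perimeter via the $1/q^2$ coefficient agrees with the paper.
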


\begin{proof}
We observe as in \cite{DKW} that the set $\mathcal A(\Omega_{0})$ has measure zero. We describe here an argument found in non-published lecture notes of De Simoi. Let $\Omega$ be a symplectic billiard with a $\mathscr C^r$-smooth boundary, $r\geq 2$. Define the maps $\Theta_q:\RR^2\to\RR^{q}$  and $\mathscr A:\RR^q\to\RR$ as follows: 1) $\Theta_q$ is the $\mathscr C^{r-1}$-smooth map which associates to a pair $(t_0,t_1)\in\RR^2$ the $q$-uple $(t_0,t_1,\ldots,t_{q-1})\in\RR^q$ such that the symplectic billiard map sends $(t_k,t_{k+1})$ to $(t_{k+1},t_{k+2})$ for any $k\in\{0,\ldots,q-2\}$; 2) $\mathscr A$ is the $\mathscr C^{r}$-smooth map defined by $\mathscr A(t_0,\ldots,t_{q-1})= \sum_{j=0}^{q-1} S_{\Omega}(t_j,t_{j+1})$ for any $q$-uple $(t_0,t_1,\ldots,t_{q-1})\in\RR^q$. It follows from the variational property \eqref{equation:variational_property} that the action of a periodic orbit is a critical value of the $\mathscr C^{r-1}$-smooth map of two variables $\mathscr A\circ\Theta:\RR^2\to\RR$. Hence if $r-1>1$, \textit{i.e.} if $r>2$, by Sard's theorem the critical values of $\mathscr A\circ\Theta$ has measure zero, whence the same holds for the area spectrum $\mathcal A(\Omega_{0})$.
\\
\\
For $\tau\in I$ and $q\geq 2$, denote by
$$
\underline p^{(\Omega_{\tau},q)}=(p_0^{(q,\tau)},\ldots,p_{q-1}^{(q,\tau)})
$$
the maximal symplectic billiard orbit in $\Omega_{\tau}$ which was introduced in Proposition \ref{proposition:symmetric_maximal_orbit}. The map $\tau\mapsto A_q(\tau,\underline t^{(q,\tau)})\in\mathcal A(\Omega_{\tau})=\mathcal A(\Omega_{0})$ is continuous and hence, constant. We denote this constant by $\alpha_q$. Let us fix a $\tau_0\in I$. Since each polygon $\underline p^{(q,\tau)}$ maximizes the area, for any $\tau\in I$, we have 
$$
A_q(\tau,\underline t^{(q,\tau_0)})\leq \alpha_q = A_q(\tau,\underline t^{(q,\tau)}),
$$
with equality at $\tau=\tau_0$. The left-hand side is differentiable in $\tau$ and admits a maximal value at $\tau=\tau_0$. 
Hence $\partial_{\tau}A_q(\tau_0,\underline t^{(q,\tau_0)})=0$. By Lemma \ref{lemma:differential_of_area}, the latter quantity is $a_{\Omega_{\tau_0},q}\left(n_{\tau_0}\right)$.
Moreover, for any $\tau\in I$, it was shown in \cite{AlbersTabachnikov} that
$$
A_q(\tau,\underline t^{(q,\tau)}) \sim \frac{L_{\tau}^3}{12}
$$
as $q \to \infty$. Since $A_q(0,\underline t^{(q,0)}) = A_q(\tau,\underline t^{(q,\tau)})$, we conclude that $L_0=L_{\tau}$.
\end{proof}

\subsection{Domains close to ellipses} The quantities $a_{\Omega,q}(n)$ can be computed explicitly when $\Omega$ is an ellipse $\mathscr E$. Moreover when $\Omega$ is close to an ellipse $\mathscr E$ in the $\mathscr C^7$ topology, $a_{\Omega,q}(n)$ is well-approximated by $a_{\mathscr E,q}(n)$. These estimates are presented in this section, namely in Propositions \ref{proposition:elliptical_variation} and \ref{proposition:perturbative_regime}.

Let $\Omega$ be a strictly convex axially symmetric domain with $\class^7$-smooth boundary which is $\delta$-close to an ellipse $\mathscr E$ in the $\class^7$-topology for some $\delta>0$.
\\
\\
\begin{proposition}[Elliptical symplectic billiards]
\label{proposition:elliptical_variation}
If the ellipse $\mathscr E$ has affine curvature $k_{\mathscr E}$, then $a_{\mathscr E,q}(n)$ can be expressed for any $q\geq 2$ and any map $n:\RR\to\RR$ as
$$a_{\mathscr E,q}(n)=\mu_q[n]_q,$$
where $\mu_q = 2k_{\mathscr E}^{-1/2}q\sin(2\pi/q)$ and $[n]_q$ denotes the cyclic sum
$$[n]_q = \frac{1}{q}\sum_{j=0}^{q-1}n(j/q).$$
\end{proposition}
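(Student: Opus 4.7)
The plan is a direct calculation using an explicit affine parametrization of the ellipse. Fix semi-axes $a,b$ for $\mathscr{E}$ and take $\tilde\gamma(\theta)=(a\cos\theta,b\sin\theta)$; then $\det(\tilde\gamma',\tilde\gamma'')=ab$, so the affine parameter is $t=(ab)^{1/3}\theta$, the affine perimeter is $L_{\mathscr{E}}=2\pi(ab)^{1/3}$, and a short computation of $\det(\gamma'',\gamma''')$ yields the (constant) affine curvature $k_{\mathscr{E}}=(ab)^{-2/3}$; in particular $k_{\mathscr{E}}^{-1/2}=(ab)^{1/3}$.

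I would first identify $\underline p^{(\mathscr{E},q)}$. Since $\mathscr{E}$ is the image of a round disk under an area-preserving affine map, Remark \ref{remark:affine_transformation_commute} together with the rotational symmetry of the disk implies that $\underline p^{(\mathscr{E},q)}$ is the image of the regular inscribed $q$-gon. Its vertices are therefore $p_k^{(q)}=\tilde\gamma(2\pi k/q)$, i.e.\ at affine arclengths $t_k^{(q)}=kL_{\mathscr{E}}/q$.

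I would then compute the two remaining geometric factors. Using the identity $\varrho^{1/3}(t)=\|\partial_t\gamma(t)\|$ recalled in the proof of Lemma \ref{lemma:differential_of_area} together with $\partial_t\gamma(t)=(ab)^{-1/3}(-a\sin\theta,b\cos\theta)$, I obtain
$$
\varrho^{-1/3}(t_k^{(q)})=\frac{(ab)^{1/3}}{\sqrt{a^2\sin^2(2\pi k/q)+b^2\cos^2(2\pi k/q)}},
$$
while sum-to-product identities applied to $p_{k+1}^{(q)}-p_{k-1}^{(q)}$ give
$$
L_k^{(q)}=2\sin(2\pi/q)\sqrt{a^2\sin^2(2\pi k/q)+b^2\cos^2(2\pi k/q)}.
$$
The two radicals cancel and leave
$$
\varrho^{-1/3}(t_k^{(q)})\,L_k^{(q)}=2(ab)^{1/3}\sin(2\pi/q)=2k_{\mathscr{E}}^{-1/2}\sin(2\pi/q),
$$
independent of $k$.

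Substituting into \eqref{equation:a_q} and reading $n$ at the normalized position $t_k^{(q)}/L_{\mathscr{E}}=k/q\in[0,1]$ (which is the convention implicit in the cyclic sum $[n]_q$) now gives
$$
a_{\mathscr{E},q}(n)=2k_{\mathscr{E}}^{-1/2}\sin(2\pi/q)\sum_{k=0}^{q-1}n(k/q)=\mu_q\,[n]_q,
$$
as desired. The only non-routine point is the $k$-independence of $\varrho^{-1/3}(t_k^{(q)})L_k^{(q)}$: this is an ellipse-specific cancellation reflecting the extra symmetries of $\mathscr{E}$ beyond the axis $OO'$, equivalently the affine-equivalence of $\underline p^{(\mathscr{E},q)}$ to a regular polygon on a disk, and I expect no genuine obstacle.
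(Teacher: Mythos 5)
Your proposal is correct and follows essentially the same route as the paper: the paper identifies the maximal orbit as the one equidistributed in affine arclength (Corollary \ref{proposition:equidistribution_orbits}) and establishes exactly your key cancellation $\varrho^{-1/3}(t_j^{(q)})L_j^{(q)}=2k_{\mathscr E}^{-1/2}\sin(2\pi/q)$ in Equation \eqref{equation:factor_ellipse} via the same explicit trigonometric parametrization. The only cosmetic difference is that you identify the orbit by affine reduction to the regular polygon in a disk, whereas the paper verifies the symplectic reflection law for equidistributed points directly; both are valid.
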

Any smooth, $1$-periodic, even map $n:\RR\to\RR$ can be decomposed into Fourier modes as
$$
n(\theta) = \sum_{p\geq 0}\widehat n_p\cos(p\theta).
$$
Given $\gamma>0$, define the Sobolev space $H^{\gamma}$ of $1$-periodic even maps $n:\RR\to\RR$ such that the sequence
$$(p^{\gamma}\widehat{n}_p)_p$$ 
is bounded. On this space, we consider the Banach norm
$$
\|n\|_{\gamma} = \sup \left\{\{p^{\gamma}|\widehat n_p|,\,p\geq 1\}\cup\{|\widehat n_0|\}.
\right\}$$

\begin{proposition}[Domains $\mathscr C^7$-close to an ellipse]
\label{proposition:perturbative_regime}
Let $\gamma\in(3,4)$ and $n\in H^{\gamma}$. For any $q\geq 2$, the quantity $a_{\Omega,q}(n)$ can be expressed as
\begin{equation}
\label{equation:perturbative_a_q}
a_{\Omega,q}(n) = a_{\mathscr E,q}(n) + \alpha_0(n) + \frac{\alpha_1(n)}{q^2}+ \frac{\alpha_2(n)}{q^3}+\frac{\|n\|_{\gamma}R_{\Omega,q}(n)}{q^\gamma}
\end{equation}
where $\alpha_0 = \lambda_{\Omega} \widehat n_0$ and $\lambda_{\Omega}=\bigo{\delta}$ depends on $\Omega$. $\alpha_1$ and $\alpha_2$ are first order linear differential operators acting on $n$ and the remainder $R_{\Omega,q}(n)$ is real valued, tending to $0$ uniformly in both $q$ and $n$ as $\Omega$ converges to the ellipse $\mathscr E$ in the $\mathscr C^7$-smooth topology.
\end{proposition}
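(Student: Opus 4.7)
The plan is to reduce the problem to Fourier modes, invoke the asymptotic expansions for symmetric maximizing orbits established in Appendix \ref{section:proof_estimates}, and then organize the resulting expressions by powers of $1/q$.

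I would first write $n(\theta)=\sum_{p\geq 0}\widehat n_p\cos(2\pi p\theta)$. Since $n\mapsto a_{\Omega,q}(n)$ is linear, I can treat each Fourier mode separately before reassembling, and the Sobolev bound $|\widehat n_p|\leq \|n\|_\gamma/p^\gamma$ will be the main tool to control the remainder at the end.

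Next, I would use the appendix to obtain, for $\Omega$ in a sufficiently small $\mathscr{C}^7$-neighborhood of $\mathscr E$, asymptotic expansions in powers of $1/q$ for both the affine arclength coordinates $t_k^{(\Omega,q)}$ of the symmetric maximizing $q$-periodic orbit and the geometric weights $w_k^{(\Omega,q)}:=\varrho_{\Omega}^{-1/3}(t_k^{(\Omega,q)})L_k^{(\Omega,q)}$; both should be smooth functions of $k/q$ plus higher-order $q$-corrections, reducing to their ellipse values when $\Omega=\mathscr E$. Substituting these into $a_{\Omega,q}(n)=\sum_k n(t_k^{(\Omega,q)})w_k^{(\Omega,q)}$, Taylor-expanding $n$ around the unperturbed positions, and recognizing each resulting sum as a trapezoidal Riemann sum $(1/q)\sum_k F(k/q;\Omega)n^{(j)}(k/q)$, an Euler--Maclaurin analysis converts them into integrals plus explicit $1/q^j$ corrections. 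The elliptic contribution recovers $a_{\mathscr E,q}(n)$ via Proposition \ref{proposition:elliptical_variation}. The order-$q^0$ correction from $\Omega\neq\mathscr E$ reduces to a multiple of $\widehat n_0$ because for $p\geq 1$ the cyclic sum $[\cos(2\pi p\cdot)]_q$ vanishes unless $q\mid p$, and the $q\mid p$ modes are absorbed into the remainder since $|\widehat n_p|\leq \|n\|_\gamma/q^\gamma$ for $p\geq q$. This yields $\alpha_0(n)=\lambda_\Omega\widehat n_0$ with $\lambda_\Omega=\mathcal O(\delta)$, since $\lambda_\Omega$ vanishes when $\Omega=\mathscr E$ and depends smoothly on $\Omega$. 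The $1/q^2$ and $1/q^3$ terms emerge as integrals of $n$ and $n'$ against smooth $\Omega$-dependent profiles, hence are first-order linear differential operators $\alpha_1,\alpha_2$ acting on $n$.

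The main obstacle is the uniform remainder estimate: showing that $R_{\Omega,q}(n)$ is bounded uniformly in $q$ and $n$, with $R_{\Omega,q}(n)\to 0$ as $\Omega\to\mathscr E$ in $\mathscr{C}^7$. I would accomplish this by splitting Fourier modes at a threshold $p\sim q$. For high frequencies ($p\gtrsim q$) the Sobolev bound directly gives a contribution of order $\|n\|_\gamma/q^\gamma$ after summation against the bounded weights $w_k^{(\Omega,q)}$. For low frequencies ($p\lesssim q$), the smoothness of the orbit expansion combined with the $\mathscr{C}^7$-closeness of $\Omega$ to $\mathscr E$ forces the perturbation of the $\mathscr E$-expansion at each fixed mode to be controlled by $\delta$ times $p^\gamma$, which summed against $\widehat n_p$ produces an $o_\delta(\|n\|_\gamma/q^\gamma)$ remainder. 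The range $\gamma\in(3,4)$ is exactly what is needed to make the $1/q^3$ term meaningful while keeping the remainder strictly of higher order.
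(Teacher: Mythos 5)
Your proposal follows essentially the same route as the paper: substitute the appendix expansions of the orbit coordinates $t_j$ and of the weights $\varrho(t_j)^{-1/3}L_j$ into $a_{\Omega,q}(n)=\sum_j n(t_j)\varrho(t_j)^{-1/3}L_j$, Taylor-expand $n$ about $Lj/q$, and identify the resulting cyclic sums with averages plus aliasing errors of size $\bigo{\|n\|_{\gamma}/q^{\gamma}}$ via the divisibility structure of $[F]_q$ (Lemma \ref{lemma:bracket_estimates}), which is exactly how the paper obtains $\alpha_0=\lambda_{\Omega}\widehat n_0$ and the first-order operators $\alpha_1,\alpha_2$. The only cosmetic differences are that the paper does not need Euler--Maclaurin (for periodic integrands the trapezoidal sum equals the mean plus the aliased Fourier modes exactly, so there are no intermediate $1/q^j$ quadrature corrections) and that it controls the remainder globally through $\|n\|_{\mathscr C^2}\leq C\|n\|_{\gamma}$ and Lemma \ref{lemma:norm_product} rather than via your frequency cutoff at $p\sim q$.
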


\begin{remark}
    In Proposition \ref{proposition:perturbative_regime} and throughout the remainder of the paper, we say that a domain $\Omega$ \textit{converges to an ellipse} $\E$ in the $\mathscr C^r$-topology if the affine curvature of $\Omega$ converges in the $\mathscr C^{r-3}$-topology to a constant (see Proposition \ref{Constantcurvature} in Appendix \ref{section:affine_parametrization}).
\end{remark}
The proofs of Propositions \ref{proposition:elliptical_variation} and \ref{proposition:perturbative_regime} follow from estimates given in Appendix \ref{section:proof_estimates}.

\subsection{Isospectral operator and proof of Theorem \ref{theorem:main1_simple}}
In this subsection, we show how the proof of Theorem \ref{theorem:main1_simple} can be deduced from Propositions \ref{proposition:elliptical_variation} and \ref{proposition:perturbative_regime}.
\\
\\
Let $\Omega$ be a strictly convex domain with $\mathscr C^7$-smooth boundary and affine perimeter $1$. Assume that $\partial\Omega$ contains the two points $O$ and $O'$ and that it is symmetric with respect to the line $OO'$. Let us define an operator $T_{\Omega}$ from the space of $1$-periodic even maps $n:\RR\to\RR$ to the space of real valued sequences $(u_q)_{q\geq 0}$. In the notation of Propositions \ref{proposition:elliptical_variation} and \ref{proposition:perturbative_regime}, we write for any integer $q \geq 0$ and $1$-periodic even map $n:\RR\to\RR$ 
\begin{equation}
\label{equation:definition_operator}
T_{\Omega}(n)_q = 
\left\{\begin{array}{ll}
\widehat n_0 & \text{if } q=0;\\
n(0) & \text{if } q=1;\\
n(1/2) & \text{if } q=2;\\
a_{\Omega,q}(n)-(\mu_q+\lambda_{\Omega})\widehat n_0-\frac{\alpha_1(n)}{q^2}-\frac{\alpha_2(n)}{q^3} & \text{if } q\geq 3.
\end{array}\right.
\end{equation}
This defines a sequence $T_{\Omega}(n) = (T_{\Omega}(n)_q)_{q \geq 0}$ of real numbers.
\\
\\
Given $\gamma>1$, we consider the operator $T_{\Omega}$ restricted to the previously defined space of maps $H^{\gamma}$. This space has an analogue for sequences, namely the space $h^{\gamma}$ of sequences $(u_q)_{q\geq0}$ such that $q^{\gamma} u_q$ is bounded. We now show that $T_{\Omega}(H^{\gamma}) \subset h^{\gamma}$ whenever $\gamma\in(3,4)$.

\begin{proposition}
Let $\gamma\in(3,4)$. The map $T_{\Omega}$ defines a bounded operator from $H^{\gamma}$ to $h^{\gamma}$ called \textit{linear isospectral operator}.
\end{proposition}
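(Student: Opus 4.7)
The plan is to substitute the asymptotic expansion from Proposition \ref{proposition:perturbative_regime}, together with the elliptic formula $a_{\mathscr E, q}(n) = \mu_q [n]_q$ from Proposition \ref{proposition:elliptical_variation} and the identity $\alpha_0(n) = \lambda_\Omega \widehat n_0$, into the definition \eqref{equation:definition_operator} of $T_\Omega(n)_q$ for $q \geq 3$. The subtracted terms in \eqref{equation:definition_operator} are precisely designed so that all of them cancel against terms from \eqref{equation:perturbative_a_q}, leaving
\begin{equation*}
T_\Omega(n)_q = \mu_q\bigl([n]_q - \widehat n_0\bigr) + \frac{\|n\|_\gamma R_{\Omega,q}(n)}{q^\gamma}.
\end{equation*}

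The next step is to identify $[n]_q$ in terms of the Fourier coefficients of $n$ via the root-of-unity orthogonality relation: since $[n]_q = \frac{1}{q}\sum_{j=0}^{q-1} n(j/q)$ is a Riemann-type average on equally spaced points, it picks out exactly those Fourier modes of $n$ whose frequency is divisible by $q$, giving
\begin{equation*}
[n]_q - \widehat n_0 = \sum_{k \geq 1} \widehat n_{kq}.
\end{equation*}
Using the bound $|\widehat n_p| \leq \|n\|_\gamma\, p^{-\gamma}$ for $p \geq 1$ (from the very definition of the $H^\gamma$ norm), this yields
\begin{equation*}
\bigl|[n]_q - \widehat n_0\bigr| \leq \|n\|_\gamma\, q^{-\gamma}\sum_{k\geq 1}k^{-\gamma} = C_\gamma\,\|n\|_\gamma\,q^{-\gamma},
\end{equation*}
where the constant $C_\gamma=\sum_{k\geq 1}k^{-\gamma}$ is finite because $\gamma>1$. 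Combining this estimate with the uniform boundedness of $\mu_q=2k_\mathscr{E}^{-1/2}q\sin(2\pi/q)$ (which converges to $4\pi k_\mathscr{E}^{-1/2}$ as $q\to\infty$) and of $R_{\Omega,q}(n)$ (granted by Proposition \ref{proposition:perturbative_regime}, since it tends uniformly to $0$ as $\Omega$ approaches $\mathscr E$, so is in particular bounded once $\Omega$ lies in a $\mathscr C^7$-neighborhood of $\mathscr E$), one obtains $q^\gamma|T_\Omega(n)_q| \leq C\|n\|_\gamma$ for every $q \geq 3$, for some constant $C$ depending only on $\Omega$ and $\gamma$.

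The small values $q\in\{0,1,2\}$ are handled directly: $|T_\Omega(n)_0| = |\widehat n_0| \leq \|n\|_\gamma$ by definition, and for $q\in\{1,2\}$ the Fourier series of $n$ converges absolutely because $\gamma>1$, so
\begin{equation*}
|n(0)|,\ |n(1/2)| \;\leq\; \sum_{p\geq 0}|\widehat n_p| \;\leq\; \Bigl(1+\sum_{p\geq 1}p^{-\gamma}\Bigr)\|n\|_\gamma \;\leq\; C\|n\|_\gamma.
\end{equation*}
Putting all cases together gives $\|T_\Omega(n)\|_{h^\gamma} \leq C\|n\|_\gamma$, which is the claimed boundedness.

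The only genuinely content-bearing step is the Fourier identification of $[n]_q$ as a subseries of the $\widehat n_p$'s via the root-of-unity trick; once this is in hand, the rest is routine geometric-series bookkeeping and the assumption $\gamma>1$ (inherited from $\gamma>3$) is more than enough. I note that the upper bound $\gamma<4$ from Proposition \ref{proposition:perturbative_regime} plays no role in establishing boundedness and is presumably needed only for the subsequent invertibility arguments.
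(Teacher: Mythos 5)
Your proof is correct and follows essentially the same route as the paper: reduce $T_\Omega(n)_q$ to $\mu_q[n]_q^{\ast}$ plus the remainder, then bound $[n]_q^{\ast}=[n]_q-\widehat n_0$ by $\zeta(\gamma)\|n\|_{\gamma}q^{-\gamma}$ via the root-of-unity identification of $[n]_q$ with $\sum_{m\geq 0}\widehat n_{mq}$ (which is exactly Remark \ref{remark:bracket_and_Fourier} and Lemma \ref{lemma:bracket_estimates}, which you re-derive inline). The only cosmetic difference is that you also spell out the $q\in\{0,1,2\}$ cases, which the paper's proof of this proposition leaves implicit.
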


\begin{proof}
Let $n\in H^{\gamma}$. Assume that $\delta$ is chosen sufficiently small such that if $\partial\Omega$ is $\delta$-close to $\mathscr E$ in the $\mathscr C^7$-smooth topology, then $|R_{\Omega,q}(n)|\leq 1$ for all $q$ and $n$. Hence, by Proposition \ref{proposition:perturbative_regime}, for $q\geq 3$,  $T_{\Omega}(n)_q$ can be expressed as
$$
T_{\Omega}(n)_q = \mu_q[n]_q^{\ast}+\bigo{\frac{\|n\|_{\gamma}}{q^\gamma}},
$$
where $[n]_q^{\ast} = [n]_q-\widehat{n}_0$. As shown in Lemma \ref{lemma:bracket_estimates}, $n\in H^{\gamma}$ implies that $[n]_q^{\ast}\in h^{\gamma}$ together with $\|[n]^{\ast}\|_{\gamma}\leq C\|n\|_{\gamma}$ for some universal constant $C>0$. Hence,
$$
\|T_{\Omega}(n)\|_{\gamma} = \sup_{q>0} q^{\gamma}|T_{\Omega}(n)_q|\leq C\sup_q \mu_q+1<+\infty,
$$
which concludes the proof of the proposition.
\end{proof}

The idea of calling $T_{\Omega}$ a \textit{linear isospectral operator}  goes back to \cite{DKW}, where $T_{\Omega}$ has an analogue for classical (Birkhoff) billiards. The name refers to the following property of $T_{\Omega}$, which is related to length isospectral deformations of $\Omega$:

\begin{proposition}
\label{proposition:kernel_operator}
If $(\Omega_{\tau})_{\tau}$ satisfies the assumptions of Theorem \ref{theorem:main1_simple}, then $T_{\Omega_{\tau}}(n_{\tau}) = 0$ for all $\tau$.
\end{proposition}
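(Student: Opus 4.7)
The plan is to verify that each component of $T_{\Omega_\tau}(n_\tau)$ vanishes, treating the four cases in the piecewise definition \eqref{equation:definition_operator} separately. The main inputs are the two consequences of Proposition \ref{proposition:vanish_on_n}: the discrete $X$-ray transform $a_{\Omega_\tau, q}(n_\tau)$ vanishes for every $q \ge 2$, and the affine perimeter $L_\tau$ is independent of $\tau$. After rescaling, I normalize $L_0 = 1$ so that $n_\tau$ may be identified with a $1$-periodic map.

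The cases $q = 1$ and $q = 2$ follow at once from the symmetry-normalized parametrization. By construction $\gamma(\tau, 0) = O$ for all $\tau$, and since $L_\tau \equiv 1$ we also have $\gamma(\tau, 1/2) = O'$ for all $\tau$. Differentiating these identities in $\tau$ gives $\partial_\tau \gamma(\tau, 0) = \partial_\tau \gamma(\tau, 1/2) = 0$, whence $n_\tau(0) = n_\tau(1/2) = 0$ directly from the definition $n_\tau = \omega(\partial_\tau \gamma, \partial_s \gamma)$. For $q = 0$, I would argue that the deformation preserves area: since $A_q(\tau, \underline t^{(q, \tau)})$ is $\tau$-independent for each $q \ge 2$ and the leading coefficient $a_0$ of its asymptotic expansion equals the area of $\Omega_\tau$, the area is constant in $\tau$. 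A single integration by parts in the formula $\mathrm{Area}(\Omega_\tau) = \tfrac{1}{2}\int_0^1 \omega(\gamma(\tau,s), \partial_s \gamma(\tau,s))\,ds$ yields $\partial_\tau \mathrm{Area}(\Omega_\tau) = \int_0^1 n_\tau(s)\,ds = \widehat{(n_\tau)}_0$, which must therefore vanish.

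The nontrivial case is $q \ge 3$. Substituting the expansion of Proposition \ref{proposition:perturbative_regime} into $a_{\Omega_\tau, q}(n_\tau) = 0$ and using $\widehat{(n_\tau)}_0 = 0$ (which kills the $\lambda_{\Omega_\tau}\widehat{(n_\tau)}_0$ term and lets us replace $\mu_q [n_\tau]_q$ with $\mu_q [n_\tau]_q^*$) reduces the identity to
\begin{equation*}
\mu_q [n_\tau]_q^* + \frac{\alpha_1(n_\tau)}{q^2} + \frac{\alpha_2(n_\tau)}{q^3} = -\frac{\|n_\tau\|_\gamma R_{\Omega_\tau, q}(n_\tau)}{q^\gamma}.
\end{equation*}
Since $\mu_q$ is bounded in $q$ and $[n_\tau]_q^* = O(\|n_\tau\|_\gamma / q^\gamma)$ for $n_\tau \in H^\gamma$ (Lemma \ref{lemma:bracket_estimates}), multiplying by $q^2$ and letting $q \to \infty$ forces $\alpha_1(n_\tau) = 0$; multiplying by $q^3$ and repeating forces $\alpha_2(n_\tau) = 0$. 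Substituting these vanishings back into \eqref{equation:definition_operator} gives $T_{\Omega_\tau}(n_\tau)_q = 0$ for $q \ge 3$.

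The main subtlety lies in this final asymptotic extraction, which depends on the hypothesis $\gamma \in (3, 4)$ to guarantee that the remainder decays strictly faster than $1/q^3$, so that $\alpha_1(n_\tau)$ and $\alpha_2(n_\tau)$ can be peeled off in sequence. Apart from this bookkeeping, the proof is driven entirely by the estimates already established in Propositions \ref{proposition:vanish_on_n} and \ref{proposition:perturbative_regime}, together with the symmetry-normalized parametrization.
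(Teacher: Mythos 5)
Your proposal is correct and follows essentially the same route as the paper: feed the vanishing $a_{\Omega_\tau,q}(n_\tau)=0$ from Proposition \ref{proposition:vanish_on_n} into the expansion of Proposition \ref{proposition:perturbative_regime}, peel off the $1/q^0$, $1/q^2$ and $1/q^3$ coefficients using $\gamma>3$, and dispose of $q=1,2$ via the pinned points $O$, $O'$ in the normalized parametrization. The one place you diverge is the $q=0$ case: the paper extracts $\widehat{n}_0=0$ from the constant term $(\mu_\infty+\lambda_\Omega)\widehat n_0$ of the same expansion (using that $\lambda_\Omega=\bigo{\delta}$ keeps this coefficient nonzero), whereas you derive it from constancy of the enclosed area via $\partial_\tau\mathrm{Area}(\Omega_\tau)=\int_0^1 n_\tau$; your variant is equally valid and has the small advantage of not relying on the smallness of $\lambda_\Omega$.
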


\begin{proof}
Fix $\tau$ and write $\Omega=\Omega_{\tau}$ and $n = n_{\tau}$. By Proposition \ref{proposition:vanish_on_n}, for any $q\geq 2$, we can write
\begin{equation}
\label{equation:zero_first_variation}0 = a_{\Omega,q}(n) = a_{\mathscr E,q}(n) + \alpha_0(n) + \frac{\alpha_1(n)}{q^2}+ \frac{\alpha_2(n)}{q^3}+\frac{\|n\|_{\gamma}R_{\Omega,q}(n)}{q^\gamma}.
\end{equation}
Since $\gamma>3$, the terms in $1/q^0$, $1/q^2$ and $1/q^3$ should vanish in Equation \eqref{equation:zero_first_variation}. Using the expressions of $a_{\mathscr E,q}(n)$ given in Proposition \ref{proposition:elliptical_variation} and of $\alpha_0(n)$ given in Proposition \ref{proposition:perturbative_regime}, we deduce that
$$
\widehat n_0 = 0, \qquad \alpha_1(n) = 0,
\qquad\text{and}\qquad
\alpha_2(n) = 0.
$$
This automatically implies that $T_{\Omega}(n)_q = 0$ for $q\geq 3$ and $q=0$. The cases $q=1$ and $q=2$ also hold since each domain $\partial\Omega_{\tau}$ contains the points $O$ and $O'$.
\end{proof}

As stated in Proposition \ref{proposition:kernel_operator}, for each $\tau\in I$, the deformation map $n_{\tau}$ associated to an isospectral deformation lies in the kernel of the operator $T_{\Omega_{\tau}}$. We now prove that in fact, if $\Omega$ is sufficiently close to an ellipse in the $\mathscr C^7$ topology, the operator $T_{\Omega}:H^{\gamma}\to h^{\gamma}$ is invertible and hence, has trivial kernel.

\begin{proposition}
\label{proposition:invertibility_operator}
Let $\gamma\in(3,4)$ and $\mathscr E$ be an ellipse. There exists $\delta=\delta(\mathscr E)>0$ such that if $\Omega$ is $\delta$-close to $\mathscr E$ in the $\mathscr C^r$-topology, then the operator $T_{\Omega}:H^{\gamma}\to h^{\gamma}$ is invertible.
\end{proposition}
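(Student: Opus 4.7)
The plan is to view $T_\Omega$ as a small perturbation of the operator $T_{\mathscr E}$ attached to the ellipse itself, for which an explicit inverse can be constructed.

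First, I would compute the difference $T_\Omega - T_{\mathscr E}$. For $q=0,1,2$, the components of $T_\Omega(n)$ are defined by functionals of $n$ that do not depend on the domain, so the difference vanishes. For $q\geq 3$, substituting the expansion \eqref{equation:perturbative_a_q} into the definition \eqref{equation:definition_operator} and cancelling the $\lambda_\Omega\widehat{n}_0$, $\alpha_1(n)/q^2$ and $\alpha_2(n)/q^3$ terms gives
\begin{equation*}
(T_\Omega - T_{\mathscr E})(n)_q = \frac{\|n\|_\gamma\,R_{\Omega,q}(n)}{q^\gamma}, \qquad q\geq 3.
\end{equation*}
Multiplying by $q^\gamma$, the operator norm of $T_\Omega - T_{\mathscr E}:H^\gamma\to h^\gamma$ is bounded by $\sup_{q,n} |R_{\Omega,q}(n)|$, which tends to $0$ as $\Omega\to\mathscr E$ in $\mathscr C^7$ by the uniform convergence part of Proposition \ref{proposition:perturbative_regime}.

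Next, I would invert $T_{\mathscr E}$ explicitly. For $q\geq 3$, we have $T_{\mathscr E}(n)_q = \mu_q[n]_q^\ast$, where $(\mu_q)$ is uniformly bounded below by some $c>0$ because $\mu_q\to 4\pi k_{\mathscr E}^{-1/2}$. Expanding $n$ in Fourier series and using character orthogonality, one checks that $[n]_q^\ast = \sum_{k\geq 1} \widehat{n}_{kq}$, so M\"obius inversion yields
\begin{equation*}
\widehat{n}_q = \sum_{k\geq 1} \mu(k)\,\frac{T_{\mathscr E}(n)_{kq}}{\mu_{kq}}, \qquad q\geq 3,
\end{equation*}
where $\mu(k)$ denotes the number-theoretic M\"obius function (not to be confused with $\mu_q$). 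The coefficient $\widehat{n}_0$ is recovered directly from $T_{\mathscr E}(n)_0$, and $\widehat{n}_1,\widehat{n}_2$ are obtained by inverting the $2\times 2$ linear system arising from $T_{\mathscr E}(n)_1=n(0)$ and $T_{\mathscr E}(n)_2=n(1/2)$, once the tail contributions from $\widehat{n}_p$ with $p\geq 3$ have been subtracted. For $\gamma>1$, all these series converge absolutely and the bound $|\widehat{n}_q|\leq C\|T_{\mathscr E}(n)\|_\gamma/q^\gamma$ holds with $C=c^{-1}\zeta(\gamma)$, providing a bounded inverse $T_{\mathscr E}^{-1}:h^\gamma\to H^\gamma$.

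With these two ingredients in hand, I would conclude by a Neumann series argument: write
\begin{equation*}
T_\Omega = T_{\mathscr E}\bigl(\id + T_{\mathscr E}^{-1}(T_\Omega - T_{\mathscr E})\bigr),
\end{equation*}
and choose $\delta$ small enough that $\|T_{\mathscr E}^{-1}\|\cdot\|T_\Omega-T_{\mathscr E}\|<1$; invertibility of $T_\Omega$ then follows. I expect the main obstacle to be the bounded invertibility of $T_{\mathscr E}$ itself on the sup-weighted spaces $H^\gamma$ and $h^\gamma$: one must control $\sum_{k\geq 1}\mu(k)/\mu_{kq}$ uniformly in $q$ and carefully interchange summations to propagate the weight $q^\gamma$ through the M\"obius inversion, as well as handle the recovery of $\widehat{n}_1$ and $\widehat{n}_2$ where the tail series is only conditionally small. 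These arithmetic estimates are presumably the content of Appendix \ref{section:operators_appendix}.
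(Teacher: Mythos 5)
Your proposal is correct and follows essentially the same route as the paper: the paper likewise writes $T_{\Omega}$ as a perturbation of $T_{\mathscr E}$, bounds $\|T_{\Omega}-T_{\mathscr E}\|_{\gamma}$ by the uniform smallness of $R_{\Omega,q}(n)$ from Proposition \ref{proposition:perturbative_regime} (noting the $q\in\{0,1,2\}$ components coincide), and invokes invertibility of $T_{\mathscr E}$, which is proved in Appendix \ref{section:operators_appendix} by exactly the M\"obius-inversion argument you describe, including the separate recovery of $\widehat n_0,\widehat n_1,\widehat n_2$. The only cosmetic difference is that the appendix first rescales so that $\mu_q=1$ for $q\geq 3$ rather than carrying the factors $\mu_{kq}$ through the inversion.
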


\begin{proof}
The proof relies on the two following points: 1) for the ellipse $\mathscr E$, the operator $T_{\mathscr E}:H^{\gamma}\to h^{\gamma}$ is bounded and invertible; 2) the operators $T_{\Omega}$ and $T_{\mathscr E}$ are arbitrarily close to each other in norm if $\delta$ is sufficiently small. The conclusion will follow from 1) and 2).
\\
\\
Invertibility of $T_{\mathscr E}$ is proved in appendix, \textit{cf.} Proposition \ref{proposition:invertibility_ellipse_operator}. To prove 2), let us fix $\varepsilon>0$ and choose $\delta_0>0$ so that if $\Omega$ is $\delta$-close to $\mathscr E$ in the $\mathscr C^7$ topology, then for any $n\in H^{\gamma}$ and $q\geq 3$, we have $|R_{\Omega,q}(n)|\leq \varepsilon$. The formula for $T_{\Omega}(n)_q$ given in Equation \eqref{equation:definition_operator} implies that
$$
q^{\gamma}|T_{\Omega}(n)_q  - T_{\mathscr E}(n)_q| = \bigo{\varepsilon\|n\|_{\gamma}}.
$$
Note that by definition, $T_{\Omega}(n)$ and $T_{\mathscr E}(n)_q$ coincide for $q\in\{0,1,2\}$.
As a consequence, $\|T_{\Omega}  - T_{\mathscr E}\|_{\gamma}=\bigo{\varepsilon}$ in the operator norm $\|\cdot\|_{\gamma}$ associated to the Banach spaces $H^{\gamma}$ and $h^{\gamma}$.
\end{proof}

We can now prove Theorem \ref{theorem:main1_simple}.

\begin{proof}[Proof of Theorem \ref{theorem:main1_simple}]
Let $\delta>0$ be such that for any domain $\Omega$ which is $\delta$-close to $\mathscr E$ in the $\mathscr C^7$-topology, the operator $T_{\Omega}$ is invertible. In particular, if $(\Omega_{\tau})_{\tau}$ is a one-parameter family of domains $\delta$-close to $\mathscr E$, then the operators $T_{\Omega_{\tau}}$ are invertible. Yet for any $\tau\in I$, the deformation map $n_{\tau}$ lies in the kernel of $T_{\Omega_{\tau}}$, as stated in Proposition \ref{proposition:kernel_operator}. Hence, $n_{\tau}$ vanishes identically which, as a consequence of Proposition \ref{proposition:zero_deformation_map}, implies that $\Omega_{\tau} = \Omega_{0}$ for all $\tau$.
\end{proof}

\begin{proof}[Proof of Theorem \ref{theorem:main2_simple}]
For arbitrary $\Omega$, it is not clear whether or not $T_{\Omega}$ is invertible. However, given $q_0>0$, the operator $\tilde T_{\Omega}$ given for any $n$ and any $q\geq q_0$ by 
$$\tilde T_{\Omega}(n)_q = T_{\Omega}(n)_q$$
can be decomposed as 
$$
\tilde T_{\Omega} =  \tilde T_{\Omega}^1+\tilde T_{\Omega}^2,
$$
where $\tilde T_{\Omega}^1$ is defined on maps $n$ whose Fourier coefficients of order $>q_0$ vanish and $\tilde T_{\Omega}^2$ is defined on maps $n$ whose Fourier coefficients of order $\leq q_0$ vanish. This decomposition is obtained by projecting $\tilde{T}_{\Omega}$ onto the $L^2$ orthogonal subspaces spanned by Fourier modes of order greater than and less than or equal to $q_0$.
\\
\\
Previous computations, similar to those in \cite[Theorem 6.1]{DKW}, show that given $\Omega$, there exists $q_0$ such that $\tilde T_{\Omega}^2$ is invertible. By compactness, we can choose $q_0$ so that for any $\tau$, the operator $\tilde T_{\Omega_{\tau}}^2$ is invertible. Isospectral deformations then satisfy
$$
\tilde T_{\Omega_{\tau}}(n_{\tau}) = \tilde T_{\Omega_{\tau}}^1(n_{\tau}^-)+\tilde T_{\Omega_{\tau}}^2(n_{\tau}^+)=0,
$$
where $n_{\tau}^-$ is the projection of $n_{\tau}$ onto the space of maps whose Fourier coefficients of order $>q_0$ vanish and $n_{\tau}^+$ is the projection of $n_{\tau}$ on the space of maps whose Fourier coefficients of order $\leq q_0$ vanish. As a result,
$$
n_{\tau}^+ = F_{\tau}(n_{\tau}^-),
$$
where $F_{\tau} = -{(\tilde T_{\Omega_{\tau}}^2})^{-1} \tilde T_{\Omega_{\tau}}^1$. The theorem then follows from the fact that ${(\tilde T_{\Omega_{\tau}}^2})^{-1}$ is finite rank.
\end{proof}

\appendix
\section{Affine curvature}
\label{section:affine_parametrization}

Consider a strictly convex curve $\Gamma\subset\RR^2$ whose radius of curvature is denoted at each point by $\varrho$. The \textit{affine length} of $\Gamma$ is defined to be
$$
L=\int_{0}^{|\partial\Omega|}\varrho^{-1/3}(s)ds,
$$ 
where $s$ denotes here the regular arclength curvature.
\\
\\
Given two vectors $u,v$ of $\RR^2$, denote by $[u,v]=\omega(u,v)$ the determinant of $u$ and $v$ in a positively oriented orthonormal basis of $\RR^2$; equivalently, $\omega=dx\wedge dy$, where $(x,y)$ are Darboux coordinates corresponding to the canonical basis of $\RR^2$.

\begin{proposition}[See \cite{SapiroTannenbaum}]
$\Gamma$ can be parametrized by a map $\gamma:\RR\to\RR^2$ which is $L$-periodic and satisfies $[\gamma'(t),\gamma''(t)]=1$ for each $t\in\RR$. Such a map is called an \textit{affine arclength parametrization} of $\gamma$.
\end{proposition}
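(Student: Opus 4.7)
The plan is to obtain the claimed parametrization as a smooth reparametrization of the Euclidean arclength parametrization. Let $c:\RR/|\partial\Omega|\ZZ \to \RR^2$ be the Euclidean arclength parametrization of $\Gamma$, so that $c'(s)$ is the unit tangent and $[c'(s),c''(s)] = \kappa(s) = \varrho(s)^{-1}$. Strict convexity is used here to guarantee that $\kappa > 0$ uniformly (with a consistent orientation). Seek a change of variable $s = \psi(t)$ with $\psi' > 0$ and define $\gamma(t) := c(\psi(t))$.

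The chain rule gives
\[
\gamma'(t) = \psi'(t)\, c'(\psi(t)), \qquad \gamma''(t) = \psi''(t)\, c'(\psi(t)) + \psi'(t)^2\, c''(\psi(t)),
\]
and since $[c',c'] = 0$, expanding the determinant yields
\[
[\gamma'(t), \gamma''(t)] = \psi'(t)^3\, [c'(\psi(t)), c''(\psi(t))] = \psi'(t)^3\, \varrho(\psi(t))^{-1}.
\]
Requiring this to equal $1$ forces $\psi'(t) = \varrho(\psi(t))^{1/3}$. I would solve this by first defining its inverse: set $\phi(s) = \int_0^s \varrho^{-1/3}(u)\, du$. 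Since $\varrho$ is smooth and positive on the compact curve, $\phi$ is smooth, strictly increasing, and hence a global diffeomorphism onto $[0, L]$; taking $\psi := \phi^{-1}$ and then $\gamma := c \circ \psi$ yields a smooth parametrization with $[\gamma', \gamma''] \equiv 1$ by construction.

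For the periodicity assertion, observe that $\varrho$ is $|\partial\Omega|$-periodic in $s$, so the definition of $L$ yields $\phi(s + |\partial\Omega|) = \phi(s) + L$. Inverting this identity gives $\psi(t + L) = \psi(t) + |\partial\Omega|$, and composing with the $|\partial\Omega|$-periodic map $c$ yields $\gamma(t + L) = \gamma(t)$. Thus $\gamma$ is $L$-periodic and satisfies the affine arclength normalization. The only subtle point is ensuring that $\varrho > 0$ everywhere so the change of variable $\phi$ and its inverse $\psi$ are smooth, which is exactly where strict convexity of $\Gamma$ enters; beyond that, the argument is a direct application of the chain rule.
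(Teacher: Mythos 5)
Your proposal is correct and follows exactly the same route as the paper: reparametrize the Euclidean arclength parametrization by the inverse of $s\mapsto\int_0^s\varrho^{-1/3}(\sigma)\,d\sigma$, which is precisely the diffeomorphism $\varphi$ the paper defines. Your version simply spells out the chain-rule computation, the role of strict convexity in ensuring $\varrho>0$, and the $L$-periodicity, all of which the paper leaves implicit.
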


\begin{proof}
Consider an arclength parametrization $\delta(s)$ of $\Gamma$ and define $\gamma(t) = \delta(\varphi(t))$, where $\varphi:\RR\to\RR$ is a diffeormorphism given implicitly by
$$
\varphi^{-1}(s) =\int_0^s\varrho^{-1/3}(\sigma)d\sigma.
$$
\end{proof}

It can be deduced that $[\gamma'(t),\gamma^{(3)}(t)]=0$ for any $t$ and hence, there is a map $k:\RR\to\RR$, called the \textit{affine curvature} of $\Gamma$, which satisfies
$$
\gamma^{(3)}=-k\gamma'.
$$
The following proposition gives formulas for the first three affine arclength derivatives of $\gamma$ which we will use in the paper:

\begin{proposition}
\label{proposition:derivatives_gamma}
Let $u(t)$ be the unit tangent vector to $\partial\Omega$ at $\gamma(t)$ and denote by $N(t)$ be the outward pointing unit normal to $\partial\Omega$ at $\gamma(t)$. Then,
\begin{align*}
    \gamma'(t) &= \varrho^{1/3}(t)u(t),\\
    \gamma''(t) &= \frac{1}{3}\varrho'(t)\varrho(t)^{-2/3}u(t)-\varrho(t)^{-1/3}N(t),\\
    \gamma^{(3)}(t) &= -k(t)\varrho(t)^{1/3}u(t),
\end{align*}
where 
\begin{align*}
  k(t) &= \varrho(t)^{-4/3}-\frac{1}{3}\varrho(t)^{-1/3}(\varrho'\varrho^{-2/3})'(t)\\
  &= \varrho(t)^{-4/3}-\frac{1}{3}\varrho''(t)\varrho^{-1}+\frac{2}{9}\varrho'(t)^2\varrho(t)^{-2}.  
\end{align*}

\end{proposition}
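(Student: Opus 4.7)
The plan is to use the chain rule throughout: $\gamma(t)=\delta(\varphi(t))$ where $\delta$ is a Euclidean arclength parametrization and $\varphi$ is the diffeomorphism introduced in the previous proof, characterized by $\varphi'(t)=\varrho(\varphi(t))^{1/3}$. This identity, together with the Frenet formulas for $\delta$ written in the basis $(u,N)$, will give every derivative of $\gamma$ algorithmically.

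First I would compute $\gamma'(t)=\delta'(\varphi(t))\varphi'(t)=\varrho^{1/3}(t)\,u(t)$, where I abuse notation by writing $\varrho(t)$ for $\varrho(\varphi(t))$. For the second derivative, I would differentiate once more, using the Leibniz rule together with the fact that along a convex boundary oriented counterclockwise one has $\delta''(s)=-\varrho(s)^{-1}N$. A short computation of $\varphi''(t)$ from $\varphi'=\varrho^{1/3}\circ\varphi$ yields $\varphi''(t)=\tfrac{1}{3}\varrho'(t)\varrho(t)^{-2/3}$ (once one notes that $\tfrac{d}{dt}\varrho(\varphi(t))$ is exactly what is meant by $\varrho'(t)$); combining both contributions gives the stated expression for $\gamma''$.

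Next I would treat $\gamma^{(3)}$ in two steps. Differentiating the relation $[\gamma'(t),\gamma''(t)]=1$ (which holds because $\gamma$ is affine-arclength parametrized) yields $[\gamma'(t),\gamma'''(t)]=0$, so $\gamma'''$ is a scalar multiple of $\gamma'$; define $k(t)$ by $\gamma'''(t)=-k(t)\gamma'(t)=-k(t)\varrho^{1/3}(t)u(t)$. To obtain the explicit formula for $k$, I would differentiate the expression for $\gamma''$ in the moving frame $(u,N)$, using the identities $u'(t)=\varphi'(t)\delta''(\varphi(t))=-\varrho(t)^{-2/3}N(t)$ and, symmetrically, $N'(t)=\varrho(t)^{-2/3}u(t)$. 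The $N$-components will cancel (this is exactly the content of $[\gamma',\gamma''']=0$), leaving a multiple of $u$; identifying the coefficient with $-k\varrho^{1/3}$ gives
\[
k(t)=\varrho(t)^{-4/3}-\tfrac{1}{3}\varrho(t)^{-1/3}\bigl(\varrho'\varrho^{-2/3}\bigr)'(t),
\]
and expanding the product rule in the second term yields the second form of $k$.

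The only genuine obstacle is bookkeeping: one must scrupulously distinguish derivatives with respect to the affine arclength $t$ from derivatives with respect to the Euclidean arclength $s$, since $\varrho$ naturally appears as a function of $s$, and the formula $\varrho'(t)=\varrho'_s(\varphi(t))\varphi'(t)=\varrho'_s\cdot\varrho^{1/3}$ is easy to misapply. Orientation conventions (the sign in $\delta''=-\varrho^{-1}N$) must also be consistent with the outward-pointing choice of $N$. Once these conventions are fixed at the outset, the computation is an entirely mechanical application of the chain rule and Frenet formulas.
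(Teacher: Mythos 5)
Your proposal is correct: the paper states Proposition \ref{proposition:derivatives_gamma} without proof, and the computation you outline (chain rule through $\gamma=\delta\circ\varphi$ with $\varphi'=\varrho^{1/3}\circ\varphi$, the Frenet relations $u'(t)=-\varrho^{-2/3}N$ and $N'(t)=\varrho^{-2/3}u$ in affine arclength, and the cancellation of the $N$-components in $\gamma^{(3)}$) is exactly the standard derivation the authors evidently intend, with all signs and the bookkeeping convention $\varrho'(t)=\varrho'_s\cdot\varrho^{1/3}$ handled consistently with the stated formulas.
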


It is known \cite{SapiroTannenbaum} that conic sections $\Gamma$ are characterized uniquely by constant affine curvature $k_\Gamma$. In particular, $k>0$ if $\Gamma$ is an ellipse, $k=0$ if $\Gamma$ is a parabola, and $k<0$ if $\Gamma$ is a hyperbola.

\begin{proposition}\label{Constantcurvature}
Assume that $\partial\Omega$ has affine constant curvature $k>0$. Then it is an ellipse.
\end{proposition}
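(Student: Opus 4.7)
The plan is to use the ODE $\gamma^{(3)}=-k\gamma'$ from Proposition \ref{proposition:derivatives_gamma}, which by assumption has constant coefficient $k$. Integrating once yields a constant vector $C\in\RR^2$ such that
$$\gamma''(t)+k\gamma(t)=kC,$$
and this is a second-order linear ODE with constant coefficients. Since $k>0$, its general solution takes the form
$$\gamma(t)=C+A\cos(\sqrt{k}\,t)+B\sin(\sqrt{k}\,t),\qquad A,B\in\RR^2.$$

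Next, I would verify that $A$ and $B$ are linearly independent by exploiting the affine arclength normalization $[\gamma',\gamma'']=1$. A direct expansion using the bilinearity and antisymmetry of the determinant gives
$$[\gamma'(t),\gamma''(t)]=k^{3/2}\,[A,B],$$
so the normalization forces $[A,B]=k^{-3/2}>0$. In particular, $(A,B)$ is a basis of $\RR^2$, and thus the image of $\gamma$ is the affine image of the unit circle under the map $(\cos\theta,\sin\theta)\mapsto C+A\cos\theta+B\sin\theta$, which is precisely an ellipse centered at $C$ with conjugate semi-diameters $A$ and $B$.

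Finally, I would close the loop by checking the periodicity. The trigonometric solution is $\frac{2\pi}{\sqrt{k}}$-periodic in $t$, which is compatible with the fact (from Appendix \ref{section:affine_parametrization}) that an affine arclength parametrization of a closed strictly convex curve is $L$-periodic; indeed, this simply recovers the classical formula $L=2\pi/\sqrt{k}$ for the affine perimeter of an ellipse. There is no substantive obstacle here: the only mildly non-routine step is the determinant computation in the previous paragraph, but it is immediate from $\sin^2+\cos^2=1$ and the antisymmetry of $[\,\cdot\,,\,\cdot\,]$. The hypothesis $k>0$ is used essentially, both to extract the real exponent $\sqrt{k}$ giving oscillatory (hence bounded and closed) solutions and to guarantee that the conjugate diameters are genuinely independent.
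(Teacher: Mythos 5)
Your proof is correct and follows essentially the same route as the paper: solve the constant-coefficient ODE $\gamma^{(3)}=-k\gamma'$ explicitly to get $\gamma(t)=C+A\cos(\sqrt{k}\,t)+B\sin(\sqrt{k}\,t)$ and recognize the image as an ellipse. Your additional check that $[A,B]=k^{-3/2}\neq 0$ (hence nondegeneracy) is a detail the paper leaves implicit, but it does not change the argument.
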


\begin{proof}
If $\gamma(t)$ is a parametrization of $\partial\Omega$ by affine arclength and $k > 0$ a constant, the following linear differential equation
$$
\gamma^{(3)}(t) = -k\gamma'(t)
$$
can be solved explicitly: there exist vectors $u,v,w\in\RR^2$ such that for all $t\in\RR$
$$
\gamma(t) = \cos(\sqrt{k}t)u+\sin(\sqrt{k}t)v+w.
$$
This implies that $\partial\Omega$ is an ellipse.
\end{proof}
The converse is a simple exercise.

\section{Symplectic billiard in an ellipse}
\label{section:billiard_ellipse}

Let $\mathscr E$ be an ellipse of affine curvature $k>0$ which bounds a domain $\Omega$. Assume that $\mathscr E$ is parametrized with respect to affine arclength by $\gamma(t)$. The following proposition shows that the affine arclength can be considered as an angle coordinate in action angle coordinates for the symplectic billiard map on an ellipse.

\begin{proposition}
\label{proposition:consecutive_points_ellipse}
Given $(t,\varepsilon)\in\RR^2$, the points
$$
\gamma(t-\varepsilon),\gamma(t),\gamma(t+\varepsilon)
$$
 on $\mathscr E$ constitute successive reflection points of the symplectic billiard map on $\Omega$. Namely, the line $\gamma(t-\varepsilon)\gamma(t+\varepsilon)$ is parallel to the tangent line of $\mathscr E$ at $\gamma(t)$.
\end{proposition}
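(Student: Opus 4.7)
The plan is to exploit the explicit form of the affine arclength parametrization of an ellipse, as already derived in the proof of Proposition~\ref{Constantcurvature}. There it is shown that since $\mathscr{E}$ has constant affine curvature $k>0$, the third-order ODE $\gamma^{(3)} = -k\gamma'$ forces
\[
\gamma(t) = \cos(\sqrt{k}\,t)\,u + \sin(\sqrt{k}\,t)\,v + w
\]
for fixed vectors $u,v,w \in \RR^2$. The whole proposition will then reduce to a one-line trigonometric identity.

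First, I would compute the chord vector $\gamma(t+\varepsilon) - \gamma(t-\varepsilon)$ by substituting the closed-form expression above. Using the sum-to-product formulas $\cos(a+b) - \cos(a-b) = -2\sin a \sin b$ and $\sin(a+b) - \sin(a-b) = 2\cos a \sin b$ with $a = \sqrt{k}\,t$, $b = \sqrt{k}\,\varepsilon$, one obtains
\[
\gamma(t+\varepsilon) - \gamma(t-\varepsilon) = 2\sin(\sqrt{k}\,\varepsilon)\bigl[-\sin(\sqrt{k}\,t)\,u + \cos(\sqrt{k}\,t)\,v\bigr].
\]
On the other hand, differentiating the parametrization once gives
\[
\gamma'(t) = \sqrt{k}\bigl[-\sin(\sqrt{k}\,t)\,u + \cos(\sqrt{k}\,t)\,v\bigr],
\]
so the chord vector equals $\tfrac{2\sin(\sqrt{k}\,\varepsilon)}{\sqrt{k}}\,\gamma'(t)$, proving it is proportional to the tangent vector to $\mathscr{E}$ at $\gamma(t)$, which is the required condition for the symplectic reflection law at $\gamma(t)$.

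I do not foresee any serious obstacle: the only subtlety is checking that one is free to use the closed-form representation of $\gamma$ from Proposition~\ref{Constantcurvature}, which requires knowing that $\mathscr{E}$ is an ellipse (and hence has constant positive affine curvature, so that the ODE integrates to trigonometric functions rather than exponentials or polynomials). Once this is in hand, the symplectic reflection condition — namely that $\gamma(t-\varepsilon)\gamma(t+\varepsilon)$ be parallel to $T_{\gamma(t)}\mathscr{E}$ — reduces to showing that the chord and the tangent are colinear, which follows directly from the calculation above. One may also remark that the proportionality factor $\tfrac{2\sin(\sqrt{k}\,\varepsilon)}{\sqrt{k}}$ vanishes exactly when $\sqrt{k}\,\varepsilon \in \pi\ZZ$, corresponding to the degenerate case where $\gamma(t-\varepsilon) = \gamma(t+\varepsilon)$; this is consistent with the continuous extension of $B_\Omega$ to the boundary of phase space recalled after \eqref{Bsymmap}.
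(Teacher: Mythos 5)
Your proof is correct and takes essentially the same approach as the paper: both exploit the trigonometric affine arclength parametrization of the ellipse and reduce the claim to a sum-to-product identity. The only (cosmetic) difference is that you work with the coordinate-free solution $\gamma(t)=\cos(\sqrt{k}t)u+\sin(\sqrt{k}t)v+w$ of $\gamma^{(3)}=-k\gamma'$ and exhibit the chord as an explicit positive multiple of $\gamma'(t)$, whereas the paper fixes axis-aligned coordinates $(a\cos(\sqrt{k}t),b\sin(\sqrt{k}t))$ and checks that the relevant determinant vanishes.
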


\begin{proof}
By changing coordinates on $\RR^2$, we may assume that $\gamma(t)$ is given by 
$$
\gamma(t) = (a\cos(\sqrt k t),b\sin(\sqrt k t)),
$$
where $a,b>0$ satisfy $ab=k ^{-3/2}$ (a necessary condition for $\gamma$ to be parametrized by affine arclength). We must show that 
$$\det(\gamma(t+\varepsilon)-\gamma(t-\varepsilon),\gamma'(t)) = 0.$$
Indeed, 
\begin{multline}
\label{equation:determinant_ellipse}
\det(\gamma(t+\varepsilon)-\gamma(t-\varepsilon),\gamma'(t))=\\
\left|\begin{matrix}
a\cos(\sqrt k (t+\varepsilon))-a\cos(\sqrt k (t-\varepsilon))&-a\sqrt k\sin(\sqrt k t)\\
b\sin(\sqrt k (t+\varepsilon))-b\sin(\sqrt k (t-\varepsilon))&b\sqrt k\cos(\sqrt k t)\\
\end{matrix}\right|
\end{multline}
To simplify, write ${c}^+$ (resp. $s^+$) instead of $\cos \sqrt k (t+\varepsilon)$ (resp. $\sin \sqrt k (t+\varepsilon)$), ${c}^-$ (resp. $s^-$) in place of $\cos \sqrt k (t-\varepsilon)$ (resp. $\sin \sqrt{k} (t - \varepsilon)$), and $c$ (resp. $s$) for $\cos \sqrt k t$ (resp. $\sin \sqrt{k} t$). Equation \eqref{equation:determinant_ellipse} can then be written as
$$
ab\sqrt k\left(c \cdot(c^+-c^-)
+ s \cdot(s^+-s^-)\right).
$$
Using trigonometric formula 
$$
\cos(x-y)=\cos x\cos y+\sin x\sin y,
$$
we see that the quantity \eqref{equation:determinant_ellipse} equals
$$
ab\sqrt k(\cos(\sqrt k\varepsilon)-\cos(-\sqrt k\varepsilon)) = 0,
$$
from which the proposition follows.
\end{proof}

It follows that periodic symplectic billiard orbits in $\Omega$ equidistribute (with respect to the affine arclength parameter), as described in the following.

\begin{corollary}
\label{proposition:equidistribution_orbits}
Assume that $\Omega$ is bounded by an ellipse which is parametrized by an affine arclength coordinate $t$ on the boundary, vanishing at some point on its major axis. Then for any integer $q>0$, the polygon whose successive vertices are given in affine arclength coordinates by $(t_0,\ldots,t_{q-1})$ with
$$
t_j = \frac{2\pi}{\sqrt{k}}\frac{j}{q}\qquad\qquad j=0,\ldots,q-1
$$
constitutes a $q$-periodic symplectic billiard orbit in $\Omega$.
\end{corollary}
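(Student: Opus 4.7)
The plan is to reduce this directly to Proposition \ref{proposition:consecutive_points_ellipse}, which tells us exactly when three points of the form $\gamma(t-\varepsilon),\gamma(t),\gamma(t+\varepsilon)$ on the ellipse are consecutive hits of the symplectic billiard map. So the whole content of the corollary is really just a matter of setting $\varepsilon$ correctly and checking closure after $q$ steps.

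First, I would fix the affine arclength parametrization $\gamma(t)=(a\cos(\sqrt{k}\,t),b\sin(\sqrt{k}\,t))$ (with $ab=k^{-3/2}$) of the ellipse used in the proof of Proposition \ref{proposition:consecutive_points_ellipse}; the assumption that $t=0$ corresponds to a point on the major axis just fixes the phase. Because $\gamma$ has minimal period $2\pi/\sqrt{k}$, the affine perimeter of $\partial\Omega$ is
\[
L=\frac{2\pi}{\sqrt{k}}.
\]
Define the spacing $\varepsilon:=L/q=2\pi/(\sqrt{k}\,q)$, so the proposed vertices are $t_j=j\varepsilon$ for $j=0,\ldots,q-1$. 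For each $j$ (taken cyclically modulo $q$), the three successive vertices $\gamma(t_{j-1}),\gamma(t_j),\gamma(t_{j+1})$ are of the form $\gamma(t_j-\varepsilon),\gamma(t_j),\gamma(t_j+\varepsilon)$, which by Proposition \ref{proposition:consecutive_points_ellipse} means that the chord $\gamma(t_{j-1})\gamma(t_{j+1})$ is parallel to the tangent to $\mathscr E$ at $\gamma(t_j)$. This is precisely the symplectic reflection law at $\gamma(t_j)$, so $B_{\Omega}(t_{j-1},t_j)=(t_j,t_{j+1})$ for every $j$.

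Finally, since $t_q=q\varepsilon=L$ and $\gamma$ is $L$-periodic, we have $\gamma(t_q)=\gamma(t_0)$ and $\gamma(t_{q+1})=\gamma(t_1)$, so the orbit closes up after exactly $q$ iterations; this is a $q$-periodic orbit of rotation number $1/q$. There is essentially no obstacle here: the whole argument is a bookkeeping check that the two ingredients \textemdash\ the local reflection condition from Proposition \ref{proposition:consecutive_points_ellipse} and the $L$-periodicity of $\gamma$ \textemdash\ fit together, with $L=2\pi/\sqrt{k}$ being the only quantity that needs to be matched to the spacing $\varepsilon=L/q$.
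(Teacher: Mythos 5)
Your proof is correct and follows essentially the same route as the paper's: introduce the spacing $\varepsilon=2\pi/(\sqrt{k}\,q)$, observe that $t_j\pm\varepsilon=t_{j\pm1}$ (indices modulo $q$), and apply Proposition \ref{proposition:consecutive_points_ellipse} to each consecutive triple, with periodicity of $\gamma$ closing the orbit. The only addition is your explicit identification of the affine perimeter as $L=2\pi/\sqrt{k}$, which the paper leaves implicit.
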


\begin{proof}
For a fixed $q$, consider any integer $j\geq 0$, viewed modulo $q$. Introduce
$$
\varepsilon := \frac{2\pi}{q\sqrt{k}}.
$$ 
Considering also $j+1$ and $j-1$ modulo $q$, we compute 
$$
t_j-\varepsilon = t_{j-1}
\qquad\text{and}\qquad
t_j+\varepsilon = t_{j+1},
$$
so that according to Proposition \ref{proposition:consecutive_points_ellipse}, $\gamma(t_{j-1})$, $\gamma(t_{j})$ and $\gamma(t_{j+1})$ are consecutive points of the symplectic billiard in $\Omega$.
\end{proof}

\section{Estimates of periodic orbits}
\label{section:proof_estimates}

This section is devoted to the proof of Proposition \ref{proposition:perturbative_regime}. Fix a strictly convex bounded axially symmetric domain $\Omega$ with $\class^r$-smooth boundary. The heart of the proof amounts to estimating the quantities $t_k^{(q)}$, which are the affine arclength coordinates of impact points of a $q$-periodic orbit in Equation \eqref{equation:a_q}, defined for all integers $q\geq 2$ and $k \in\{0,\ldots,q-1\}$. In what follows, we assume that the boundary $\partial\Omega$ is parametrized with respect to affine arclength by a curve $\gamma(t)$, $t\in\RR/L\ZZ$, where $L>0$ is the affine perimeter of $\partial\Omega$.

\subsection{Lazutkin coordinates for the symplectic billiard map}

The symplectic billiard map inside $\Omega$ defines a map 
$$
(\overline t_1,\overline t_2)\in\mathcal P\mapsto(\overline t_2,\overline t_3)\in\mathcal P,
$$
where $\overline t_1$, $\overline t_2$ and $\overline t_3$ are the affine arclength coordinates of three consecutive impact points and $\mathcal P$ is the appropriately coordinatized phase space for the symplectic billiard map (see \cite{AlbersTabachnikov} for more details). Following \cite{AlbersTabachnikov}, we consider the pair $(t,\varepsilon)$ defined by
$$t = \overline t_1
\qquad\text{and}\qquad 
\varepsilon=\overline t_2-\overline t_1.$$
It defines new coordinates in which the symplectic billiard map writes as
$$(t,\varepsilon)\mapsto(t_1,\varepsilon_1).$$

\begin{proposition}
\label{proposition:estimates_symplectic_billiard_map}
Assume $\Omega$ is a domain with $\mathscr C^7$-smooth boundary. In $(t,\varepsilon)$-coordinates, the symplectic billiard map admits the following asymptotic expansion
\begin{equation}
\label{equation:billiard_map_asymptotics}
\left\{\begin{array}{rcl}
t_1&=&t+\varepsilon\\
\varepsilon_1&=&\varepsilon+\frac{1}{30}k'(t)\varepsilon^4+R_{\Omega}(\varepsilon)\varepsilon^6\\
\end{array}\right.
\end{equation}
where $k$ is the affine curvature of $\partial\Omega$ and $R_{\Omega}(\varepsilon)$ is a uniformly bounded remainder which converges to $0$ as $\partial\Omega$ converges to an ellipse in the $\mathscr C^7$-topology (i.e., as the affine curvature tends to a positive constant).
\end{proposition}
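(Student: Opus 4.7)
The plan is to derive the asymptotic expansion of $\varepsilon_1$ directly from the symplectic reflection law at $\gamma(t+\varepsilon)$, combined with the Taylor expansion of $\gamma$ at $t$ and the affine parametrization identity $\gamma'''=-k\gamma'$ from Proposition \ref{proposition:derivatives_gamma}. The first equation $t_1=t+\varepsilon$ is tautological from the very definition of the $(t,\varepsilon)$-coordinates. For the second, the reflection condition reads
$$\det\bigl(\gamma(t+\varepsilon+\varepsilon_1)-\gamma(t),\; \gamma'(t+\varepsilon)\bigr)=0,$$
which implicitly defines $\varepsilon_1$ as a smooth function of $(t,\varepsilon)$, the linearization with respect to $\varepsilon_1$ being non-degenerate away from the boundary of phase space.

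I would then expand both vectors in the symplectic basis $(\gamma'(t),\gamma''(t))$, which is symplectic because $\det(\gamma',\gamma'')=1$ for an affine arclength parametrization. Writing $\gamma^{(n)}(t)=a_n(t)\gamma'(t)+b_n(t)\gamma''(t)$, the relation $\gamma'''=-k\gamma'$ yields the recursion $a_{n+1}=a_n'-kb_n$, $b_{n+1}=a_n+b_n'$, which determines $a_n,b_n$ as explicit polynomials in $k(t),k'(t),k''(t),\ldots$. Setting $\delta=\varepsilon+\varepsilon_1$, the reflection condition reduces to a single scalar equation of the form $C(\delta)B(\varepsilon)-D(\delta)A(\varepsilon)=0$, where $A,B,C,D$ are explicit power series in their arguments whose coefficients involve $k$ and its derivatives.

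By Proposition \ref{proposition:consecutive_points_ellipse}, when $k$ is constant (ellipse case) the exact solution is $\delta=2\varepsilon$. For a general $\Omega$ I would write $\delta=2\varepsilon+\eta$ and expand the scalar equation in powers of $\varepsilon$. The leading dependence on $\eta$ is $-\varepsilon\eta$, so matching coefficients order by order in $\varepsilon$ yields $\eta$ as a formal power series whose coefficients are explicit polynomials in $k$ and its derivatives. The key verification is that at order $\varepsilon^4$ all purely-$k$ contributions cancel (consistent with the ellipse satisfying $\varepsilon_1=\varepsilon$ identically), and that the first non-vanishing residue has coefficient $k'(t)/30$, giving the stated main term $\frac{1}{30}k'(t)\varepsilon^4$.

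To upgrade this formal expansion to the stated asymptotic with a uniformly bounded remainder, I would view the reflection equation as an equation in $\eta$ and apply a quantitative implicit function argument: $\mathscr{C}^7$-smoothness of $\gamma$ controls $k,k',\ldots,k^{(4)}$ uniformly on $\partial\Omega$, and the explicit polynomial form of the higher-order coefficients in the expansion of $\eta$ then yields the required uniform bound on $R_\Omega(\varepsilon)$. The convergence $R_\Omega\to 0$ as $\partial\Omega$ tends to an ellipse in the $\mathscr{C}^7$ topology follows from the fact that all positive-order derivatives of $k$ vanish in this limit, so each coefficient beyond the constant one tends uniformly to zero. The main obstacle is the combinatorial bookkeeping at intermediate orders; the cancellation at order $\varepsilon^4$ in particular is delicate and serves as a sensitive consistency check on the explicit recursion, as an incorrect coefficient there would immediately contradict the ellipse rigidity statement.
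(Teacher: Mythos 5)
Your overall strategy is the same as the paper's: write the reflection law as the vanishing of $\det\bigl(\gamma(t+\varepsilon+\varepsilon_1)-\gamma(t),\gamma'(t+\varepsilon)\bigr)$, solve for $\varepsilon_1$ by the implicit function theorem, and Taylor-expand using $[\gamma',\gamma'']=1$ and $\gamma'''=-k\gamma'$; your recursion $a_{n+1}=a_n'-kb_n$, $b_{n+1}=a_n+b_n'$ is correct, as is the order-$\varepsilon^4$ cancellation and the coefficient $k'/30$. But there are two gaps. First, your non-degeneracy remark is aimed at the wrong regime: the expansion is needed precisely at the glancing boundary $\varepsilon\to 0$, where the $\varepsilon_1$-derivative of the raw determinant vanishes (the determinant itself is $O\bigl((\varepsilon+\varepsilon_1)\varepsilon_1\bigr)$ near the diagonal), so ``non-degenerate away from the boundary of phase space'' gives nothing where you need it. The paper normalizes by dividing by $\varepsilon+\varepsilon_1$, obtaining a functional $F_\gamma$ with $\partial_{\varepsilon_1}F_\gamma(0,0)=-1/2$, and applies the implicit function theorem (uniformly in $\gamma$, via its Banach-space version) to \emph{that}; this is also what produces a remainder controlled by the $7$-jet of $\gamma$ and hence the uniform convergence $R_\Omega\to 0$. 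Your closing appeal to a ``quantitative implicit function argument'' presupposes exactly this normalization, which you must make explicit.

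Second, and more substantively: to justify a remainder of size $\varepsilon^6$ you must show the $\varepsilon^5$ coefficient vanishes, and you never address that order. With your expansion centered at the first point $t$, it does \emph{not} vanish: carrying your scheme one order further yields $\varepsilon_1=\varepsilon+\tfrac{1}{30}k'(t)\varepsilon^4+\tfrac{1}{30}k''(t)\varepsilon^5+O(\varepsilon^6)$, i.e.\ the clean form of the expansion has $k'$ evaluated at the \emph{middle} impact point $t+\varepsilon$. The paper obtains the vanishing of the odd coefficients ($a_3=a_5=0$) essentially for free by centering the expansion at the midpoint $t_1=t+\varepsilon$ and using the integral representation $\int_0^1\gamma'\bigl((\varepsilon+\varepsilon_1)\theta+t_1-\varepsilon\bigr)\,d\theta$ together with $\int_0^1(2\theta-1)^{2j+1}\,d\theta=0$. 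If you keep your centering, you must either compute the $\varepsilon^5$ term explicitly and absorb it by re-expanding $k'$ at the midpoint, or re-center the expansion as the paper does; as written, the assertion that the expansion stops at $\tfrac{1}{30}k'(t)\varepsilon^4$ with an $O(\varepsilon^6)$ error does not follow from your computation.
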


\begin{proof}
The proof is inspired by \cite{AlbersTabachnikov}. Consider an affine arclength parametrization $\gamma(t)$ of $\partial\Omega$ and assume that the affine perimeter is $L=1$. Let 
$$
t=t_1-\varepsilon,\quad t_1, \text{ and}\quad t_2=t_1+\varepsilon_1
$$
be the parameters of three successive points of reflection on $\partial\Omega$ corresponding to a symplectic billiard orbit. They satisfy
$$
[\gamma(t_1+\varepsilon_1)-\gamma(t_1-\varepsilon),\gamma'(t_1)]=0,
$$
where $[u\,|\,v]$ denotes the determinant of the vectors $u,v \in \RR^2$. 
\\
\\
We first justify regularity properties of the symplectic billiard map in a neighborhood of $\varepsilon=0$ by means of the implicit function theorem applied to a suitably chosen functional. Fix $t_1\in\RR$ and consider the functional $F$ given by 
$$
F_{\gamma}(\varepsilon,\varepsilon_1) = \frac{1}{\varepsilon_1+\varepsilon}[\gamma(t_1+\varepsilon_1)-\gamma(t_1-\varepsilon),\gamma'(t_1)].
$$
Note that if $\gamma$ is $\mathscr C^r$-smooth, then $F_{\gamma}$ is $\mathscr C^{r-1}$-smooth as one can see by writing $\gamma(t_1+\varepsilon_1)-\gamma(t_1-\varepsilon) = (\varepsilon_1+\varepsilon)\int_0^1\gamma'((\varepsilon_1+\varepsilon)\theta+t_1-\varepsilon)d\theta$. Moreover, it satisfies $\partial_{\varepsilon_1}F_{\gamma}(0,0) =-1/2\neq 0$. By the implicit function theorem, there exist a $\mathscr C^{r-1}$-smooth map $\varphi_{\gamma}:I\to\RR$, where $I$ is a small interval containing $0$, such that $F_{\gamma}(\varepsilon,\varepsilon_1) = 0$ if and only if $\varepsilon_1=\varphi_{\gamma}(\varepsilon)$. As a consequence of the implicit function Theorem for Banach spaces, it follows that $\varphi$ is continuous in $(\gamma,\varepsilon)$, where $\gamma$ is considered as an element of the set of $\mathscr C^r$-smooth maps endowed with the corresponding topology. In particular, we have an expansion of the symplectic billiard map $(t,\varepsilon)\mapsto(t_1=t+\varepsilon,\varepsilon_1)$ given by
$$
\varepsilon_1 = a_0+a_1\varepsilon+\ldots+\ldots+a_{r-2}\varepsilon^{r-2}+R_{\Omega}^{r-1}(\varepsilon)\varepsilon^{r-1},
$$
where each $a_k$ is a real number depending on the $k$-jet of $F_{\gamma}$ at $(0,0)$ and $R_{\Omega}^{r-1}(\varepsilon)$ is a real valued remainder depending only on the $(r-1)$-jet of $F_{\gamma}$ restricted to the graph of $\varphi_{\gamma}$. In particular, $R_{\Omega}^{r-1}$ depends on the $r$-jet of $\gamma$ on the graph of $\varphi_{\gamma}$. If $\Omega$ is bounded by an ellipse, then $\varepsilon_1=\varepsilon$. It follows that if $\partial\Omega$ converges to an ellipse in the $\mathscr C^r$-topology, then $R_{\Omega}^{r-1}$ converges uniformly to $0$.
\\
\\
Assume that $r=7$. We now compute $a_0,\ldots,a_5$ by asymptotically expanding $F_{\gamma}$. It is clear that $a_0=0$ by continuity, since the glancing region (with zero angle of reflection) corresponds to fixed points of the symplectic billiard map. To compute $a_1$, let us write
\begin{align*}
\int_0^1\gamma'((\varepsilon_1+\varepsilon)\theta+t_1-\varepsilon)d\theta &=\int_0^1\gamma'(t_1)+((a_1+1)\theta-1)\varepsilon\gamma''(t_1)+\bigo{\varepsilon^2}\\
&=\gamma'(t_1)+\frac{1}{2}(a_1-1)\varepsilon\gamma''(t_1)+\bigo{\varepsilon^2}.
\end{align*}
Since $\det(\gamma'(t_1),\gamma''(t_1))=1$, the equality $F_{\gamma}(\varepsilon,\varepsilon_1)=0$ implies that $a_1=1$. Let us now show that $a_2=a_3=0$ by expanding to fourth order. To simplify, we omit the parameter $t_1$. Note also that $\det(\gamma^{(3)},\gamma')=0$ and hence, it is unnecessary to compute the coefficient multiplying $\gamma^{(3)}$. Using the identity $\int_0^1(2\theta-1)d\theta=\int_0^1(2\theta-1)^3d\theta=0$, we obtain
\begin{multline*}
\int_0^1\gamma'((\varepsilon_1+\varepsilon)\theta+t_1-\varepsilon)d\theta=\\
\int_0^1\gamma'+(2\theta-1)\varepsilon\gamma''
+(\theta a_2\varepsilon^2+\theta a_3\varepsilon^3)\gamma''
+\frac{1}{6}(2\theta-1)^3\varepsilon^3\gamma^{(4)}+b_{\varepsilon}(\theta)\gamma^{(3)}+\bigo{\varepsilon^4}\\
=\gamma'+\frac{1}{2}(a_2\varepsilon^2+a_3\varepsilon^3)\gamma''
+\tilde b_{\varepsilon}\gamma^{(3)}+\bigo{\varepsilon^4},
\end{multline*}
where $b$ and $\tilde b$ are maps depending on $(\varepsilon,\theta)$ and $\varepsilon$ respectively. Considering that $\det(\gamma'',\gamma')=-1$ and $\det(\gamma^{(3)},\gamma')=0$, we now compute
$$
F_{\gamma}(\varepsilon,\varepsilon_1) = -\frac{1}{2}a_2\varepsilon^2-\frac{1}{2}a_3\varepsilon^3+\bigo{\varepsilon^4}.
$$
The condition $F_{\gamma}(\varepsilon,\varepsilon_1)=0$ immediately implies that $a_2=a_3=0$. Let us finally compute $a_4$ and $a_5$ by writing
$$
(\varepsilon_1+\varepsilon)\theta+t_1-\varepsilon = 
t_1+(2\theta-1)\varepsilon+\theta a_4\varepsilon^4+\theta a_5\varepsilon^5
+\bigo{\varepsilon^6}
$$
and then expanding
\begin{multline*}
\int_0^1\gamma'((\varepsilon_1+\varepsilon)\theta+t_1-\varepsilon)d\theta=
\gamma'+\frac{1}{2}(a_4\varepsilon^4+a_5\varepsilon^5)\gamma''+c_{\varepsilon}\gamma^{(3)}\\
+\frac{1}{120}\varepsilon^4\gamma^{(5)}+\bigo{\varepsilon^6},
\end{multline*}
where $c$ is a function depending on $\varepsilon$. Therefore, $F_{\gamma}(\varepsilon,\varepsilon_1)$ is given by
$$
F_{\gamma}(\varepsilon,\varepsilon_1) = -\frac{1}{2}(a_4\varepsilon^4+a_5\varepsilon^5)+\frac{1}{120}\varepsilon^4\det(\gamma^{(5)},\gamma')+\bigo{\varepsilon^6}.
$$
Hence,
$$
a_4=\frac{1}{60}\det(\gamma^{(5)},\gamma')
\qquad\text{and}\qquad
a_5=0.
$$
Differentiating twice the equality $\gamma^{(3)} = -k\gamma$, we obtain $\det(\gamma^{(5)},\gamma') = 2k'$, which implies that
$$
a_4=\frac{1}{30}k',
$$
finishing the proof.
\end{proof}

\subsection{Estimates of $q$-periodic symmetric orbits}

Now consider an axially symmetric domain $\Omega$ whose boundary is parametrized by an affine arclength coordinate $t$ such that $t=0$ corresponds to a point on the axis of symmetry. Given $q\geq 2$, consider the coordinates 
$$(t_0=0,t_1,\ldots,t_{q-1})$$
of consecutive points $p_0,\ldots,p_{q-1}$ on the boundary $\partial\Omega$ such that $p_0\ldots p_{q-1}$ form a polygon of maximal area. We omit the dependence on $q$ of the $t_j$'s and estimate their asymptotic behavior, as well that of each $\varepsilon_j=t_{j+1}-t_j$. These estimates were first computed in \cite[Proposition 4.3]
{BaraccoBernardiNardi}, but we give alternate proof with the additional result that the remainder tends to $0$ for domains which are $\mathscr C^7$-close to ellipses.

\begin{proposition}[See \cite{BaraccoBernardiNardi} Proposition 4.3]
\label{proposition:estimates_glancing_orbit}
Assume that $\partial\Omega$ is $\mathscr C^7$-close to an ellipse $\mathscr E$. Then
$$t_j=L\frac{j}{q}+\frac{1}{q^2}a_0(j/q)+\frac{1}{q^3}a_1(j/q)+\frac{R^0_{\Omega}(j,q)}{q^4},$$
$$\varepsilon_j=\frac{L}{q}+\frac{1}{q^3}b_0(j/q)+\frac{1}{q^4}b_1(j/q)
+\frac{R^1_{\Omega}(j,q)}{q^5},$$
where $L$ is the affine perimeter of $\partial\Omega$, 
$R^0_{\Omega}(j,q)$ and $R^1_{\Omega}(j,q)$ are bounded remainders which converge uniformly to $0$ as $\Omega$ converges to $\mathscr E$ in the $\mathscr C^7$-topology, and
$$
a_0(t) = \frac{L^2}{30}\int_0^{Lt}k(\theta)-(k)_0d\theta,
\qquad
a_1(t) = -\frac{L^3}{30}(k(Lt)-k(0)),
$$
$$b_0(t) = \frac{L^3}{30}(k(Lt)-(k)_0),\qquad b_1(t) = -\frac{L^4}{60}k'(Lt),$$
where $k$ is the affine curvature of $\partial\Omega$ with mean value 
$$(k)_0 = \frac{1}{L}\int_0^Lk(t)dt.$$
\end{proposition}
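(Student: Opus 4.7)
The plan is to combine the local asymptotic expansion of the symplectic billiard map given by Proposition \ref{proposition:estimates_symplectic_billiard_map} with a bootstrap argument in the small parameter $1/q$, closed by the periodicity condition $\sum_{j=0}^{q-1}\varepsilon_j = L$ and using the symmetry of $\Omega$ in a crucial way. Throughout, the marked initial vertex imposes $t_0 = 0$.

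First I would set $\varepsilon_j = L/q + \eta_j$ and use Equation \eqref{equation:billiard_map_asymptotics} to telescope the recursion into
$$\eta_m = \eta_0 + \frac{1}{30}\sum_{j=0}^{m-1}k'(t_j)\,\varepsilon_j^4 + \sum_{j=0}^{m-1}R_{\Omega}(\varepsilon_j)\,\varepsilon_j^6.$$
The last sum contributes only $\bigo{\|R_{\Omega}\|_{\infty}/q^5}$ because $\varepsilon_j = \bigo{1/q}$ and there are at most $q$ terms. A first bootstrap inserts the crude estimates $\varepsilon_j = L/q + \smo{1/q}$ and $t_j = Lj/q + \smo{1}$ on the right-hand side: replacing $\varepsilon_j^4$ by $(L/q)^4$ costs $\bigo{1/q^6}$ per term, and the Euler--Maclaurin expansion of the left-endpoint Riemann sum yields
$$\sum_{j=0}^{m-1}k'(Lj/q) = \frac{q}{L}\bigl(k(Lm/q) - k(0)\bigr) - \frac{1}{2}\bigl(k'(Lm/q) - k'(0)\bigr) + \bigo{1/q}.$$
Combining these gives
$$\eta_m - \eta_0 = \frac{L^3}{30q^3}\bigl(k(Lm/q)-k(0)\bigr) - \frac{L^4}{60q^4}\bigl(k'(Lm/q)-k'(0)\bigr) + \bigo{1/q^5}.$$

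Next I would close the system via $\sum_{j=0}^{q-1}\eta_j = 0$. Summing the previous expansion over $m$ and applying the Euler--Maclaurin approximation once more, the leading term produces $q\eta_0 = -\frac{L^3}{30q^2}((k)_0 - k(0)) + \bigo{1/q^3}$, while the $1/q^4$ correction contributes a term proportional to $k'(0)$. Here the axial symmetry of $\Omega$ enters decisively: the parametrization satisfies $\gamma(-t) = \sigma(\gamma(t))$ for the reflection $\sigma$, which forces the affine curvature $k$ to be even about $t=0$, hence $k'(0) = 0$. Thus $\eta_0 = \frac{L^3}{30q^3}(k(0)-(k)_0) + \bigo{1/q^4}$, and substituting back one obtains
$$\varepsilon_m = \frac{L}{q} + \frac{L^3}{30q^3}\bigl(k(Lm/q) - (k)_0\bigr) - \frac{L^4}{60q^4}k'(Lm/q) + \bigo{1/q^5},$$
matching the stated $b_0$ and $b_1$. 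The expansion for $t_j = \sum_{i<j}\varepsilon_i$ then follows from another Riemann summation: the primitive of $b_0$ recovers $a_0$, the trapezoidal correction combined with the primitive of $b_1$ produces $a_1$, and the compatibility identities $a_0' = b_0$, $a_1' + \tfrac{1}{2}a_0'' = b_1$ serve as a built-in consistency check. The symmetric relation $t_{q-j} = L - t_j$, also forced by the axial symmetry, can be verified a posteriori from the formulas for $a_0, a_1$ and provides a further cross-check.

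The main obstacle is the uniform control of the remainders $R^0_{\Omega}$ and $R^1_{\Omega}$ as $\Omega$ varies in a $\mathscr{C}^7$-neighborhood of $\mathscr{E}$ and $q \to \infty$. The crucial input is the uniformity statement in Proposition \ref{proposition:estimates_symplectic_billiard_map}: the remainder $R_{\Omega}(\varepsilon)$ is uniformly bounded on compact intervals of $\varepsilon$ and tends to $0$ as $\Omega \to \mathscr{E}$ in $\mathscr{C}^7$, so its contribution to the telescoped sum is automatically small. The Riemann-sum errors are controlled by uniform bounds on $k$ and its first two derivatives, which, by Proposition \ref{proposition:derivatives_gamma}, depend continuously on the $\mathscr{C}^5$-norm of the boundary and hence remain uniformly bounded along the $\mathscr{C}^7$-convergence. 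The somewhat delicate point is that the bootstrap must be carried out in the right order (first $\eta_m - \eta_0$ modulo $\bigo{1/q^5}$, then $\eta_0$ via closure, then $t_j$ by integration) to avoid circular dependencies between the error estimates.
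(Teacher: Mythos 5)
Your strategy is in substance the same as the paper's: both arguments feed the expansion of Proposition \ref{proposition:estimates_symplectic_billiard_map} into a bootstrap, close the system with $\sum_j\varepsilon_j=L$, and identify the coefficient functions (the paper encodes the identification in the ODEs $a_0'=b_0$, $a_1'=b_1-\tfrac12 b_0'$, $b_0'=\tfrac1{30}k'$, $b_1'=-\tfrac12 b_0''$, whereas you make the same computation explicit via telescoping and Euler--Maclaurin; your consistency identities and the trapezoidal correction producing $a_1$ do check out against the stated formulas). The one genuine gap is at the very start: you use the a priori bound $\varepsilon_j=\bigo{1/q}$ \emph{uniformly in $j$} as if it were free, but nothing in the setup guarantees that the area-maximizing symmetric $q$-gon has roughly equally spaced vertices, and without this bound the bootstrap cannot begin (the error terms $R_\Omega(\varepsilon_j)\varepsilon_j^6$ and the replacement $\varepsilon_j^4\to(L/q)^4$ are not controlled). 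The paper devotes Lemma \ref{lemma:rough_estimates_periodic} to exactly this point: one first finds a single index with $\varepsilon_{k_*}\leq L/q$ by pigeonhole, then propagates the bound to all indices by an inductive (Gr\"onwall-type) argument using $|\varepsilon_{k+1}-\varepsilon_k|\leq M\varepsilon_k^4$. You need to supply this step.

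Two smaller points of bookkeeping. First, you state $\eta_0=\tfrac{L^3}{30q^3}(k(0)-(k)_0)+\bigo{1/q^4}$, but an unknown $\bigo{1/q^4}$ error in $\eta_0$ would swamp the $b_1/q^4$ term you are trying to identify; your own computation, carried to the order you set up, actually yields $\eta_0$ to precision $\bigo{1/q^5}$, and you should say so. Second, the $k'(0)$ contribution you kill by invoking axial symmetry in fact cancels identically between $\eta_0$ and $\eta_m-\eta_0$ (the term $-\tfrac{L^4}{60q^4}k'(0)$ appears with opposite signs in the two pieces), so the symmetry is not needed at that particular step --- though the observation $k'(0)=0$ is correct and harmless. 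Finally, the uniformity of the remainders in $\Omega$ is handled as you describe, via the uniform convergence of $R_\Omega$ in Proposition \ref{proposition:estimates_symplectic_billiard_map}; this part matches the paper.
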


\begin{remark}
Note that $a_0$ $a_1$, $b_0$ and $b_1$ vanish if $\partial\Omega$ is an ellipse.
\end{remark}


In order to prove Proposition \ref{proposition:estimates_glancing_orbit}, we will need the following lemma on a preliminary bound for $\varepsilon_k$.

\begin{lemma}
\label{lemma:rough_estimates_periodic}
There exist $C=C(\Omega)>0$ such that for any $k\in\{0,\ldots,q-1\}$, the quantity $\varepsilon_k$ is bounded by
$$\varepsilon_k \leq \frac{C}{q}.$$
\end{lemma}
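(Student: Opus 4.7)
My plan is to prove the bound in three stages: first, a comparison with an equidistributed competitor shows that the total area lost by the maximal polygon is $O(1/q^2)$; second, a uniform cubic lower bound on the individual cap areas yields a preliminary estimate $\varepsilon_k = O(q^{-2/3})$; third, Proposition \ref{proposition:estimates_symplectic_billiard_map} bootstraps this to the sharp $O(1/q)$ bound.

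The first step uses the equidistributed symmetric polygon with affine arclength vertices $\gamma(jL/q)$, $j=0,\ldots,q-1$, which belongs to $\mathcal P^{(q)}$ because the axial symmetry acts on the affine parameter by $t \mapsto -t$ (equivalently $t \mapsto L-t$) and thus maps this polygon to itself. A Taylor expansion of $\gamma$ combined with $[\gamma',\gamma''] = 1$ shows that the cap cut off by any chord of affine arclength $\varepsilon \to 0$ has area of order $\varepsilon^3$, so the equidistributed polygon loses total area $O(L^3/q^2)$. Since $\underline p^{(\Omega,q)}$ is a maximizer within $\mathcal P^{(q)}$, the same upper bound holds for its total lost area and, \emph{a fortiori}, for each individual cap.

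For the second step, strict convexity and $\mathscr{C}^2$ smoothness of $\partial\Omega$ yield a uniform cubic lower bound $A_{\text{cap}}(t,t+\varepsilon) \geq c_0(\Omega)\,\varepsilon^3$ for $\varepsilon \in (0,L/2]$, together with $A_{\text{cap}} \geq c_1(\Omega) > 0$ for $\varepsilon \in (L/2, L)$. Combining this with step one forces each $\varepsilon_k$ into the range $(0,L/2]$ for $q$ larger than some $q_0(\Omega)$, and then yields $\varepsilon_k \leq C_2 q^{-2/3}$. In particular all $\varepsilon_k$ eventually enter the small regime where the twist expansion of Proposition \ref{proposition:estimates_symplectic_billiard_map} applies.

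Finally I would bootstrap. The expansion $\varepsilon_{k+1} = \varepsilon_k + k'(t_k)\varepsilon_k^4/30 + R_\Omega(\varepsilon_k)\varepsilon_k^6$ gives $|\varepsilon_{k+1} - \varepsilon_k| = O(\varepsilon_k^4) = O(q^{-8/3})$, so summing around the periodic orbit the total oscillation of the sequence $(\varepsilon_k)_k$ is $O(q^{-5/3})$. The identity $\sum_{k=0}^{q-1}\varepsilon_k = L$ fixes the mean at $L/q$, whence $\max_k \varepsilon_k \leq L/q + O(q^{-5/3}) \leq 2L/q$ for $q$ large. The finitely many values $q < q_0$ are absorbed via the trivial bound $\varepsilon_k < L$, yielding a single constant $C = C(\Omega)$. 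The main technical obstacle I anticipate is justifying the uniform cubic lower bound on the cap area in step two: the small-$\varepsilon$ regime is handled by the affine Taylor expansion and the range $[\delta, L-\delta]$ by continuity-compactness, but one must separately verify that $A_{\text{cap}}(t,t+\varepsilon)/\varepsilon^3$ does not degenerate as $\varepsilon \to L$, which ultimately reduces to comparing the cap to the full domain via strict convexity.
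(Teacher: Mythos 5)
Your argument is correct, but it takes a genuinely different route from the paper's. The paper's proof starts from the pigeonhole observation that $\sum_k \varepsilon_k = L$ forces a single index with $\varepsilon_{k_*} \leq L/q$, and then propagates this smallness around the whole orbit by a discrete Gronwall-type induction: the twist expansion of Proposition \ref{proposition:estimates_symplectic_billiard_map} gives $|\varepsilon_{k+1}-\varepsilon_k| \leq M\varepsilon_k^4$, and the inductive hypothesis $\varepsilon_k \leq \frac{M}{q}\left(1+\frac{1}{q}\right)^k$ both closes the induction and keeps each gap inside the domain of validity of the expansion. You instead extract a uniform preliminary bound on \emph{all} gaps simultaneously from the variational structure: comparison with the equidistributed competitor bounds the total lost area by $O(q^{-2})$, and the cubic lower bound on cap areas (which is standard in affine arclength, $A_{\mathrm{cap}} = \varepsilon^3/12 + O(\varepsilon^4)$; your worry about degeneration as $\varepsilon \to L$ is harmless because the cap area is monotone in $\varepsilon$ for fixed base point) converts this into $\varepsilon_k = O(q^{-2/3})$ for every $k$. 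A single total-variation estimate, $\sum_k|\varepsilon_{k+1}-\varepsilon_k| = O(q\cdot q^{-8/3}) = o(1/q)$, combined with the pigeonhole on the mean then gives $\max_k \varepsilon_k \leq L/q + O(q^{-5/3})$. Your route costs the extra cap-area lemma, but it buys two things: the applicability of the twist expansion is manifest for every $k$ before it is ever invoked (in the paper this is guaranteed only implicitly by the induction hypothesis), and the final step is a one-shot summation rather than an induction. Conversely, the paper's argument is slightly more general: pigeonhole plus propagation applies to any periodic orbit of rotation number $1/q$, whereas your first step leans essentially on the maximality of the polygon.
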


\begin{proof}
This proof is based on an analogue proof for classical billiards found in \cite{AvilaKaloshinSimoi}. Note however that some arguments there are missing. We will provide them here. As a consequence of Equation \ref{equation:billiard_map_asymptotics}, one obtains
$$
1=\varepsilon_0+\ldots+\varepsilon_{q-1}
$$
and hence, there is $k_{\ast}\in\{0,\ldots,q-1\}$ such that 
$$\varepsilon_{k_{\ast}}\leq \frac{1}{q}.$$
Since the orbit is cyclic, without loss of generality, we can assume that $k_{\ast}=0$. Again using Equation \ref{equation:billiard_map_asymptotics}, we deduce the existence of an $M=M(\Omega)>1$ such that for any $k\in\{0,\ldots,q-1\}$,
$$
|\varepsilon_{k+1}-\varepsilon_k|\leq M\varepsilon_{k}^4.
$$
Let us choose $q>0$ sufficiently large; in fact $q\geq q_0:=e^{3/2}M^2$ is enough. By induction on $k$, we see that for any $k\in\{0,\ldots,q-1\}$,
$$
\varepsilon_k\leq \frac{M}{q}\left(1+\frac{1}{q}\right)^k.
$$
It is obviously true for $k=0$ and if it is true for $k$, we estimate
$$
\varepsilon_{k+1}\leq\varepsilon_k+|\varepsilon_{k+1}-\varepsilon_k| \leq \varepsilon_{k}+M\varepsilon_{k}^4.
$$
Hence,
$$
\varepsilon_{k+1}\leq \frac{M}{q}\left(1+\frac{1}{q}\right)^k\left(1+\frac{M^4}{q^3}\left(1+\frac{1}{q}\right)^{3k}\right),
$$
where
$$
\frac{M^4}{q^3}\left(1+\frac{1}{q}\right)^{3k}\leq \frac{M^4e^3}{q^3}\leq \frac{1}{q}
$$
by our assumptions on $q$. This concludes the inductive step. In particular, the statement of Lemma \ref{lemma:rough_estimates_periodic} is true with $C=eM$ for $q\geq q_0$. To make the proof work for all $q$, we can take $C$ larger so that $\varepsilon_k\leq C/q$ is satisfied even if $q<q_0$; the latter only requires a finite set of inequalities to be satisfied.
\end{proof}

\begin{proof}[Proof of Proposition \ref{proposition:estimates_glancing_orbit}]
To prove the result, we will assume that the affine perimeter of $\Omega$ is $L=1$. The general case can be reduced this by homotethy of the domain. Substituting the inequality given in Lemma \ref{lemma:rough_estimates_periodic} into the expansion \eqref{equation:billiard_map_asymptotics}, we see that 
$$
\varepsilon_j=\varepsilon_0+\bigo{\frac{1}{q^3}}
$$ 
uniformly in $j$. Moreover, from the equality 
\begin{equation}
\label{equation:sum_epsilon}
\varepsilon_0+\ldots+\varepsilon_{q-1}=1,
\end{equation}
we deduce that
$$\varepsilon_j=\frac{1}{q}+\bigo{\frac{1}{q^3}}
\qquad\text{and}\qquad t_j = \frac{j}{q}+\bigo{\frac{1}{q^2}}$$
uniformly in $j$. Substituting again the two previous equations into \eqref{equation:billiard_map_asymptotics}, we obtain the existence of
$1$-periodic maps $a_0,a_1,b_0$ and $b_1$, with $a_0(0)=a_1(0)=0$ and bounded remainders $R^0_{\Omega}(j,q)$, $R^1_{\Omega}(j,q)$ which converge to $0$ uniformly as $\Omega$ converges to $\mathscr E$ in the $\mathscr C^7$-topology such that
$$
t_j=\frac{j}{q}+\frac{1}{q^2}a_0(j/q)+\frac{1}{q^3}a_1(j/q)+\frac{R^0_{\Omega}(j,q)}{q^4},
$$
$$
\varepsilon_j=\frac{1}{q}+\frac{1}{q^3}b_0(j/q)+\frac{1}{q^4}b_1(j/q)+\frac{R^1_{\Omega}(j,q)}{q^5}.
$$
By definition, the points of reflection satisfy
$$
\varepsilon_j = t_{j+1}-t_j,
$$
which implies that 
\begin{equation}
\label{equation:on_a}
a_0' = b_0
\qquad\text{and}\qquad
a_1'=b_1-\frac{1}{2}b_0'.
\end{equation}
We now compute $\varepsilon_{j+1}-\varepsilon_j$ in two different ways, first as
$$\varepsilon_{j+1}-\varepsilon_j = \frac{b_0'(j/q)}{q^4}+\frac{1}{q^5}\left(\frac{b_0''(j/q)}{2}+b_1'(j/q)\right)+\bigo{\frac{1}{q^6}}$$
and second, using the billiard map expansion given by Proposition \ref{proposition:estimates_symplectic_billiard_map}, as
$$\varepsilon_{j+1}-\varepsilon_j = \alpha(t_j)\varepsilon_j^4+\bigo{\varepsilon_j^6}=\frac{\alpha(j/q)}{q^4}+\bigo{\frac{1}{q^6}},$$
where $\alpha(t) = \frac{1}{30}k'(t)$. This leads to the equations 
\begin{equation}
\label{equation:on_b}
b_0'=\alpha
\qquad\text{and}\qquad
b_1'=-\frac{1}{2}b_0'' = -\frac{1}{2}\alpha'.
\end{equation}
The lemma follows by integrating Equations \eqref{equation:on_a} and \eqref{equation:on_b} and taking into account periodicity and initial conditions on the maps.
\end{proof}

We deduce the following asymptotic estimates.

\begin{lemma}
Assume that $\partial\Omega$ is $\mathscr C^7$-close to an ellipse $\mathscr E$ of affine perimeter $L_{\mathscr E}$ and affine curvature $k_{\mathscr E}$. We then have the following estimates for all $q>1$ and $j\in\{0,\ldots,q-1\}$:
\begin{equation}
\label{equation:estimates_rho_L}
\varrho\left(t_j\right)^{-1/3}L_j =
2k_{\mathscr E}^{-1/2}\sin\left(\frac{2\pi}{q}\right)+\frac{c_0}{q}+\frac{c_1(Lj/q)}{q^3}+\frac{c_2(Lj/q)}{q^4}+\frac{R_{\Omega}(j,q)}{q^5}
\end{equation}
where $R_{\Omega}(j,q)$ is a uniformly bounded remainder which converges to $0$ as $\Omega$ converges to $\mathscr E$ in the $\mathscr C^7$-topology,
\begin{align*}
    c_0 &= 2(L-L_{\mathscr E}),\\
    c_1(\theta) &= \frac{L^3}{3}(k_{\mathscr E}L_{\mathscr E}^3-k(\theta))+\frac{L^3}{15}L(k(\theta)-(k)_0)),\\
    c_2(\theta) &= -\frac{L^4}{15}k'(\theta),
\end{align*}
$L$ is the affine perimeter of $\partial\Omega$ and $k$ is its affine curvature.
\end{lemma}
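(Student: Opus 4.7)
The plan is to reduce the problem to computing the scalar coefficient $\alpha_{j,q}$ in the factorization $\gamma(t_{j+1}) - \gamma(t_{j-1}) = \alpha_{j,q}\,\gamma'(t_j)$, which is guaranteed by the symplectic reflection condition together with $\det(\gamma', \gamma'') = 1$. Since $\|\gamma'(t_j)\| = \varrho(t_j)^{1/3}$ by Proposition \ref{proposition:derivatives_gamma}, we have $\varrho(t_j)^{-1/3}L_j = \alpha_{j,q}$, so it suffices to expand $\alpha_{j,q}$ in powers of $1/q$ and compare to the expansion of $2k_{\mathscr E}^{-1/2}\sin(2\pi/q)$.

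First I Taylor-expand both $\gamma(t_j + \varepsilon_j)$ and $\gamma(t_j - \varepsilon_{j-1})$ at $t_j$ up to order five, and I substitute the identities $\gamma''' = -k\gamma'$, $\gamma^{(4)} = -k'\gamma' - k\gamma''$, $\gamma^{(5)} = (k^2 - k'')\gamma' - 2k'\gamma''$ evaluated at $t_j$. Reading off the $\gamma'$-component in the basis $(\gamma', \gamma'')$ yields
\begin{equation*}
\alpha_{j,q} = (\varepsilon_j + \varepsilon_{j-1}) - \frac{k(t_j)}{6}(\varepsilon_j^3 + \varepsilon_{j-1}^3) - \frac{k'(t_j)}{24}(\varepsilon_j^4 - \varepsilon_{j-1}^4) + \frac{(k^2 - k'')(t_j)}{120}(\varepsilon_j^5 + \varepsilon_{j-1}^5) + \bigo{\varepsilon^6}.
\end{equation*}
The $\gamma''$-component of the same expansion vanishes by the symplectic reflection condition and, as a consistency check, recovers exactly the relation $b_0'(\theta) = L^3 k'(\theta)/30$ implicit in Proposition \ref{proposition:estimates_glancing_orbit}.

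Next I substitute the estimates $\varepsilon_j = L/q + b_0(Lj/q)/q^3 + b_1(Lj/q)/q^4 + R^1_\Omega/q^5$ and its analogue for $\varepsilon_{j-1}$, the latter obtained by Taylor-expanding $b_0(\theta - L/q)$ and $b_1(\theta - L/q)$ about $\theta = Lj/q$. Using that $\varepsilon_j^4 - \varepsilon_{j-1}^4 = \bigo{1/q^7}$ and that the $\gamma^{(5)}$ contribution is $\bigo{1/q^5}$, the expression collapses to
\begin{equation*}
\alpha_{j,q} = \frac{2L}{q} + \frac{2b_0(\theta) - L^3 k(\theta)/3}{q^3} + \frac{-L b_0'(\theta) + 2b_1(\theta)}{q^4} + \frac{R_\Omega(j,q)}{q^5}, \qquad \theta = Lj/q.
\end{equation*}

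Finally I expand the ellipse leading term via $L_{\mathscr E}\sqrt{k_{\mathscr E}} = 2\pi$, obtaining $2k_{\mathscr E}^{-1/2}\sin(2\pi/q) = 2L_{\mathscr E}/q - k_{\mathscr E}L_{\mathscr E}^3/(3q^3) + \bigo{1/q^5}$, and subtract. Inserting the explicit forms $b_0(\theta) = L^3(k(\theta)-(k)_0)/30$ and $b_1(\theta) = -L^4 k'(\theta)/60$ then identifies $c_0$, $c_1$, $c_2$ as claimed. The remainder $R_\Omega(j,q)$ is uniformly bounded in $(j,q)$ and tends to $0$ as $\Omega \to \mathscr E$ in $\mathscr C^7$, since it inherits this behavior from the remainders $R^0_\Omega, R^1_\Omega$ of Proposition \ref{proposition:estimates_glancing_orbit} and from the remainder in Proposition \ref{proposition:estimates_symplectic_billiard_map}. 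The main obstacle is the bookkeeping of the $1/q^4$ contribution: one must combine $-Lb_0'(\theta)$ coming from $\varepsilon_j + \varepsilon_{j-1}$ with $2b_1(\theta)$, and then verify that the substitution of $b_0, b_1$ produces the clean expression $-L^4 k'(\theta)/15$ for $c_2$, which is where the most delicate cancellation occurs.
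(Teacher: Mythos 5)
Your argument is correct, and it takes a mildly but genuinely different route from the paper's. The paper expands $\gamma(t_{j+1})-\gamma(t_{j-1})$ about the equidistributed points $Lj/q$, splits it into tangential and normal components via Proposition \ref{proposition:derivatives_gamma}, computes $L_j$ as a Euclidean norm (the normal component being $\bigo{q^{-3}}$ and hence only affecting the norm at order $q^{-5}$), and then multiplies by a separate expansion of $\varrho(t_j)^{-1/3}$. You instead exploit the exact collinearity $\gamma(t_{j+1})-\gamma(t_{j-1})=\alpha_{j,q}\,\gamma'(t_j)$ furnished by the reflection law (the same identity the paper uses in Lemma \ref{lemma:differential_of_area}), which gives $\varrho(t_j)^{-1/3}L_j=\alpha_{j,q}$ exactly; this collapses the norm computation and the $\varrho^{-1/3}$ expansion into reading off a single Taylor coefficient in the frame $(\gamma',\gamma'')$ at $t_j$, with the vanishing of the $\gamma''$-component serving as a free consistency check on $b_0$. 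Your bookkeeping is right: $\varepsilon_j^4-\varepsilon_{j-1}^4=\bigo{q^{-7}}$ kills the $\gamma^{(4)}$ tangential term, $-Lb_0'+2b_1=-L^4k'/30-L^4k'/30=-L^4k'/15$ gives $c_2$, and $2b_0-L^3k/3$ together with the subtraction of $2L_{\mathscr E}/q-k_{\mathscr E}L_{\mathscr E}^3/(3q^3)$ gives $c_0$ and $c_1$ (your formula for $c_1$ and the paper's printed one agree under the normalization $L=1$ in which Proposition \ref{proposition:estimates_glancing_orbit} is actually proved; the two restorations of the powers of $L$ differ, but this does not affect the argument). Two points worth making explicit: the coefficients $k,k',k''$ in your expansion are evaluated at $t_j$ rather than at $Lj/q$, and since $t_j-Lj/q=\bigo{q^{-2}}$ while these coefficients first enter at order $q^{-3}$, the substitution error is absorbed into the $q^{-5}$ remainder; and for $j=0$ one reads $\varepsilon_{-1}=\varepsilon_{q-1}$ by periodicity of the orbit.
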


\begin{proof}
In the following, the exponent or index $\E$ will indicate that the quantity is related to the dynamics in the ellipse. We assume that the ellipse is parametrized by
$$
\gamma_\E(t) = (a\cos(\sqrt{k_{\mathscr E}}t),b\sin(\sqrt{k_{\mathscr E}}t)),
$$
where $a,b>0$ and $ab=k_{\mathscr E}^{-3/2}$. We compute its radius of curvature
$$
\varrho_\E(t) = k_{\mathscr E}^{3/2}(a^2\sin^2(\sqrt{k_{\mathscr E}}t)+b^2\cos^2(\sqrt{k_{\mathscr E}}t))^{3/2}
$$
and the following quantity:
\begin{multline}
\label{equation:factor_ellipse}
\varrho_\E(t_j^\E)^{-1/3}L_j^\E = \varrho_\E(t_j^\E)^{-1/3}\|\gamma_\E(t_{j+1}^\E)-\gamma_\E(t_{j-1}^\E)\| \\
= 2k_{\mathscr E}^{-1/2}\sin\left(\frac{2\pi}{q}\right)
= \frac{2}{q}L_{\mathscr E}-\frac{1}{3q^3}k_{\mathscr E}L_{\mathscr E}^3+\bigo{\frac{1}{q^5}}.
\end{multline}
Now consider a parametrization $\gamma$ of $\partial\Omega$ which is $\delta$-$\mathscr C^5$-close to $\gamma_0$. For simplicity, given an $L$-periodic map $f:\RR\to\RR$, we write $f=f(Lj/q)$, $f^+=f(L(j+1)/q)$ and $f^-=f(L(j-1)/q)$. Then 
$$
\gamma(t_{j+1})-\gamma(t_{j-1}) = \gamma^+-\gamma^-+\frac{2(a_0'\gamma'+La_0\gamma'')}{q^3}+\frac{2(a_1'\gamma'+La_1\gamma'')}{q^4}+\frac{R_q}{q^{5}},
$$
where $R_q$ is a functional depending on $\gamma^{(4)}$ and $a_0,a_1$ are given in Proposition \ref{proposition:estimates_glancing_orbit}. We first estimate
$$
\gamma^+-\gamma^- = \left(\frac{2L}{q}-\frac{kL^3}{3q^3}\right)\gamma'+\frac{R_q'}{q^{5}},
$$
where $R_q'$ is a functional depending on $\gamma^{(5)}$. Moreover, if $a$ is an $L$-periodic map,
$$
a'\gamma'+La\gamma'' = (a'+\frac{L}{3}\varrho'\varrho^{-1}a)\gamma'-La\varrho^{-1/3}N,
$$
where $N$ is the unit outward normal vector to $\partial\Omega$ at the corresponding point on $\d \Omega$. The latter equation follows from Proposition \ref{proposition:derivatives_gamma}. From the preceding equations, we deduce that
\begin{multline}
\gamma(t_{j+1})-\gamma(t_{j-1}) = \left(\frac{2L}{q}-\frac{kL^3}{3q^3}+\frac{1}{q^3}(a_0'+\frac{L}{3}\varrho'\varrho^{-1}a_0)+\frac{1}{q^4}(a_1'+\frac{L}{3}\varrho'\varrho^{-1}a_1)\right)\gamma'\\
+\bigo{\frac{1}{q^3}}N.
\end{multline}
Hence, $\|\gamma'\|=\varrho^{1/3}$ (cf. Proposition \ref{proposition:derivatives_gamma}) and we obtain
$$
L_j = \frac{2L}{q}\varrho^{1/3}+\frac{\varrho^{1/3}}{q^3}\left(2\left(a_0'+\frac{L}{3}\varrho'\varrho^{-1}a_0\right)-\frac{kL^3}{3}\right)+\frac{\varrho^{1/3}}{q^4}\left(a_1'+\frac{L}{3}\varrho'\varrho^{-1}a_1\right)+\frac{R_q''}{q^{5}},
$$
where $R_q''$ is a functional depending on $\gamma^{(5)}$. By Proposition \ref{proposition:estimates_glancing_orbit}, the following expansion holds:
$$\varrho(t_j)^{-1/3} = \varrho^{-1/3}-\frac{1}{3}\varrho'\varrho^{-4/3}\frac{a_0}{q^2}-\frac{1}{3}\varrho'\varrho^{-4/3}\frac{a_1}{q^3}+\bigo{\frac{1}{q^4}}.$$
Therefore,
$$\varrho(t_j)^{-1/3}L_j = 
\frac{2L}{q}+\frac{1}{3q^3}(6a_0'-kL^3)+\frac{2a_1'}{q^3}+\frac{r_q}{q^{5}},$$ 
where $r_q$ is a functional depending on $\gamma^{(5)}$. The expressions for $c_j$ follow from the previous expansion and estimates on each asymptotic term when $\partial\Omega$ is $\mathscr C^7$-close to an ellipse, using Equation \eqref{equation:factor_ellipse} and the expressions for $a_0$ and $a_1$ given in Proposition \ref{proposition:estimates_glancing_orbit}.
\end{proof}

We can now present the

\begin{proof}[Proof of Proposition \ref{proposition:perturbative_regime}]
Let $\Omega$ be a domain which is close to an ellipse $\mathscr E$ in the $\mathscr C^7$-smooth topology. For any integer $q\geq 3$ and $j\in\{0,\ldots,q-1\}$, the estimates on the $j$-th impact point $t_j$ of a nearly glancing $q$-periodic orbit given by Proposition \ref{proposition:estimates_glancing_orbit} imply that
\begin{equation}
\label{equation:expansion_n}
n(t_j) = n(Lj/q) + n'(Lj/q)\frac{a_0(j/q)}{q^2} + n'(Lj/q)\frac{a_1(j/q)}{q^3} + \frac{\|n\|_{\mathscr C^2}R_{\Omega}(j,q)}{q^4},
\end{equation}
where $R_{\Omega}(j,q)$ is a uniformly bounded remainder which converges to $0$ as $\Omega$ converges to $\mathscr E$ in the $\mathscr C^7$ topology. Multiplying Equation \eqref{equation:estimates_rho_L} with Equation \eqref{equation:expansion_n} gives
\begin{multline}
\label{equation:summand_a_q}
n(t_j)\varrho\left(t_j\right)^{-1/3}L_j =
2k_{\mathscr E}^{-1/2}\sin\left(\frac{2\pi}{q}\right)n(Lj/q)+\frac{c_0}{q}n(Lj/q)\\
+\frac{c_1(Lj/q)}{q^3}n(Lj/q)
+\frac{(4\pi k_{\mathscr E}^{-1/2}+c_0)a(Lj/q)}{q^3}n'(Lj/q)\\
+\frac{c_2(Lj/q)}{q^4}n(Lj/q)
+\frac{(4\pi k_{\mathscr E}^{-1/2}+c_0)a_1(Lj/q)}{q^4}n'(Lj/q)
\\+\frac{\|n\|_{\mathscr C^2}R^{\ast}_{\Omega}(j,q)}{q^5},
\end{multline}
where $R^{\ast}_{\Omega}(j,q)$ is a uniformly bounded remainder which converges uniformly to $0$ as $\Omega$ converges to $\mathscr E$ in the $\mathscr C^7$ topology. 
\\
\\
We now sum over $j$ in the equation \eqref{equation:summand_a_q}. First, note that
Lemmas \ref{lemma:bracket_estimates} and \ref{lemma:norm_product} imply that 
$$\frac{1}{q}\sum_{j=0}^{q-1}c(Lj/q)n(Lj/q) = (cn)_0 + \bigo{\frac{\|n\|_{\gamma}}{q^{\gamma}}}$$
and
$$\frac{1}{q}\sum_{j=0}^{q-1}c(Lj/q)n'(Lj/q) = (cn')_0 + \bigo{\frac{\|n\|_{\gamma}}{q^{\gamma-1}}}$$
for any $L$-periodic map $c$ in the space $H^{\gamma}$, where $(\cdot)_0$ stands for the average of the considered map. Following this remark, we conclude the proof by setting
$$\alpha_0(n) = (c_0n)_0=c_0\widehat n_0,$$
$$\alpha_1(n) = (c_1n+(4\pi k_{\mathscr E}^{-1/2}+c_0)a_0n')_0,$$
and
$$
\alpha_2(n) = (c_2n+(4\pi k_{\mathscr E}^{-1/2}+c_0)a_1n')_0.
$$
\end{proof}

%

\section{Operators acting on Sobolev spaces}
\label{section:operators_appendix}

For any $\gamma>0$, denote by $h^{\gamma}$ the space of sequences $(u_q)_{q\geq 0}$ such that 
$$
q^{\gamma}u_q=\bigo{1}
$$
and define the norm $\|u\|_{\gamma} = \sup\{q^{\gamma}|u_q|,\,\, q\in\NN\}\cup\{|u_0|\}$. Given a continuous $1$-periodic even map $n:\RR\to\RR$, consider its Fourier decomposition 
$$
n=\sum_{j\geq 0}\widehat n_j e_j,
$$
where $e_j$ denotes the $j$th Fourier mode given by $e_j(\theta) = \cos(2\pi j\theta)$ for any $\theta\in\RR$. The space of even $1$-periodic maps $n:\RR\to\RR$ such that 
$$q^{\gamma}\widehat n_q=\bigo{1}$$
will be denoted by $H^{\gamma}$ with the norm $\|\cdot\|_{\gamma}$ defined by
$$\|n\|_{\gamma} = \sup\left\{\{q^{\gamma}|\widehat n_q|,\,\, q\in\NN\}\cup\{|\widehat n_0|\}\right\}.$$ 
Note that  we can extend $\|n\|_{\gamma}$ to $\RR\cup\{+\infty\}$ when the $\sup$ involved in the definition is not finite. In this case, $n\in H^{\gamma}$ if and only if $\|n\|_{\gamma}<+\infty$.
\\
\\
We can define analogous notions for odd maps, by considering their decomposition in the $\sin$ basis. We will sometimes identify $h^{\gamma}$ with $H^{\gamma}$ via the isometry
$$n=\sum_{j\geq 0}\widehat n_je_j\in H^{\gamma}\leftrightarrow (\widehat n_j)_j\in h^{\gamma}.$$
Given a $1$-periodic even map $F:\RR\to\RR$, define for any $q\geq 1$ the quantities
$$[F]_q = \frac{1}{q}\sum_{k=0}^{q-1}F(k/q)
\qquad\text{and}\qquad
[F]_q^{\ast} = [F]_q-\widehat F_0.$$

\begin{remark}
\label{remark:bracket_and_Fourier}
$[F]_q$ and $[F]_q^{\ast}$ are related to the Fourier coefficients of $F$ whose order is divisible by $q$, namely
$$[F]_q = \sum_{m\geq 0} \widehat F_{mq}
\qquad\text{and}\qquad [F]_q^{\ast} = \sum_{m>0} \widehat F_{mq}.$$
\end{remark}

\begin{lemma}
\label{lemma:bracket_estimates}
Assume that $F\in H^\gamma$ with $\gamma>1$. Then we have the following bound on $[F]_q^{\ast}$:
$$
|[F]_q^{\ast}|\leq \frac{\zeta(\gamma)}{q^{\gamma}}\|F\|_{\gamma},
$$
where $\zeta(\gamma)$ is the Riemann $\zeta$ function evaluated at $\gamma$.
\end{lemma}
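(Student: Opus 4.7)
The plan is to exploit the identity recorded in Remark \ref{remark:bracket_and_Fourier}, which expresses $[F]_q^{\ast}$ as the tail of the Fourier series supported on positive multiples of $q$. Once this identification is made, the bound is essentially a direct computation: the triangle inequality reduces the problem to estimating a sum of Fourier coefficients, and the $H^{\gamma}$ hypothesis provides exactly the decay needed to recognize a scaled Riemann $\zeta$ series.

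First, I would write
$$
[F]_q^{\ast} \;=\; \sum_{m\geq 1}\widehat F_{mq},
$$
invoking Remark \ref{remark:bracket_and_Fourier}. Next, I would apply the triangle inequality and use the definition of $\|F\|_{\gamma}$, which gives $|\widehat F_n|\leq \|F\|_{\gamma}/n^{\gamma}$ for every integer $n\geq 1$. Since every index $mq$ with $m\geq 1$ satisfies $mq\geq 1$, this yields
$$
|[F]_q^{\ast}|\;\leq\;\sum_{m\geq 1}|\widehat F_{mq}|\;\leq\;\sum_{m\geq 1}\frac{\|F\|_{\gamma}}{(mq)^{\gamma}}\;=\;\frac{\|F\|_{\gamma}}{q^{\gamma}}\sum_{m\geq 1}\frac{1}{m^{\gamma}}\;=\;\frac{\zeta(\gamma)}{q^{\gamma}}\|F\|_{\gamma},
$$
which is the desired inequality. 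The hypothesis $\gamma>1$ is used precisely at the last step to ensure convergence of $\sum_{m\geq 1}m^{-\gamma}=\zeta(\gamma)$.

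There is no real obstacle: the only subtle point is making sure Remark \ref{remark:bracket_and_Fourier} is justified, which follows by a direct computation using the identity $\frac{1}{q}\sum_{k=0}^{q-1}e^{2\pi i jk/q}=\mathbf{1}_{q\mid j}$ applied to the Fourier expansion of $F$ (and using that $F$ is even so that one may work with $\cos(2\pi j\theta)$ or equivalently with complex exponentials). After that, the argument reduces to term-by-term majorization, and no further analysis is required.
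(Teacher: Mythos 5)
Your proof is correct and follows essentially the same route as the paper: both identify $[F]_q^{\ast}=\sum_{m\geq 1}\widehat F_{mq}$ via the Fourier expansion (Remark \ref{remark:bracket_and_Fourier}), bound each coefficient by $\|F\|_{\gamma}/(mq)^{\gamma}$, and sum the resulting $\zeta(\gamma)$ series. Your additional justification of the remark via the orthogonality identity $\frac{1}{q}\sum_{k=0}^{q-1}e^{2\pi i jk/q}=\mathbf{1}_{q\mid j}$ is a welcome detail the paper leaves implicit.
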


\begin{proof}
The Fourier decomposition of $F$ induces the following expansion
$$[F]_q = \sum_{m\geq 0} \widehat F_{mq} = \widehat F_0+\sum_{m> 0} \widehat F_{mq}.$$
By definition, we have the bound
$$
|\widehat F_{mq}|\leq \frac{1}{(mq)^{\gamma}}\|F\|_{\gamma}.
$$
The estimates follow by summing over $m$ and recalling that $\zeta(\gamma) = \sum_{m \geq 0} m^{-\gamma}.$
\end{proof}

\begin{lemma}
\label{lemma:norm_product}
Let $\gamma>1$ and $\alpha,\beta$ be $1$-periodic maps of the same parity. Then
$$\|\alpha\beta\|_{\gamma}\leq C_{\gamma}\|\alpha\|_{\gamma}\|\beta\|_{\gamma},$$
where $C_{\gamma}=2(\zeta(\gamma)+1)>0$.
\end{lemma}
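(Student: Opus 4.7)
The plan is to bound the cosine Fourier coefficients of $\alpha\beta$ directly by a convolution-type argument. Since $\alpha$ and $\beta$ share parity, $\alpha\beta$ is automatically even and hence admits a decomposition in the cosine basis $(e_q)_{q\geq 0}$, so it suffices to estimate each $\widehat{\alpha\beta}_q$. Using the product-to-sum identity $\cos(2\pi j\theta)\cos(2\pi k\theta) = \frac{1}{2}(\cos(2\pi(j+k)\theta)+\cos(2\pi|j-k|\theta))$ in the even-even case (and the analogous $\sin\sin$ identity in the odd-odd case), and collecting the contributions to the $q$-th mode, one obtains for $q\geq 1$ an expression of the form
\begin{equation*}
\widehat{\alpha\beta}_q = \widehat{\alpha}_0\widehat{\beta}_q+\widehat{\alpha}_q\widehat{\beta}_0+\tfrac{1}{2}\sum_{m=1}^{q-1}\widehat{\alpha}_m\widehat{\beta}_{q-m}+\tfrac{1}{2}\sum_{n\geq 1}\bigl(\widehat{\alpha}_{n+q}\widehat{\beta}_n+\widehat{\alpha}_n\widehat{\beta}_{n+q}\bigr),
\end{equation*}
with the two boundary terms absent in the odd case; separately, $\widehat{\alpha\beta}_0$ is a similar sum involving $\widehat{\alpha}_0\widehat{\beta}_0$ and $\tfrac{1}{2}\sum_{j\geq 1}\widehat{\alpha}_j\widehat{\beta}_j$.

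Next, each piece is bounded using the definition of $\|\cdot\|_\gamma$. The boundary terms satisfy $|\widehat{\alpha}_0\widehat{\beta}_q|,|\widehat{\alpha}_q\widehat{\beta}_0|\leq \|\alpha\|_\gamma\|\beta\|_\gamma/q^\gamma$. For the two infinite tails, one pulls out the factor $1/(n+q)^\gamma\leq 1/q^\gamma$ and bounds the remaining series by $\sum_{n\geq 1}n^{-\gamma}=\zeta(\gamma)$, giving a contribution of order $\zeta(\gamma)\|\alpha\|_\gamma\|\beta\|_\gamma/q^\gamma$. For the central convolution sum $\sum_{m=1}^{q-1}m^{-\gamma}(q-m)^{-\gamma}$, one splits according to whether $m\leq q/2$ or $q-m\leq q/2$: in either case one factor is of order $q^{-\gamma}$ and the other summed by $\zeta(\gamma)$, again yielding order $q^{-\gamma}$. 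The case $q=0$ reduces to bounding $\sum_{j\geq 1}j^{-2\gamma}\leq \zeta(2\gamma)\leq \zeta(\gamma)$ for $\gamma>1$. Multiplying by $q^\gamma$ and taking the supremum then gives an inequality of the desired form $\|\alpha\beta\|_\gamma\leq C_\gamma\|\alpha\|_\gamma\|\beta\|_\gamma$.

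The main technical obstacle is to pin down the specific constant $2(\zeta(\gamma)+1)$ stated in the lemma. The naive splitting estimate for the central convolution sum introduces an additional factor of $2^\gamma$, which exceeds the asserted constant when $\gamma$ is large. To obtain the cleaner form, I would pass to the full-line exponential Fourier coefficients of $\alpha$ and $\beta$ (identifying them with symmetric sequences on $\mathbb{Z}$) and prove a bound of the type $\sup_n (1+|n|)^\gamma\sum_m|a_m||b_{n-m}|\leq 2+2\zeta(\gamma)$, isolating the two dominant contributions $m=0$ and $m=n$ (each of size exactly $1$) and controlling the remaining indices via a weighted $\zeta$-series using the elementary inequality $(1+|m|)(1+|n-m|)\geq 1+|n|$. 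The same approach also settles the $q=0$ case, and the overall constant then consolidates into the stated $2(\zeta(\gamma)+1)$.
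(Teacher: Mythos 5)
Your decomposition of $\widehat{(\alpha\beta)}_q$ into the two boundary terms, the finite convolution $\tfrac12\sum_{m=1}^{q-1}\widehat\alpha_m\widehat\beta_{q-m}$ and the two infinite tails is exactly the computation in the paper (there written as $\tfrac12\sum_{k=0}^{j}\widehat\alpha_k\widehat\beta_{j-k}+\tfrac12\sum_{|k-\ell|=j}\widehat\alpha_k\widehat\beta_\ell$), and your bounds on the boundary terms, the tails and the $q=0$ coefficient are all correct. The ``naive'' splitting of the central sum at $m=q/2$, which gives $\sum_{m=1}^{q-1}m^{-\gamma}(q-m)^{-\gamma}\leq 2^{\gamma+1}\zeta(\gamma)q^{-\gamma}$, is in fact a complete and correct proof of the lemma, with a constant of order $2^{\gamma+1}\zeta(\gamma)$ in place of $2(\zeta(\gamma)+1)$. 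You should stop there.

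The obstacle you flag in your last paragraph cannot be overcome, because the stated constant is not correct. Take $\alpha=\beta=\cos(2\pi\theta)$: then $\|\alpha\|_\gamma=\|\beta\|_\gamma=1$, while $\alpha\beta=\tfrac12+\tfrac12\cos(4\pi\theta)$ gives $\|\alpha\beta\|_\gamma\geq 2^{\gamma}\cdot\tfrac12=2^{\gamma-1}$, which exceeds $2(\zeta(\gamma)+1)$ already at $\gamma=3.5$ (where $2^{2.5}\approx 5.66$ versus $2(\zeta(3.5)+1)\approx 4.25$), hence on most of the range $\gamma\in(3,4)$ used in the paper. Consistently with this, the paper's own justification of $\sum_{k=1}^{j-1}k^{-\gamma}(j-k)^{-\gamma}\leq 2\zeta(\gamma)j^{-\gamma}$ rests on the pointwise inequality $\frac{1}{x^\gamma(1-x)^\gamma}\leq\frac{1}{x^\gamma}+\frac{1}{(1-x)^\gamma}$, which is equivalent to $x^\gamma+(1-x)^\gamma\geq 1$ and therefore fails for every $x\in(0,1)$ once $\gamma>1$; the claimed sum bound itself already fails at $j=2$, where the left-hand side equals $1$. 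Your proposed repair via exponential coefficients does not rescue the constant either: the inequality $(1+|m|)(1+|n-m|)\geq 1+|n|$ is not summable over $m$ on its own, and converting between the weights $p^\gamma$ and $(1+|p|)^\gamma$ reintroduces a factor $2^\gamma$ at $p=1$ -- which is precisely where the counterexample above lives. The honest statement is that the lemma holds with some $C_\gamma=\bigo{2^{\gamma}\zeta(\gamma)}$; since the only property used elsewhere in the paper is that multiplication is bounded on $H^\gamma$ with a constant depending only on $\gamma$, nothing downstream is affected, and your dyadic-splitting argument is the proof to keep.
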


\begin{proof}
Without loss of generality, assume that $\alpha$ and $\beta$ are even. Computing the $j$-th Fourier coefficient of $\alpha\beta$, we find
$$
\widehat{(\alpha\beta)}_j = \frac{1}{2}\sum_{k=0}^j\widehat \alpha_k\widehat \beta_{j-k}+\frac{1}{2}\sum_{|k-\ell|=j}\widehat \alpha_k\widehat \beta_{\ell}.
$$
To conclude, we use the bound
$$
\sum_{k=1}^{j-1}\frac{1}{k^{\gamma}(j-k)^{\gamma}}\leq \frac{2\zeta(\gamma)}{j^{\gamma}},
$$
which comes from the inequality
$$
\frac{1}{x^{\gamma}(1-x)^{\gamma}}\leq \frac{1}{x^{\gamma}}+\frac{1}{(1-x)^{\gamma}}
$$
for any $x\in(0,1)$.
\end{proof}

\begin{lemma}
\label{lemma:link_derivative_sobolev}
Let $r>1$ and $\gamma>r+1$. Assume that $n:\RR\to\RR$ is a $1$-periodic even map such that $n\in H^{\gamma}$. Then $n$ is $\mathscr C^r$-smooth and
$$\|n\|_{\mathscr C^r}\leq (2\pi)^r\zeta(\gamma-r)\|n\|_{\gamma}.$$
\end{lemma}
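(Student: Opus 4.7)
The proof is a standard application of the Weierstrass $M$-test to the formal Fourier series for derivatives of $n$. The plan is as follows.

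Since $n\in H^{\gamma}$ is even and $1$-periodic, write its Fourier expansion
$$
n(\theta) = \sum_{j\geq 0} \widehat n_j \cos(2\pi j\theta).
$$
I would first consider, for each integer $0\leq s\leq r$, the formally differentiated series
$$
N_s(\theta) := \sum_{j\geq 1} \widehat n_j\,(2\pi j)^s\cos\!\left(2\pi j\theta + \tfrac{s\pi}{2}\right),
$$
which is what $n^{(s)}$ ought to be, modulo the constant term at $s=0$. The key term-by-term estimate is
$$
\bigl|\widehat n_j (2\pi j)^s\bigr| \leq (2\pi)^s j^s \cdot \frac{\|n\|_{\gamma}}{j^{\gamma}} = \frac{(2\pi)^s\|n\|_{\gamma}}{j^{\gamma-s}},
$$
using the definition of $\|n\|_{\gamma}$. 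Since $\gamma > r+1 \geq s+1$, the exponent $\gamma-s>1$ and the series $\sum_{j\geq 1} j^{-(\gamma-s)}$ converges to $\zeta(\gamma-s)$, yielding uniform convergence of $N_s$ by the Weierstrass $M$-test.

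Uniform convergence of the partial sums of $N_0,N_1,\dots,N_r$ allows term-by-term differentiation up to order $r$, so $n$ is genuinely $\mathscr{C}^r$-smooth with $n^{(s)}=N_s$ for $1\leq s\leq r$. Summing the $M$-test bounds at order $s=r$ gives
$$
\|n^{(r)}\|_{\infty} \leq \sum_{j\geq 1}\frac{(2\pi)^r\|n\|_{\gamma}}{j^{\gamma-r}} = (2\pi)^r\zeta(\gamma-r)\|n\|_{\gamma}.
$$
Since $\zeta(\gamma-s)\leq \zeta(\gamma-r)$ for $s\leq r$ and the constant term contributes only $|\widehat n_0|\leq \|n\|_{\gamma}\leq (2\pi)^r\zeta(\gamma-r)\|n\|_{\gamma}$ to $\|n\|_{\infty}$, the same bound controls the full $\mathscr{C}^r$ norm.

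There is no real obstacle here; the only subtlety is the bookkeeping between the convention for $\|n\|_{\mathscr{C}^r}$ (sup of all derivatives up to order $r$ versus just the top derivative) and the monotonicity of $\zeta$, which handles the lower-order terms for free since $\zeta(\gamma-s)$ decreases in $s$. The hypothesis $\gamma>r+1$ is used exactly once, to ensure $\gamma-r>1$ so that $\zeta(\gamma-r)<\infty$.
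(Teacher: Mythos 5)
Your proof is correct and follows essentially the same route as the paper: bound $|\widehat n_j|$ by $\|n\|_{\gamma}j^{-\gamma}$, differentiate the Fourier series term by term, and sum to get $(2\pi)^s\zeta(\gamma-s)\|n\|_{\gamma}$ for each order $s\leq r$ (you are in fact slightly more careful than the paper in justifying the term-by-term differentiation via uniform convergence and in treating the $j=0$ term separately). One small slip: $\zeta(\gamma-s)$ \emph{increases} in $s$ (since $\zeta$ is decreasing on $(1,\infty)$ and $\gamma-s$ decreases), but this is exactly what gives your stated, and correct, inequality $\zeta(\gamma-s)\leq\zeta(\gamma-r)$ for $s\leq r$, so the argument is unaffected.
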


\begin{proof}
Expanding $n$ in Fourier modes, we see that for any integer $0\leq k\leq r$,
$$\|n^{(k)}\|_{\mathscr C^0}\leq\sum_{j\geq0}(2\pi j)^k|\widehat n_j|\leq (2\pi)^k \sum_{j\geq 0}j^{k-\gamma}\|n\|_{\gamma}=(2\pi)^k \zeta(\gamma-k)\|n\|_{\gamma}.$$
The result follows by taking the supremum over all possible $k$.
\end{proof}

\begin{proposition}
\label{proposition:invertibility_ellipse_operator}
Let $\gamma>0$ and consider the following operator. For $n\in H^{\gamma}$, let $T(n) = (u_q)_{q\geq 0}$ be the sequence defined by 
\begin{equation}
\label{equation:definition_operator_ellipse}
u_q = 
\left\{\begin{array}{ll}
\widehat n_0 & \text{if } q=0;\\
n(0) & \text{if } q=1;\\
n(1/2) & \text{if } q=2;\\
\mu_q[n]_q^{\ast} & \text{if } q>2,
\end{array}\right.
\end{equation}
where $\mu_q\in\RR^{\ast}$ has a nonzero limit as $q \to \infty$. Then $T$ defines a bounded invertible operator between $H^{\gamma}$ and $h^{\gamma}$.
\end{proposition}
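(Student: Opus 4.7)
The plan is to construct the inverse explicitly via Möbius inversion on Fourier coefficients. For $q \geq 1$, the bracketed quantity $[n]_q^{\ast} = \sum_{m \geq 1} \widehat n_{mq}$ is a divisor sum of the Fourier coefficients, so by Möbius inversion one recovers
$$\widehat n_q = \sum_{m \geq 1} \mu(m)\, [n]_{mq}^{\ast},$$
where $\mu$ is the number-theoretic Möbius function. Thus, if I can recover the full family $([n]_q^{\ast})_{q \geq 1}$ from the sequence $(u_q)_{q \geq 0}$, I can recover $n$ itself (with $\widehat n_0 = u_0$). For $q \geq 3$ this is direct: $[n]_q^{\ast} = u_q/\mu_q$, and the hypothesis that $\mu_q \in \RR^{\ast}$ has a nonzero limit guarantees $\inf_{q \geq 3}|\mu_q|>0$, so this division is well-controlled. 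For $q = 1, 2$, the identities $n(0) = \widehat n_0 + [n]_1^{\ast}$ and $n(1/2) = \widehat n_0 + 2[n]_2^{\ast} - [n]_1^{\ast}$ give a $2\times 2$ linear system with solution $[n]_1^{\ast} = u_1 - u_0$ and $[n]_2^{\ast} = (u_1+u_2)/2 - u_0$.

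Boundedness of $T$ on $H^{\gamma}$ is essentially Lemma \ref{lemma:bracket_estimates}: for $q \geq 3$ one gets $|u_q| \leq |\mu_q|\,\zeta(\gamma)\|n\|_{\gamma}/q^{\gamma}$, and $(\mu_q)$ is bounded since it converges. The values $n(0)$ and $n(1/2)$ needed for $q = 1, 2$ are controlled by $\|n\|_{\gamma}$ via absolute convergence of the Fourier series (for which one implicitly uses $\gamma > 1$, e.g.\ by Lemma \ref{lemma:link_derivative_sobolev}). Boundedness of the inverse is just as clean: with $w_q$ defined by the formulas above, the bound $|w_k| \leq C\|u\|_{\gamma}/k^{\gamma}$ (for $k \geq 3$, with the two low-index values bounded separately) combined with $|\mu(m)| \leq 1$ yields
$$q^{\gamma}|\widehat n_q| \leq q^{\gamma}\sum_{m \geq 1}|w_{mq}| \leq C\,\zeta(\gamma)\|u\|_{\gamma}.$$
Injectivity of $T$ is immediate from the same Möbius inversion: if $T(n)=0$ then all $[n]_q^{\ast}$ vanish for $q \geq 1$, hence all $\widehat n_q$ with $q \geq 1$ vanish, and $\widehat n_0 = u_0 = 0$.

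To verify that the candidate $n$ constructed above satisfies $T(n) = u$, I would swap the order of summation in $[n]_q^{\ast} = \sum_{j \geq 1} \sum_{m \geq 1}\mu(m)\,w_{mjq}$ (justified by absolute convergence) and use the arithmetic identity $\sum_{d \mid k}\mu(d) = [k = 1]$ to collapse it to $w_q$, matching $u_q = \mu_q w_q$ for $q \geq 3$. The same identity recovers $n(0) = u_1$ and $n(1/2) = u_2$ after reversing the $2 \times 2$ linear system. The main conceptual obstacle is the mismatch between the two regimes in the definition of $T$: the cyclic averages $[n]_q^{\ast}$ for $q \geq 3$ versus the point evaluations $n(0), n(1/2)$ at $q = 1, 2$. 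The key observation that makes the proof go through is that these three ``boundary'' entries $u_0, u_1, u_2$ carry precisely the information needed to determine the three quantities $\widehat n_0, [n]_1^{\ast}, [n]_2^{\ast}$ -- and once these are in hand, the Möbius machinery takes care of everything else.
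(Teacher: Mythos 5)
Your proof is correct and follows essentially the same route as the paper's: boundedness via Lemma \ref{lemma:bracket_estimates} and explicit inversion by the M\"obius function, with the entries $q=0,1,2$ treated separately. The only (cosmetic) difference is that you first recover $[n]_1^{\ast}$ and $[n]_2^{\ast}$ from a $2\times 2$ system and then apply the M\"obius formula uniformly for all $q\geq 1$, whereas the paper defines $\widehat n_j$ for $j>2$ directly from the $u_q$'s and then solves for $\widehat n_1,\widehat n_2$ from the two point-evaluation constraints; both implicitly need $\gamma>1$, as you note.
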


\begin{proof}
We first prove that $T$ is bounded. This comes from estimates on the different expressions of $u_q$: if $q=0$, then $|u_0|\leq \|n\|_{\gamma}$ by definition of the $H^\gamma$ norm. Moreover, by expanding $n$ in Fourier modes, one sees that $|1^{\gamma}u_1|=|n(0)|\leq\zeta(\gamma)\|n\|_{\gamma}$. Finally, from Lemma \ref{lemma:bracket_estimates}, we conclude that $q^{\gamma}|u_q|\leq \zeta(\gamma)\|n\|_{\gamma}$ for any $q\geq 2$.
\\
\\
To prove that $T$ is invertible, we first note that $\mu_q=1$ for any $q\geq 3$, which follows from rescaling the operator. Let us now invert the operator. Fix $u = (u_q)_q \in h^{\gamma}$ and consider the so-called \textit{M\"obius function} $\mu:\ZZ^{>0}\to\{-1,0,1\}$, which is defined as follows. Given an integer $k\geq 0$, consider its decomposition into $s$ distinct primes and set
$$
\mu(k) = \left\{\begin{array}{cl}
0 & \text{if }k\text{ has a square in its decomposition}\\
1 & \text{if }k\text{ has a no squares in its decomposition and }s\text{ is even}\\
-1 & \text{if }k\text{ has a no squares in its decomposition and }s\text{ is odd.}
\end{array}\right.$$
It is known to satisfy the following formula
\begin{equation}
\label{equation:mobius}
\sum_{\substack{d|k}}\mu(d) = \delta_{k,1},
\end{equation}
where the sum ranges over all divisors $d>0$ of $k$ and $\delta_{k,1}$ is the Kronecker delta. For $j>2$, define
$$
\widehat n_j = \sum_{\substack{q>0\\j|q}}\mu\left(\frac{q}{j}\right)u_q.
$$
Set $\widehat n_0=u_0$ and $\widehat n_1,\widehat n_2\in\RR$ so that they satisfy
$$
\sum_{j\geq 0}\widehat n_j=u_1
\qquad\text{and}\qquad
\sum_{j\geq 0}(-1)^j\widehat n_j=u_2.
$$
Defining $n$ to be the even map whose Fourier coefficients are given by the sequence of $\widehat n_j$'s, one can easily show that $n$ lies in $H^{\gamma}$, with $\|n\|_{\gamma}\leq \zeta(\gamma)\|u\|_{\gamma}$. Furthermore, Equation \eqref{equation:mobius} together with the choices of $\widehat n_0$, $\widehat n_1$ and $\widehat n_2$ implies that  $T(n)=u$, which concludes the proof.
\end{proof}

\end{document}